\documentclass[a4paper,10pt]{amsart}

\usepackage[T1]{fontenc}

\usepackage[utf8]{inputenc}

\usepackage{lmodern}

\usepackage[english]{babel}

\usepackage{amsmath}

\usepackage{amssymb}

\usepackage{mathtools}

\usepackage{mathrsfs}

\usepackage{bookmark}

\usepackage{hyperref} 
\hypersetup{
	colorlinks=true,
	linkcolor=blue,
	filecolor=magenta,      
	urlcolor=cyan,
}


\usepackage[alphabetic]{amsrefs}

\usepackage{enumitem}
\usepackage{graphicx}
\usepackage{tikz}
\usepackage{tikz-cd}
\usetikzlibrary{matrix,arrows,decorations.pathmorphing}
\usepackage[all,cmtip]{xy}

\setcounter{tocdepth}{1}

\newtheorem*{thm-no-num}{Theorem}
\newtheorem*{df-no-num}{Definition}
\newtheorem{thm}{Theorem} [section]
\newtheorem{prop}[thm]{Proposition} 
\newtheorem{lm}[thm]{Lemma} 
\newtheorem{cor}[thm]{Corollary} 
\theoremstyle{remark}
\newtheorem{rmk}[thm]{Remark}

\theoremstyle{definition}

\newcommand{\bA}{\mathbb{A}}
\newcommand{\PP}{\mathbb{P}}
\newcommand{\ZZ}{\mathbb{Z}}

\newcommand{\OO}{\mathcal{O}}
\newcommand{\cl}[1]{\mathcal{#1}}
\newcommand*{\sheafhom}{\mathcal{H}\kern -.5pt om}
\newcommand{\Brm}{{\rm B}}

\renewcommand{\Brm}{{\rm B}}
\newcommand{\Ccal}{\cl{C}}

\newcommand{\Hcal}{\cl{H}}

\newcommand{\Xcal}{\cl{X}}
\newcommand{\Mcal}{\cl{M}}

\newcommand{\Gm}{\mathbb{G}_m}
\newcommand{\GLt}{\textnormal{GL}_3}
\newcommand{\PGLt}{\textnormal{PGL}_2}

\newcommand{\sm}{_{\rm sm}}

\newcommand{\Het}{{\rm H}^{\bullet}}
\newcommand{\Inv}{{\rm Inv}^{\bullet}}
\newcommand{\Spec}{{\rm Spec}}

\newcommand{\del}{\partial}
\begin{document}
	\title[A complete description of $\Inv(\Hcal_g)$]{A complete description of the cohomological invariants of even genus hyperelliptic curves}
	\author[A. Di Lorenzo]{Andrea Di Lorenzo}
	\address{Aarhus University, Ny Munkegade 118, DK-8000 Aarhus C, Denmark}
	\email{andrea.dilorenzo@math.au.dk}
	\author[R. Pirisi]{Roberto Pirisi}
	\address{KTH Royal Institute of Technology, Lindstedtsvägen 25, 10044 Stockholm, Sweden}
	\email{pirisi@kth.se}	
	\date{\today}
\begin{abstract}
When the genus $g$ is even, we extend the computation of mod $2$ cohomological invariants of $\Hcal_g$ to non algebraically closed fields, we give an explicit functorial description of the invariants and we completely describe their multiplicative structure. 

In the Appendix, we show that the cohomological invariants of the compactification $\overline{\Hcal}_g$ are trivial, and use our methods to give a very short proof of a result by Cornalba on the Picard group of the compactification $\overline{\Hcal}_g$ and extend it to positive characteristic.
\end{abstract}
\maketitle

\section*{Introduction}

Cohomological invariants of algebraic groups are a well-known arithmetic analogue to the theory of characteristic classes for topological groups. The category of topological spaces is replaced with extensions of a base field $k$, and singular cohomology is replaced with Galois cohomology. More precisely, given an algebraic group $G$, write $P_{\Brm G}$ for the functor that associates to a field $K/k$ the set of isomorphism classes of $G$-torsors over $K$, and given a prime number $\ell$ not divisible by the characteristic of our base field, consider the functor 
\[
{\rm H}_{\ell}: \left( \textnormal{Field}/k\right) \rightarrow \left( \textnormal{Set} \right)
\]
\[
K \mapsto\oplus_i\Het_{\ell}(K) \overset{\rm def}{=} \oplus_i {\rm H}_{\textnormal{Gal}}^i(K,\ZZ/\ell\ZZ(i))
\]
where $\ZZ/\ell\ZZ(i)$ denotes the Tate twist $\ZZ/\ell\ZZ \otimes \mu_{\ell}^{\otimes i}$.

\begin{df-no-num}\label{def:coh inv BG}
	A cohomological invariant of $G$ with coefficients in ${\rm H}_{\ell}$ is a natural transformation \[P_{\Brm G} \to {\rm H}_{\ell}\] of functors from fields over $k$ to sets.
\end{df-no-num} 
For brevity's sake, we will often refer to these objects as cohomological invariants modulo $\ell$. The set of cohomological invariants has a natural structure of graded-commutative ring induced by the structure of $\Het_{\ell}(-)$.

The first appearance of cohomological invariants can be traced back to the seminal paper \cite{Wit} and since then they have been extensively studied. The book \cite{GMS} by Garibaldi, Merkurjev and Serre provides a detailed introduction to the modern approach to this theory.

One can think of the cohomological invariants of $G$ as invariants of the classifying stack $\Brm G$ rather than the group $G$. Following this idea, in \cite{PirAlgStack} the second author extended the notion of cohomological invariants to arbitrary smooth algebraic stacks over $k$:
\begin{df-no-num}
	Let $\Xcal$ be an algebraic stack over $k$, and let $P_{\Xcal}: \left( \mathrm{Field}/k \right) \to \left( \mathrm{Set} \right)$ be its functor of points. A cohomological invariant with coefficients in ${\rm H}_{\ell}$ of $\Xcal$ is a natural transformation 
	\[ P_{\Xcal}\longrightarrow {\rm H}_{\ell} \]
	satisfying a certain continuity condition (see \cite{PirAlgStack}*{Def. 2.2}).
\end{df-no-num} 
Note that this definition recovers the classical invariants by taking $\Xcal=\Brm G$. The graded-commutative ring of cohomological invariants of a smooth algebraic stack $\Xcal$ is denoted $\Inv(\Xcal, {\rm H}_{\ell})$. In particular, the ring of classical cohomological invariants mod $\ell$ of a group $G$ is here denoted $\Inv(\Brm G, {\rm H}_{\ell})$ rather than $\Inv(G, \ZZ/\ell\ZZ)$, which is the notation adopted in \cite{GMS}.

By \cite[4.9]{PirAlgStack} the cohomological invariants of a smooth scheme $X$ are equal to its zero-codimensional Chow group with coefficients \[A^0(X,{\rm H}_{\ell}),\] a generalization of ordinary Chow groups introduced by Rost \cite{Rost}. Given a smooth quotient stack $\Xcal = \left[ X/G \right]$ we can construct the equivariant Chow ring with coefficients $A^{*}_G(X,{\rm H}_{\ell})$ following Edidin and Graham's construction \cite{EG} and we have the equality $A^{0}_G(X,{\rm H}_{\ell})=\Inv(\Xcal,{\rm H}_{\ell})$ by \cite[2.10]{PirCohHypEven}. 

Chow groups with coefficients share most of the properties of ordinary Chow groups, and in fact have some stronger ones, such as having a long exact localization sequence rather than a short one.

In \cite{PirAlgStack} the second author also computed the cohomological invariants of $\cl{M}_{1,1}$, the moduli stack of smooth elliptic curves. In the subsequent works \cite{PirCohHypEven} and \cite{PirCohHypThree} he computed the cohomological invariants of $\Hcal_g$, the moduli stack of smooth hyperelliptic curves, when $g$ is even or equal to $3$ and the base field is algebraically closed. The first author then extended the result to arbitrary odd genus \cite{DilCohHypOdd}, using a new presentation of the stack $\Hcal_g$ he developed in \cite{DilChowHyp}. When $\ell$ is odd, the invariants turn out to be (almost) trivial, and moreover the computations work for arbitrary fields. When $\ell=2$ we have nontrivial invariants of degree up to $g+2$. Some relevant questions are still open in the case $\ell=2$:

\begin{itemize}
\item Does the result work for non algebraically closed fields?
\item Can we get an explicit description of the invariants?
\item What is the multiplicative structure of $\Inv(\Hcal_g,{\rm H}_{2})$?
\end{itemize}

This paper answers the three questions above when $g$ is even (see Section \ref{sec:mult}). The main idea is rather simple: given a hyperelliptic curve $C$ over a field $K$, consider the curve's Weierstrass divisor $W_C$, i.e. the ramification divisor of the quotient map $C \rightarrow C/\iota$ given by the hyperelliptic involution. Then $W_C$ is the spectrum of an \'etale algebra of degree $2g+2$ over $K$, which corresponds to a ${\rm S}_{2g+2}$-torsor. 

The resulting map $\Hcal_g \rightarrow \Brm {\rm S}_{2g+2}$ produces an inclusion $\Inv(\Brm {\rm S}_{2g+2}) \subset \Inv(\Hcal_g)$ which yields $\Het_2(k)$-linearly independent invariants $\alpha_0=1, \alpha_1, \ldots, \alpha_{g+1}$, respectively of degree $0, \ldots, g+1$ (see Section \ref{sec:InvWei}). 

These invariants turn out to almost generate $\Inv(\Hcal_g)$: there is only one missing generator, of degree $g+2$, of which we give an explicit description.

Specifically, we can do the following. Assume that $g$ is even. A hyperelliptic curve over $K$ comes equipped with a rational conic $C'=C/\iota$ over $K$, an invertible sheaf $L$ of degree $-g-1$ on $C'$, and a section $s$ of ${\rm H}^0(C',L^{\otimes -2})$. We can (smooth-Nisnevich) locally on $\Hcal_g$ choose a section $s_0$ of $L^{\otimes -2}$. Then the element $t(C) := s/s_0$ can be seen as belonging to ${\rm H}^1(K)=K^*/(K^*)^2$. The product $t \cdot \alpha_{g+1}$ does not depend on the choices we made and provides a new invariant $\beta_{g+2}$.

Another way of seeing the same invariant is that locally we can assume that our section does not pass through a given point $\infty$ of $C'$. Then $s(\infty)$ is well defined up to squares and the product $s(\infty) \cdot \alpha_{g+1}$ can be extended to our last invariant $\beta_{g+2}$. 

This approach works over any field, solving the first two questions. For the last one, the multiplicative structure of $\Inv(\Brm {\rm S}_{2g+2})$ is known, and their products with $\beta_{g+2}$ can be easily obtained from the explicit description we sketched above, completely determining in this way the multiplicative structure of $\Inv(\Hcal_g)$ when $g$ is even (see Theorem \ref{thm:mult}). 

When $g$ is odd we still have an injective map $\Inv(\Brm {\rm S}_{2g+2}) \rightarrow \Inv(\Hcal_g)$, and we can use the same techniques to compute the cohomological invariants up to at most a last generator of degree $g+2$. Unfortunately the approach used in this paper to produce the last generator fails for $g$ odd. Nevertheless, we can still say something about the $g$ odd case assuming that the base field is algebraically closed, and we also outline a possible approach to the problem for constructing the last invariant over general fields.

In Appendix \ref{app:InvHbar}, we show that the cohomological invariants of the compactification $\overline{\Hcal}_g$ are trivial. In Appendix \ref{app:PicHbar}, we use a similar equivariant argument to produce a short proof of a result of Cornalba that the Picard group of $\overline{\Hcal}_g$, the stack of stable hyperelliptic curves, is torsion free over a field of characteristic zero. Moreover we extend the result to any field of characteristic different from two.

\subsection*{Acknowledgements}
The main idea of this paper came to the second author thanks to a discussion with Emiliano Ambrosi during the conference "Cohomology of algebraic varieties" at CIRM, Marseille. We are grateful to both him and the organizers of the conference. We thank Angelo Vistoli for useful comments on Appendix \ref{app:PicHbar}. We also thank the anonymous referees, whose comments much improved the quality of the exposition.

\subsection*{Notation} We fix a base field $k$ of characteristic different from $2$. Every scheme and stack is assumed to be of finite type over $\Spec(k)$. By $\ell$ we will always denote a prime number not divisible by the characteristic of $k$.

Given a scheme $X$, with the notation $\Het_{\ell}(X)$ we will always mean the graded-commutative ring $\oplus_i {\rm H}^i_{\'{e}t}(X,\ZZ/\ell\ZZ(i))$. Sometimes, we will write $\Het_{\ell}(R)$ , where $R$ is a finitely generated $k$-algebra, to denote $\Het_{\ell}(\Spec(R))$.

When $\ell=2$, we will shorten $\Inv(X, {\rm H}_{2})$ to $\Inv(X)$ and $A^{*}(X,{\rm H}_{2})$ to $A^{*}(X)$.

\section{Preliminaries}

\subsection{The moduli stacks of hyperelliptic curves}\label{sec:pres Hg}

We begin by recalling the presentation of $\Hcal_g$ by Arsie and Vistoli \cite{ArsVis}. Let $n$ be an even positive integer, and consider the affine space $\bA^{n+1}$, seen as the space of binary forms of degree $n$. There are two different natural actions on this space. 

\begin{itemize}
\item An action of ${\rm GL}_2$ given by \[A\cdot f(x_0,x_1) = {\rm det}(A)^{n/2-1}f(A^{-1}(x_0,x_1)).\]
\item An action of ${\rm PGL}_2 \times \Gm$ given by \[(\left[A\right],t)\cdot f(x_0,x_1)={\rm det}(A)^{n/2}t^{-2}(f(A^{-1}(x_0,x_1))).\]
\end{itemize}

The subset of square-free forms inside $\bA^{n+1}$ is open and invariant with respect to both actions. We will denote it by $\bA^{n+1}_{\rm sm}$.

\begin{thm}[\cite{ArsVis}*{Cor. 4.7}]\label{thm:presentation}
When $g\geq 2$ is even, we have an isomorphism $\left[\bA^{2g+3}_{\rm sm}/{\rm GL}_2\right] \simeq \Hcal_g$.

When $g\geq 3$ is odd, we have an isomorphism $\left[\bA^{2g+3}_{\rm sm}/{\rm PGL}_2\times \Gm\right] \simeq \Hcal_g$.
\end{thm}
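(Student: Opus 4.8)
The plan is to reconstruct the Arsie--Vistoli equivalence by identifying a smooth hyperelliptic curve with the data of a conic equipped with a squarefree branch divisor, and then tracking how the automorphisms of this data assemble into the two groups. First I would start from a family $p\colon \Ccal \to S$ of smooth hyperelliptic curves of genus $g$. Since $g\geq 2$, the hyperelliptic involution $\iota$ is canonical, so I can form the quotient $P = \Ccal/\iota \to S$, a smooth family of genus zero curves (a Brauer--Severi scheme of relative dimension one), together with the degree-two quotient map $\pi\colon \Ccal \to P$. As the fibres are smooth, the branch locus $D \subset P$ is a relative Cartier divisor, \'etale over $S$, of relative degree $2g+2$. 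This produces a morphism from $\Hcal_g$ to the stack of conics endowed with such a divisor.

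Next I would invert this construction using the standard theory of double covers in characteristic $\neq 2$. Decomposing $\pi_*\OO_{\Ccal} = \OO_P \oplus \Lcal^{-1}$ into $\iota$-eigensheaves, the algebra structure is encoded by the multiplication map $\Lcal^{-2} \to \OO_P$, that is, by a section $s \in {\rm H}^0(P, \Lcal^{\otimes 2})$ cutting out $D$; smoothness of $\Ccal$ is equivalent to $s$ being squarefree, i.e. $D$ reduced. A Riemann--Hurwitz computation on fibres forces $\deg \Lcal = g+1$. Thus $\Hcal_g$ becomes the stack of triples $(P, \Lcal, s)$ with $\Lcal^{\otimes 2} \cong \OO_P(D)$, $\deg \Lcal = g+1$ and $s$ squarefree, and it remains to present this stack as a quotient.

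The decisive step is the parity analysis of $\deg \Lcal = g+1$. When $g$ is even, $g+1$ is odd, so the existence of the odd-degree line bundle $\Lcal$ on $P$ trivialises the Brauer class and exhibits $P \cong \PP(E)$ for a rank two bundle $E$ on $S$; after normalising $\Lcal$ against a power of $\OO_{\PP(E)}(1)$, the section $s$ becomes a binary form of degree $2g+2$, and the residual automorphisms---change of basis of $E$ together with the rescalings of $\Lcal$ compatible with $\Lcal^{\otimes 2}\cong\OO_P(D)$---collapse into the single group ${\rm GL}_2$ acting by $A\cdot f=\det(A)^{g}f(A^{-1}(x_0,x_1))$. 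When $g$ is odd, $g+1$ is even, so $\Lcal$ is forced to be a power of the relative anticanonical bundle and cannot split the conic; the conic then contributes ${\rm PGL}_2$, while the independent choice of square root $\Lcal$ of $\OO_P(D)$ (equivalently the scaling of $s$) contributes an extra $\Gm$, yielding ${\rm PGL}_2 \times \Gm$ with the stated action. In both cases I would finish by exhibiting explicit mutually quasi-inverse functors and checking that the squarefree locus $\bA^{2g+3}_{\rm sm}$ matches the smooth covers.

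I expect the main obstacle to be precisely this matching of automorphisms in the parity step: verifying that the combined ambiguity in choosing the square root $\Lcal$ and in trivialising the conic bundle reproduces exactly the advertised groups and, in the even case, the precise determinant weight $\det(A)^{n/2-1}$ appearing in the action. This is a careful bookkeeping of torsors and line-bundle twists rather than a conceptual difficulty, but it is where all the arithmetic of the even/odd dichotomy is concentrated.
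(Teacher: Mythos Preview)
The paper does not prove this statement: Theorem~\ref{thm:presentation} is quoted verbatim from \cite{ArsVis}*{Cor.~4.7} and used as a black box throughout, so there is no proof here to compare your proposal against.

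That said, your outline is essentially the Arsie--Vistoli strategy: pass from a hyperelliptic curve to the triple $(P,\Lcal,s)$ encoding the double cover, and then use the parity of $\deg\Lcal=g+1$ to decide whether the conic $P$ is forced to be a Zariski-locally trivial $\PP^1$-bundle. Your even-genus branch is correct, including the determinant weight (indeed $n/2-1=g$ for $n=2g+2$). Your odd-genus branch is the right idea but imprecisely phrased: it is not that $\Lcal$ ``is forced to be a power of the relative anticanonical bundle'' (this need not hold globally over $S$), nor that the $\Gm$ records ``the independent choice of square root $\Lcal$ of $\OO_P(D)$''. Rather, when $g+1$ is even one may normalise so that $\Lcal\cong\omega_{P/S}^{-(g+1)/2}$ fibrewise, and what remains is the pair (conic, section of a fixed even-degree bundle up to scalar); the residual automorphisms of the conic give ${\rm PGL}_2$, and the scalar ambiguity in $s$ (with weight~$2$, since $s$ lives in the square $\Lcal^{\otimes 2}$) gives the $\Gm$ acting by $t^{-2}$. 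Tightening this bookkeeping is exactly the ``main obstacle'' you already flagged, and it is handled carefully in \cite{ArsVis}.
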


When no confusion is possible, we will write $G$ for either ${\rm GL}_2$ or ${\rm PGL}_2\times \Gm$. Our computation will be for the most part done on the projectivizations \[\PP^n = \left( \bA^{n+1}\smallsetminus\{0\} \right)/\Gm\] equipped with the induced action of $G$. 

There is a $G$-invariant stratification on $\PP^n$. Let $\Delta_{i}^n \subset \PP^n$ be the closed subscheme of $\PP^n$ given by forms of degree $n$ divisible by the square of a form of degree at least $i$, with the reduced subscheme structure. Then 
\[ \PP^n \supset \Delta^n_1 \supset \ldots \supset \Delta_{n/2}^n \]
is a $G$-invariant stratification of $\PP^n$. We set $\PP^n_{\rm sm}:= \PP^n\smallsetminus \Delta_1^n$.

There is a natural equivariant map $\PP^{n-2r} \times \PP^r \rightarrow \Delta^{n}_{r}$ given by $(f,g) \rightarrow fg^2$. If we restrict the map to $\PP^{n-2r}_{\rm sm} \times \PP^r$ then the image is exactly $\Delta_{r}^n \setminus \Delta_{r+1}^n$.

\begin{prop}[\cite{PirCohHypEven}*{Prop. 3.3}]\label{prop:homeo}
The map $\PP^{n-2r}_{\rm sm} \times \PP^r \rightarrow \Delta^n_{r}\setminus \Delta^n_{r+1}$ is an equivariant universal homeomorphism.
\end{prop}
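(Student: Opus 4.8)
The plan is to verify the standard characterisation of universal homeomorphisms: a morphism is a universal homeomorphism if and only if it is integral, radicial (i.e.\ universally injective) and surjective. Write $\phi\colon \PP^{n-2r}_{\rm sm}\times\PP^r \to \Delta^n_r\setminus\Delta^n_{r+1}$ for the map $(f,g)\mapsto fg^2$, and let $\overline{\phi}\colon \PP^{n-2r}\times\PP^r\to\PP^n$ be its extension to the whole product. Equivariance is immediate and can be dealt with first: on the projectivisations the determinant and $\Gm$ twists are absorbed, so the $G$-action on each factor is induced by the substitution $f\mapsto f\circ A^{-1}$, which is multiplicative; hence $(f\circ A^{-1})(g\circ A^{-1})^2=(fg^2)\circ A^{-1}$ shows that $\overline{\phi}$, and thus $\phi$, is $G$-equivariant. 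It therefore suffices to treat $\phi$ as a morphism of schemes and to check the three topological conditions.

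First I would show that $\phi$ induces a bijection on $K$-points for every field $K\supseteq k$, which yields surjectivity and radicialness simultaneously. Since $K[x_0,x_1]$ is a UFD, a form $h$ of degree $n$ factors as $h=\prod_p p^{m_p}$ over pairwise non-associate irreducibles $p$ over $K$, and any expression $h=fg^2$ with $f$ square-free forces the multiplicity of $p$ in $f$ to be $m_p\bmod 2\in\{0,1\}$ and that in $g$ to be $\lfloor m_p/2\rfloor$. Thus $f=\prod_p p^{\,m_p\bmod 2}$ and $g=\prod_p p^{\lfloor m_p/2\rfloor}$ are uniquely determined and defined over $K$ (here $\operatorname{char}k\neq 2$ is used, so that $g$ is recovered from $g^2$ up to the sign absorbed in $\PP^r$). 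The largest square dividing $h$ has square root of degree $\sum_p\lfloor m_p/2\rfloor\deg p$, so $h\in\Delta^n_r\setminus\Delta^n_{r+1}$ exactly when this degree equals $r$, equivalently when $\deg f=n-2r$ and $f$ is square-free; this describes precisely the image of $\phi$, giving surjectivity, while uniqueness of $(f,g)$ gives injectivity on $K$-points for all $K$, hence universal injectivity.

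Second I would prove $\phi$ is finite by showing it is proper and quasi-finite. The crucial computation identifies the preimage $\overline{\phi}^{-1}(\Delta^n_r\setminus\Delta^n_{r+1})$. For arbitrary $(f,g)$ with $f=\prod_p p^{e_p}$ and $\deg g=r$, the square root of the largest square dividing $fg^2=\prod_p p^{e_p+2a_p}$ has degree $\sum_p(a_p+\lfloor e_p/2\rfloor)\deg p=r+\sum_p\lfloor e_p/2\rfloor\deg p$, which exceeds $r$ if and only if some $e_p\geq 2$, i.e.\ if and only if $f$ is not square-free. Hence $fg^2\in\Delta^n_{r+1}$ iff $f\notin\PP^{n-2r}_{\rm sm}$, so that $\overline{\phi}^{-1}(\Delta^n_r\setminus\Delta^n_{r+1})=\PP^{n-2r}_{\rm sm}\times\PP^r$. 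Since $\overline{\phi}$ is proper (its source is projective), its base change $\phi$ along the locally closed immersion $\Delta^n_r\setminus\Delta^n_{r+1}\hookrightarrow\PP^n$ is proper; being quasi-finite by the fibre analysis of the previous step, it is finite, in particular integral.

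Combining the two steps, $\phi$ is integral, radicial and surjective, hence a $G$-equivariant universal homeomorphism. The step I expect to be the main obstacle is the identification $\overline{\phi}^{-1}(\Delta^n_r\setminus\Delta^n_{r+1})=\PP^{n-2r}_{\rm sm}\times\PP^r$, since this is what upgrades the cheap pointwise bijection to an \emph{integral} morphism and at the same time pins down the locally closed stratum as the image. It is worth emphasising that $\phi$ is genuinely not an isomorphism: the squaring in the second factor is ramified, so the scheme structures differ, and a universal homeomorphism is the best one can hope for. This is nonetheless exactly what is needed downstream, since a universal homeomorphism induces an isomorphism on Chow groups with coefficients.
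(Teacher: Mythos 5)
The paper itself gives no proof of this proposition; it is imported directly from \cite{PirCohHypEven}*{Prop. 3.3}, so there is no internal argument to compare against. Your overall architecture --- universal homeomorphism $=$ integral $+$ surjective $+$ radicial, with integrality deduced from properness of the base change of $\overline{\phi}$ along $\Delta^n_r\setminus\Delta^n_{r+1}\hookrightarrow\PP^n$ together with quasi-finiteness --- is the standard route and is essentially sound; in particular the identification $\overline{\phi}^{-1}(\Delta^n_r\setminus\Delta^n_{r+1})=\PP^{n-2r}_{\rm sm}\times\PP^r$, which you rightly single out as the crux, is correct (and, since this preimage is supported on an open subset of a reduced scheme, it automatically carries the reduced structure, so the base-change argument is legitimate at the level of schemes).

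However, one intermediate claim is false as stated: the map is \emph{not} bijective on $K$-points for every field $K\supseteq k$. Membership in $\Delta^n_r$ is a geometric condition, and over an imperfect field of odd characteristic the $K$-rational decomposition $f=\prod_p p^{\,m_p\bmod 2}$, $g=\prod_p p^{\lfloor m_p/2\rfloor}$ can differ from the geometric one, because an irreducible inseparable factor occurring to an odd power is square-free over $K$ but not over $\overline{K}$. Concretely, take $K=\FF_3(t)$, $n=6$ and $h=(x_0^3-tx_1^3)x_1^3=(x_0-t^{1/3}x_1)^3x_1^3$: this is a $K$-point of $\Delta^6_2\setminus\Delta^6_3$, its unique geometric preimage is $\bigl((x_0-t^{1/3}x_1)x_1,\ (x_0-t^{1/3}x_1)x_1\bigr)$, which is defined only over $K(t^{1/3})$, while your recipe outputs $f=(x_0^3-tx_1^3)x_1$ of degree $4$ --- not geometrically square-free and placing $h$ in the wrong stratum. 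This does not sink the proof: scheme-theoretic surjectivity only requires surjectivity on $\overline{K}$-points, where your factorization into linear forms is valid, and radicial only requires injectivity on $K$-points, which your UFD argument does establish (a geometrically square-free $f$ is square-free over $K$, so all multiplicities are forced). You should therefore split the point count into these two separate checks instead of asserting a bijection on $K$-points for all $K$. Relatedly, the parenthetical locating the use of ${\rm char}\,k\neq 2$ in recovering $g$ from $g^2$ up to sign is a red herring (in characteristic $2$ square roots are unique); the real use is that inseparability degrees are odd, so parities of multiplicities survive base change to $\overline{K}$.
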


\subsection{Cohomological invariants and Chow groups with coefficients}

In this Subsection we describe some properties of cohomological invariants and (equivariant) Chow groups with coefficients which will be needed throughout the paper.

First, given a morphism $f:\Xcal \to \mathcal{Y}$, we have a pullback
\[
f^*:\Inv(\Xcal,{\rm H}_{\ell}) \to \Inv(\mathcal{Y},{\rm H}_{\ell})
\]
given by $f^*(\alpha)(p)=\alpha(f(p))$. Note that given an algebraic stack $\Xcal$ over $k$, the pullback
\[
\Het_{\ell}(k)=\Inv({\rm Spec}(k),{\rm H}_{\ell}) \to \Inv(\Xcal,{\rm H}_{\ell})
\]
induces a $\Het_{\ell}(k)$-module structure on $\Inv(\Xcal,{\rm H}_{\ell})$. This map need not be injective, but it is so if $\Xcal$ has a rational point, which will always be true for the stacks we consider in this paper. Throughout the paper, we will see our rings of cohomological invariants as $\Het_{\ell}(k)$-algebras and the presentations we give will always be as $\Het_{\ell}(k)$-modules.

A smooth-Nisnevich morphism \cite{PirAlgStack}*{Def. 3.2} is a representable smooth morphism $f:\mathcal{Y} \to \Xcal$ of algebraic stacks such that for any point $p:\Spec(F) \to \Xcal$ we have a lifting
\[
\xymatrix{  & \mathcal{Y} \ar[d]^f \\
        \Spec(F) \ar[ur]^{p'} \ar[r]^{p} & \Xcal  }
\]

Using the definition of pullback one can easily conclude that the pullback through a smooth-Nisnevich morphism is injective. In fact, much more is true:

\begin{thm}[\cite{PirAlgStack}*{Thm. 3.8}]\label{thm:sheaf}
The functor $\Inv(-,{\rm H}_{\ell})$ is a smooth-Nisnevich sheaf.
\end{thm}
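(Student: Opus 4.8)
The plan is to verify the two conditions that together characterize a sheaf for a presheaf of this shape: \emph{separatedness} (restriction to a cover is injective) and \emph{gluing} (a section on a cover that agrees on the overlap descends uniquely). Since the smooth-Nisnevich topology is generated by single representable surjective smooth morphisms admitting liftings of field-valued points, it suffices to treat one such $f\colon\mathcal{Y}\to\Xcal$ together with its \v{C}ech nerve, and to show that
\[
\Inv(\Xcal,{\rm H}_{\ell})\longrightarrow\Inv(\mathcal{Y},{\rm H}_{\ell})\rightrightarrows\Inv(\mathcal{Y}\times_{\Xcal}\mathcal{Y},{\rm H}_{\ell})
\]
is an equalizer. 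The one feature of $f$ I will use repeatedly is its defining property: every $p\colon\Spec(F)\to\Xcal$ lifts to some $p'\colon\Spec(F)\to\mathcal{Y}$ with $f\circ p'=p$.

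For separatedness, suppose $\alpha\in\Inv(\Xcal,{\rm H}_{\ell})$ satisfies $f^*\alpha=0$. Given any $p\colon\Spec(F)\to\Xcal$, choose a lift $p'$; then by the definition of pullback $\alpha(p)=\alpha(f\circ p')=(f^*\alpha)(p')=0$. As $p$ was arbitrary, $\alpha=0$, so $f^*$ is injective.

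For gluing, let $\beta\in\Inv(\mathcal{Y},{\rm H}_{\ell})$ satisfy $\pr_1^*\beta=\pr_2^*\beta$ in $\Inv(\mathcal{Y}\times_{\Xcal}\mathcal{Y},{\rm H}_{\ell})$. I define a candidate $\alpha$ on field points by $\alpha(p):=\beta(p')$ for a chosen lift $p'$ of $p$. Well-definedness is where the cocycle condition enters: two lifts $p'_1,p'_2$ of the same $p$ assemble, by the universal property of the fibre product, into a point $q\colon\Spec(F)\to\mathcal{Y}\times_{\Xcal}\mathcal{Y}$ with $\pr_i\circ q=p'_i$, whence
\[
\beta(p'_1)=(\pr_1^*\beta)(q)=(\pr_2^*\beta)(q)=\beta(p'_2).
\]
Naturality of $\alpha$ in $F$ follows from that of $\beta$, since the restriction of a lift of $p$ along a field extension is a lift of the restricted point. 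Finally $f^*\alpha=\beta$, because any $q\colon\Spec(F)\to\mathcal{Y}$ is itself a lift of $f\circ q$, so $(f^*\alpha)(q)=\alpha(f\circ q)=\beta(q)$.

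The step I expect to be the main obstacle is verifying that the glued $\alpha$ is a bona fide cohomological invariant, i.e.\ that it satisfies the continuity condition of \cite{PirAlgStack}*{Def.~2.2} and not merely naive naturality on field points. This is precisely where the \emph{smoothness} of $f$ is needed, beyond the Nisnevich lifting of field points used above. The continuity condition is phrased through compatibilities among the values of an invariant at points lying over a common local base, such as the generic and closed points of a henselian discrete valuation ring mapping to $\Xcal$. To deduce such a compatibility for $\alpha$ from the one for $\beta$, one must lift the local test datum $\Spec(R)\to\Xcal$ through $f$; formal smoothness of $f$ together with the Nisnevich lifting of the closed point supplies this lift via Hensel's lemma. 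Continuity of $\alpha$ then follows from that of $\beta$, with the same fibre-product comparison ensuring independence of the chosen lift.
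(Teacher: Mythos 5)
This statement is not proved in the paper at all: it is imported verbatim from \cite{PirAlgStack}*{Thm.~3.8}, so there is no in-paper argument to compare against. Your proposal is nonetheless essentially the strategy of the proof in that reference: separatedness via the lifting of field-valued points, gluing via a choice of lift with well-definedness coming from the fibre-product/cocycle comparison, and descent of the continuity condition via lifting of henselian local test data. The one place where your write-up is a sketch rather than a proof is exactly the step you flag: you must actually produce the lift of $\Spec(R)\to\Xcal$ for $R$ henselian, which works because $\mathcal{Y}\times_{\Xcal}\Spec(R)$ is a smooth algebraic space over $\Spec(R)$ with a section over the closed point (supplied by the Nisnevich lifting), and such a section extends over a henselian base; you should also note that the resulting compatibility for $\alpha$ is independent of the chosen lift of the $R$-point by the same fibre-product comparison applied over $R$, not just over fields. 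With that filled in, the argument is complete and agrees with the cited proof.
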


Let $G$ be a special smooth algebraic group, in the sense that exery $G$-torsor over a scheme is Zariski-locally trivial.
Then the quotient map $X \to \left[ X/G\right]$ is an example of a smooth-Nisnevich morphism. In particular, this includes the presentation $\bA^{2g+3}_{\rm sm} \to \left[ \bA^{2g+3}_{\rm sm}/{\rm GL}_2 \right]=\Hcal_g$ for even $g$, but not $\bA^{2g+3}_{\rm sm} \to \left[ \bA^{2g+3}/{\rm PGL}_2\times \Gm \right]=\Hcal_g$ for odd $g$, and the latter is indeed not a smooth-Nisnevich morphism, and we will see later (Remark \ref{rmk: non-sN})  that the pullback on cohomological invariants is not injective.

As the following Proposition shows, cohomological invariants are determined by their ``generic'' value:

\begin{prop}\label{prop:generic}
Let $\Xcal$ be a smooth, connected algebraic stack, and let $\mathcal{U} \xrightarrow{j} \Xcal$ be an open immersion.  Then the pullback $j^*:\Inv(\Xcal,{\rm H}_{\ell}) \to \Inv(\mathcal{U},{\rm H}_{\ell})$ is injective. 

Moreover, let $X$ be a smooth, irreducible scheme of finite type over $k$ with generic point $\xi$, and let $\alpha \in \Inv(X,{\rm H}_{\ell})$. Then
\[ \alpha = 0 \Leftrightarrow \alpha(\xi)=0. \]
\end{prop}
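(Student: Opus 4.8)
The plan is to deduce both statements from the identification of cohomological invariants with codimension-zero Chow groups with coefficients, together with two structural inputs of Rost's theory: the description of $A^0$ of an integral scheme as a subgroup of the cycle module at the generic point, and the behaviour of these groups under the Edidin--Graham approximation. By \cite[4.9]{PirAlgStack} one has $\Inv(X,{\rm H}_\ell)=A^0(X,{\rm H}_\ell)$ for a smooth scheme $X$, and by \cite[2.10]{PirCohHypEven} one has $\Inv(\Xcal,{\rm H}_\ell)=A^0_G(Y,{\rm H}_\ell)$ when $\Xcal=[Y/G]$; every stack we consider is of this form.

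I would establish the ``moreover'' part first, as it drives the rest. Let $X$ be smooth and irreducible of dimension $d$ with generic point $\xi$. Since $\xi$ is the unique point of codimension $0$, Rost's cycle complex exhibits $A^0(X,{\rm H}_\ell)$ as
\[ A^0(X,{\rm H}_\ell)=\ker\Big(\Het_\ell(k(X))\xrightarrow{\ \partial\ }\bigoplus_{x\in X^{(1)}}\Het_\ell(k(x))\Big), \]
that is, as a subgroup of $\Het_\ell(k(X))$. Under the identification $\Inv(X,{\rm H}_\ell)=A^0(X,{\rm H}_\ell)$ this inclusion is exactly the evaluation $\alpha\mapsto\alpha(\xi)$. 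The implication $\alpha=0\Rightarrow\alpha(\xi)=0$ is immediate, and the converse is precisely the injectivity of the inclusion, which proves the claim.

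For the first assertion I would reduce to the ``moreover'' part. A smooth connected stack is irreducible, so a nonempty open substack $\mathcal{U}$ is dense and shares the generic point $\xi$ with $\Xcal$. If $\Xcal$ is a scheme this finishes things at once: $j^*\alpha=0$ forces $\alpha(\xi)=(j^*\alpha)(\xi)=0$, hence $\alpha=0$. For a general $\Xcal=[Y/G]$, I would pass to an Edidin--Graham approximation: for a representation $V$ of $G$ with an open subscheme $U'\subseteq V$ on which $G$ acts freely and whose complement has positive codimension, one has $\Inv(\Xcal,{\rm H}_\ell)=A^0_G(Y,{\rm H}_\ell)=A^0\big((Y\times U')/G,{\rm H}_\ell\big)$, and likewise for $\mathcal{U}$. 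The open immersion $\mathcal{U}\hookrightarrow\Xcal$ then becomes an open immersion of smooth irreducible schemes with dense image, so the scheme case already proved gives the injectivity of $j^*$.

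The hard part is really the passage from schemes to stacks. One must take care that a general smooth surjective atlas of $\Xcal$ need not be smooth-Nisnevich---as the excerpt already notes for $\PGLt\times\Gm$---so the naive reduction through Theorem \ref{thm:sheaf} is unavailable, and it is cleaner to argue through the approximation (equivalently, through the equivariant localization sequence, whose term supported on the complement $\mathcal{Z}$ vanishes in codimension zero). The remaining verifications---that smoothness and connectedness force irreducibility so that $\mathcal{U}$ is dense, and that the approximation is smooth, irreducible, and computes $A^0$ in the stable range (a scheme for the special groups relevant here, an algebraic space in general, to which the identical generic-point argument applies)---are routine.
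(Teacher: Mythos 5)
Your treatment of the ``moreover'' part is essentially a proof of \cite{PirAlgStack}*{Cor. 4.7}, which the paper simply cites: under $\Inv(X,{\rm H}_{\ell})=A^0(X,{\rm H}_{\ell})$ an invariant is identified with its value at the generic point, and $A^0$ is by construction a subgroup of $\Het_{\ell}(k(X))$. For the first part you take a genuinely different route. The paper does not pass to a quotient presentation: it invokes \cite{PirAlgStack}*{Prop. 3.6}, which says that \emph{every} algebraic stack admits a smooth-Nisnevich covering $X\to\Xcal$ by a scheme; pulling $\mathcal{U}$ back gives a dense open of $X$ on which $\alpha$ vanishes, the scheme case forces $\alpha|_X=0$, and Theorem \ref{thm:sheaf} forces $\alpha=0$. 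Your objection that ``the naive reduction through Theorem \ref{thm:sheaf} is unavailable'' because a general atlas need not be smooth-Nisnevich is therefore misplaced: one does not use a general atlas, one uses the one Prop. 3.6 provides. The concrete cost of your route through Edidin--Graham approximations is that it proves the proposition only for quotient stacks $[Y/G]$, while the statement (and the paper's argument) covers arbitrary smooth connected algebraic stacks; this is harmless for the stacks in this paper but is a real restriction of scope. Two smaller corrections: the complement of $U'$ in $V$ must have codimension at least $2$ (not merely positive codimension) for the approximation to compute $A^0$, and you should observe that $(Y\times U')/G$ is irreducible because connectedness of $[Y/G]$ forces $G$ to permute the components of $Y$ transitively, so that the scheme-level generic-point argument applies.
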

\begin{proof}
The second statement is \cite{PirAlgStack}*{Cor. 4.7}. We want to show that it implies the first. This is immediate if $\Xcal$ is a scheme.

In general, assume that $j^*\alpha=0$. Let $X \to \Xcal$ be a smooth-Nisnevich covering by a scheme. These always exist thanks to \cite{PirAlgStack}*{Prop. 3.6}. We can take the pullback of $\mathcal{U}$ along $X\to\Xcal$, obtaining an everywhere dense subset $U \subset X$ on which $\alpha$ is zero. This implies that $\alpha$ is zero when restricted to $X$, which in turn shows that $\alpha =0$.
\end{proof}

As mentioned in the Introduction, for a smooth scheme $X$, by \cite{PirAlgStack}*{Prop. 4.9} we have 
\[
\Inv(X,{\rm H}_{\ell})=A^0(X,{\rm H}_{\ell})
\]
where the group on the right hand side is the zero-codimensional equivariant Chow group with coefficients \cite{Rost}. 

If $X$ is an equidimensional scheme over $k$ its $i$-th codimensional Chow group with coefficients $A^i(X,{\rm H}_{\ell})$ is the $i$-th homology group of the complex
\[
0 \to C^0(X,{\rm H}_{\ell}) \to C^1(X,{\rm H}_{\ell}) \to \ldots \to C^{{\rm dim} X}(X,{\rm H}_{\ell}) \to 0
\]
The group $C^i(X,{\rm H}_{\ell})$ is generated by elements $(Z,a_Z)$, where $Z$ is an irreducible subvariety of codimension $i$ and $a_Z \in \Het_{\ell}(k(Z))$. The differential decreases the cohomological degree by one. In particular, $A^0(X,{\rm H}_{\ell})$ is the subset of unramified elements of $\Het_{\ell}(k(X))$.

Chow groups with coefficients have all the same properties of ordinary Chow groups:
\begin{itemize}
    \item Pullback $f^*$ for maps that are flat or whose target is smooth.
    \item Pushforward $f_*$ for proper maps.
    \item Projective bundle formula.
    \item Chern classes.
    \item Ring structure for smooth schemes.
\end{itemize}

The pullback on $A^0(X,{\rm H}_{\ell})$ is compatible with the pullback on cohomological invariants. Moreover, given an irreducible subvariety $V\subset X$ of codimension $r$ there is a localization long exact sequence 
\[
\ldots \to A^i(X,{\rm H}_{\ell}) \to A^i(U,{\rm H}_{\ell}) \xrightarrow{\partial} A^{i+1-r}(V,{\rm H}_{\ell}) \to A^{i+1}(X,{\rm H}_{\ell}) \to \ldots
\]
where the boundary map $\partial$ lowers the cohomological degree by $1$, and it is compatible with Chern classes, pullbacks and pushforwards. 

There is an additional property of the boundary map $\partial$ that will be crucial to our results:
\begin{lm}\label{lm:deriv}
Assume $X,V$ as above are both smooth, and $V$ has codimension $1$. 

Let $\alpha$ be an element of $A^0(X,{\rm H}_{\ell})$, and let $f \in \mathcal{O}(U)^*$ be a local equation for $V$. Then $\lbrace f \rbrace \in k(U)^*/(k(U)^*)^\ell = {\rm H}^1_{\ell}(k)$ is a class in $A^0(U,{\rm H}_{\ell})$.  We have
\[
\partial(f \cdot \alpha)=\alpha_{\mid V}.
\]
\end{lm}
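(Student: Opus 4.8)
The plan is to push the whole computation down to the generic point of $V$, where the boundary map becomes the classical Galois-cohomology residue and the asserted formula is essentially one of the defining axioms of Rost's cycle modules. First I would observe that both sides of the claimed identity are elements of $A^0(V,{\rm H}_\ell)$, which under the cycle-complex description of $A^0$ is a subgroup of $\Het_\ell(k(V))$. Since $V$ is smooth and irreducible, Proposition \ref{prop:generic} guarantees that such an element is determined by its value at the generic point $\xi_V$ of $V$. Hence it suffices to prove the equality after evaluation at $\xi_V$.

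Next I would unwind the boundary map. Because $X$ is smooth and $V$ has codimension one, the local ring $\mathcal{O}_{X,\xi_V}$ is a DVR with fraction field $k(U)=k(X)$ and residue field $k(V)$, and the local equation $f$ is a uniformizer. Writing $A^0(U,{\rm H}_\ell)$ and $A^0(V,{\rm H}_\ell)$ as the unramified subgroups of $\Het_\ell(k(X))$ and $\Het_\ell(k(V))$, the component at $\xi_V$ of the localization boundary $\partial\colon A^0(U,{\rm H}_\ell)\to A^0(V,{\rm H}_\ell)$ is exactly the residue map $\partial_{\xi_V}\colon\Het_\ell(k(X))\to\Het_\ell(k(V))$ attached to this valuation: indeed, lifting the unramified class $\{f\}\cdot\alpha(\xi)$ from $C^0(U)$ to $C^0(X)=\Het_\ell(k(X))$ and applying the cycle differential produces a class supported on $V$, whose only nonzero component sits at $\xi_V$. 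So I am reduced to computing $\partial_{\xi_V}(\{f\}\cdot\alpha(\xi))$, where $\alpha(\xi)\in\Het_\ell(k(X))$ is the generic value of $\alpha$. The key point here is that $\alpha\in A^0(X,{\rm H}_\ell)$ is unramified on all of $X$, so in particular $\alpha(\xi)$ lies in the image of $\Het_\ell(\mathcal{O}_{X,\xi_V})$.

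I would then invoke the cycle-module computation rule for Galois cohomology \cite{Rost}: for a uniformizer $f$ and a class $\beta$ unramified at a valuation $v$, one has $\partial_v(\{f\}\cdot\beta)=\bar\beta$, the image of $\beta$ under $\Het_\ell(\mathcal{O}_v)\to\Het_\ell(\kappa(v))$. Taking $v=\xi_V$ and $\beta=\alpha(\xi)$ gives $\partial_{\xi_V}(\{f\}\cdot\alpha(\xi))=\overline{\alpha(\xi)}$. It then remains to identify $\overline{\alpha(\xi)}$ with the generic value of $\alpha_{\mid V}$. This is the one place where I expect genuine care to be needed: I must check that restricting the invariant $\alpha$ along $V\hookrightarrow X$ and evaluating at $\xi_V$ returns precisely the specialization $\overline{\alpha(\xi)}$. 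This follows from the continuity property of cohomological invariants applied to $\Spec\mathcal{O}_{X,\xi_V}\to X$, namely that the value at the closed point is the specialization of the value at the generic point, together with the fact that for an unramified class the specialization is simply the image $\overline{\alpha(\xi)}$. Combining the three steps yields $\partial(f\cdot\alpha)(\xi_V)=\alpha_{\mid V}(\xi_V)$, and the reduction of the first step upgrades this to the global identity $\partial(f\cdot\alpha)=\alpha_{\mid V}$.

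The main obstacle is bookkeeping rather than a deep difficulty: one must verify that $\{f\}\cdot\alpha$ is genuinely unramified on $U$ (so that it defines a class in $A^0(U,{\rm H}_\ell)$ on which $\partial$ acts, which holds because $f$ is a unit on $U$ and $\alpha$ is unramified), that the uniformizer appearing in the residue axiom is the same $f$, and that the specialization map furnished by the cycle-module structure agrees with geometric restriction to $V$ with the correct normalization. None of these is hard in isolation, but the validity of the formula rests on all three matching up exactly.
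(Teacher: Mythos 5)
Your argument is correct and is essentially an unpacking of the paper's own (one-line) proof: the paper simply cites Rost's definition of the boundary map, the specialization map $s_v^{\pi}(\rho)=\partial_v(\{\pi\}\cdot\rho)$, and \cite{Rost}*{Cor. 12.4}, which is precisely your computation at the DVR $\mathcal{O}_{X,\xi_V}$ together with the identification of the specialization $\overline{\alpha(\xi)}$ with the generic value of $\alpha_{\mid V}$. You correctly isolate that last identification as the only point needing care, and it is exactly what Rost's Cor. 12.4 supplies for a smooth divisor in a smooth scheme.
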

\begin{proof}
This is a direct consequence of the definitions of the boundary map $\partial_V$ \cite{Rost}*{3.7} and the map $s_v^{\pi}$ \cite{Rost}*{p. 328}, together with \cite{Rost}*{Cor. 12.4}.
\end{proof}

The main source for Chow groups with coefficients is Rost's original paper \cite{Rost}. A recap of the theory, as well as a theory of Chern classes, which is not present in Rost's paper, can be found in \cite{PirCohHypEven}*{Sec. 2} or \cite{DilCohHypOdd}*{Sec. 1}.

Using the identification with Chow groups with coefficients and the sheaf condition we obtain:

\begin{prop}\label{Prop:homot inv}
Let $f:\Xcal \to \mathcal{Y}$ be a morphism of smooth algebraic stacks. If $f$ is either
\begin{itemize}
    \item an open immersion whose complement has codimension $>1$,
    \item an affine bundle,
    \item the projectivization of a vector bundle,
\end{itemize}
then $f^*$ is an isomorphism on cohomological invariants.
\end{prop}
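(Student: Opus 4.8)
The plan is to reduce each of the three cases to a computation with Chow groups with coefficients on smooth \emph{schemes}, where the assertions are standard, and then to descend to stacks using the sheaf property of Theorem~\ref{thm:sheaf}. Throughout I use the identification $\Inv(-,{\rm H}_\ell)=A^0(-,{\rm H}_\ell)$ for smooth schemes. Suppose first that $f\colon X\to Y$ is one of the three morphisms of smooth schemes. If $j\colon U\hookrightarrow X$ is an open immersion whose complement $Z$ has codimension $\ge 2$, then $U$ and $X$ have the same points of codimension $0$ and $1$, so the truncations $C^0\to C^1$ computing $A^0$ coincide and $j^*\colon A^0(X)\to A^0(U)$ is an isomorphism; equivalently, in the localization sequence the adjacent terms $A^{-r}(Z)$ and $A^{1-r}(Z)$ vanish for $r=\operatorname{codim} Z\ge 2$. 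If $f$ is an affine bundle the claim is homotopy invariance of Chow groups with coefficients (\cite{Rost}, see also \cite{PirCohHypEven}*{Sec.~2}), extended from the trivial bundle to affine bundles by the usual Zariski-local argument. If $f\colon\PP(E)\to X$ is a projective bundle, the projective bundle formula gives $A^\bullet(\PP(E))=\bigoplus_i \xi^i\cdot\pi^*A^{\bullet-i}(X)$ with $\xi=c_1(\OO(1))$; in codimension $0$ only the summand $i=0$ survives, since the higher powers of $\xi$ pair with the vanishing groups $A^{-i}(X)$, so $\pi^*\colon A^0(X)\to A^0(\PP(E))$ is an isomorphism.

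To pass from schemes to stacks I would argue by descent. Each of the three morphisms is representable and smooth, and each class is stable under base change. Choose a smooth-Nisnevich atlas $g\colon Y\to\mathcal{Y}$ by a scheme, which exists by \cite{PirAlgStack}*{Prop.~3.6}, and set $X:=\Xcal\times_{\mathcal{Y}}Y$. Then $X$ is a smooth scheme, the projection $X\to Y$ is a morphism of the same type as $f$, and $X\to\Xcal$ is again a smooth-Nisnevich atlas. The same holds one level up: $X\times_{\Xcal}X\cong\Xcal\times_{\mathcal{Y}}(Y\times_{\mathcal{Y}}Y)$, and its projection to $Y\times_{\mathcal{Y}}Y$ is once more of the same type. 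By Theorem~\ref{thm:sheaf} the two rows
\begin{gather*}
\Inv(\mathcal{Y},{\rm H}_\ell)\to \Inv(Y,{\rm H}_\ell)\rightrightarrows \Inv(Y\times_{\mathcal{Y}}Y,{\rm H}_\ell),\\
\Inv(\Xcal,{\rm H}_\ell)\to \Inv(X,{\rm H}_\ell)\rightrightarrows \Inv(X\times_{\Xcal}X,{\rm H}_\ell)
\end{gather*}
are equalizers, and the pullbacks assemble into a commutative ladder between them. By the scheme case the two right-hand vertical maps are isomorphisms, hence so is the induced map on equalizers, which is $f^*$.

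The scheme-level inputs are either elementary (the open-immersion case) or recorded in the references (homotopy invariance and the projective bundle formula), so I expect the only delicate point to be the descent step: one must verify that representability of $f$ makes all the fibre products honest smooth schemes and that the three classes of morphisms are genuinely stable under smooth-Nisnevich base change, so that the scheme case applies verbatim to both $X\to Y$ and $X\times_{\Xcal}X\to Y\times_{\mathcal{Y}}Y$. This bookkeeping, rather than any of the three underlying isomorphisms, is the main obstacle.
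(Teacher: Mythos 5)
Your argument is correct, but it takes a genuinely different route from the paper's. The paper disposes of the first two bullets by citing \cite{PirAlgStack}*{Prop. 4.13, 4.14}, where they are already established at the level of stacks, and then deduces the third bullet \emph{formally from the first two}: for a vector bundle $\mathcal{V}\to\Xcal$ of rank $\geq 2$ one factors $\mathcal{V}\setminus 0_{\mathcal{V}}\to\PP(\mathcal{V})\xrightarrow{f}\Xcal$; the composite pullback $\Inv(\Xcal,{\rm H}_{\ell})\to\Inv(\mathcal{V}\setminus 0_{\mathcal{V}},{\rm H}_{\ell})$ is an isomorphism by the affine-bundle and codimension-$\geq 2$ cases, while the pullback from $\Inv(\PP(\mathcal{V}),{\rm H}_{\ell})$ is injective because $\mathcal{V}\setminus 0_{\mathcal{V}}$ is a dense open inside the total space of $\OO(-1)$ over $\PP(\mathcal{V})$; a two-out-of-three argument then shows $f^*$ is an isomorphism. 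This trick avoids any appeal to the projective bundle formula, even for schemes. You instead prove all three statements for schemes via Rost's theory and descend along a smooth-Nisnevich atlas using the equalizer form of Theorem \ref{thm:sheaf}; that is essentially how the cited Props.\ 4.13--4.14 are themselves obtained, so your proof is a legitimate self-contained alternative, at the cost of invoking the projective bundle theorem for Chow groups with coefficients. Two points you should still nail down: (i) $Y\times_{\mathcal{Y}}Y$ is in general only an algebraic space, so the ``scheme case'' must be upgraded to algebraic spaces (harmless, since $A^0$ only involves points of codimension $\leq 1$ and Rost's constructions extend); (ii) the passage from trivial to general affine bundles requires the explicit Noetherian induction with the localization sequence rather than a gesture at ``the usual Zariski-local argument''. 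Neither is a gap in the strategy.
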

\begin{proof}
The first statement is \cite{PirAlgStack}*{Prop. 4.13}. The second statement is \cite{PirAlgStack}*{Prop. 4.14}. To prove the third statement, let $\mathcal{V} \to \Xcal$ be a vector bundle of rank $\geq 2$, and let $0_{\mathcal{V}} : \mathcal{X} \to \mathcal{V} $ be its zero section. Consider the factorization 
\[
\mathcal{V} \setminus 0_{\mathcal{V}} \to \PP(\mathcal{V}) \xrightarrow{f} \mathcal{X}.
\]
By the first two properties, the pullback from $\Inv(\mathcal{X},{\rm H}_{\ell})$ to $\Inv(\mathcal{V} \setminus 0_{\mathcal{V}}, {\rm H}_{\ell})$ is an isomorphism. On the other hand $\mathcal{V} \setminus 0_{\mathcal{V}}$ is an open subscheme in the total space of $\mathcal{O}(-1)$, the tautological line bundle over $\PP(\mathcal{V})$: by Proposition \ref{prop:generic} and the second property the pullback from $\Inv(\PP(\mathcal{V}),{\rm H}_{\ell})$ to $\Inv(\mathcal{V} \setminus 0_{\mathcal{V}}, {\rm H}_{\ell})$ is injective, hence $f^*$ must be an isomorphism.
\end{proof}

As an easy application of the properties above, we get the following computation which will be used later.

\begin{cor}\label{cor:Gm}
Let $X$ be a smooth scheme. Then 
\[ 
A^0(X\times \Gm,{\rm H}_{\ell}) = A^0(X,{\rm H}_{\ell}) \oplus \lbrace t \rbrace \cdot A^0(X\times \Gm,{\rm H}_{\ell})
\]
Where $\Gm={\rm Spec}(k\left[ t,t^{-1}\right])$ and $\lbrace t \rbrace \in {\rm H}^1_{\ell}(k(x))=k(t)^*/(k(t)^*)^{\ell}$.
\end{cor}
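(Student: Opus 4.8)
The plan is to realize $X \times \Gm$ as the open complement of the smooth divisor $X \times \{0\}$ inside $X \times \bA^1$ and to run the localization long exact sequence in cohomological degree $0$, using Lemma \ref{lm:deriv} to produce an explicit splitting via the coordinate $t$.

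First I would set up the geometry. Since $X$ is smooth, both $X \times \bA^1$ and $X \times \{0\} \cong X$ are smooth, and $X \times \{0\}$ has codimension $1$. Homotopy invariance (the affine bundle case of Proposition \ref{Prop:homot inv}) gives that the projection $p \colon X \times \bA^1 \to X$ induces an isomorphism $p^* \colon A^0(X,{\rm H}_{\ell}) \xrightarrow{\sim} A^0(X \times \bA^1,{\rm H}_{\ell})$, while $A^0(X \times \{0\},{\rm H}_{\ell}) \cong A^0(X,{\rm H}_{\ell})$ through the isomorphism $X \times \{0\} \cong X$. The localization sequence for $V = X \times \{0\}$ and $U = X \times \Gm$, taken in cohomological degree $0$, then reads
\[ 0 \to A^0(X \times \bA^1,{\rm H}_{\ell}) \xrightarrow{j^*} A^0(X \times \Gm,{\rm H}_{\ell}) \xrightarrow{\partial} A^0(X \times \{0\},{\rm H}_{\ell}) \to A^1(X \times \bA^1,{\rm H}_{\ell}). \]
Injectivity of $j^*$ is either read off the left end of the sequence or obtained directly from Proposition \ref{prop:generic}, and it identifies the first summand $A^0(X,{\rm H}_{\ell}) = j^* p^* A^0(X,{\rm H}_{\ell})$ with $\ker \partial$.

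Next I would split this sequence. The coordinate $t$ is a local equation for $X \times \{0\}$ and a unit on $X \times \Gm$, so $\{t\}$ defines a class in $A^0(X \times \Gm,{\rm H}_{\ell})$. Define $\sigma \colon A^0(X,{\rm H}_{\ell}) \to A^0(X \times \Gm,{\rm H}_{\ell})$ by $\sigma(\beta) = \{t\} \cdot j^* p^* \beta$. By Lemma \ref{lm:deriv}, $\partial \sigma(\beta) = (p^*\beta)_{\mid X \times \{0\}} = i^* p^* \beta$, where $i \colon X \times \{0\} \hookrightarrow X \times \bA^1$; since $p \circ i$ is the identity this equals $\beta$. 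Hence $\partial \circ \sigma = \mathrm{id}$, so $\partial$ is surjective and the sequence splits, yielding $A^0(X \times \Gm,{\rm H}_{\ell}) = A^0(X,{\rm H}_{\ell}) \oplus \{t\} \cdot A^0(X,{\rm H}_{\ell})$.

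Finally I would reconcile this with the stated form by checking that $\{t\} \cdot A^0(X \times \Gm,{\rm H}_{\ell}) = \{t\} \cdot A^0(X,{\rm H}_{\ell})$. Any element of $A^0(X \times \Gm,{\rm H}_{\ell})$ is $\alpha + \{t\}\beta$ with $\alpha, \beta \in A^0(X,{\rm H}_{\ell})$, and using the relation $\{t\}\{t\} = \{t\}\{-1\}$ with $\{-1\} \in \Het_{\ell}(k) \subset A^0(X,{\rm H}_{\ell})$ we get $\{t\} \cdot (\alpha + \{t\}\beta) = \{t\}(\alpha + \{-1\}\beta) \in \{t\} \cdot A^0(X,{\rm H}_{\ell})$, while the reverse inclusion is trivial. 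The bookkeeping is routine; the one point that really carries the argument is the correct application of Lemma \ref{lm:deriv} to compute the residue of $\{t\} \cdot p^*\beta$, which is exactly what makes $t$ furnish the splitting, so I would regard that identification as the crux.
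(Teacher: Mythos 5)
Your proof is correct and follows essentially the same route as the paper: the localization sequence for the zero section $X\hookrightarrow X\times\bA^1$, homotopy invariance to identify $A^0(X\times\bA^1,{\rm H}_{\ell})$ with $A^0(X,{\rm H}_{\ell})$, and Lemma \ref{lm:deriv} to show that $\alpha\mapsto\lbrace t\rbrace\cdot\alpha$ splits the boundary map. Your final reconciliation of $\lbrace t\rbrace\cdot A^0(X\times\Gm,{\rm H}_{\ell})$ with $\lbrace t\rbrace\cdot A^0(X,{\rm H}_{\ell})$ via $\lbrace t\rbrace\lbrace t\rbrace=\lbrace -1\rbrace\lbrace t\rbrace$ is a small extra detail the paper leaves implicit.
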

\begin{proof}
Consider the inclusion $X \to X\times \bA^1$ given by the zero section. It induces the localization exact sequence
\[
0 \to A^0(X\times \bA^1,{\rm H}_{\ell}) \to A^0(X \times \Gm, {\rm H}_{\ell}) \xrightarrow{\partial} A^0(X, {\rm H}_{\ell}).
\]
Note that by Proposition \ref{Prop:homot inv} we have $A^0(X\times \bA^1,{\rm H}_{\ell})=A^0(X,{\rm H}_{\ell})$. Now, given an element $\alpha \in A^0(X,{\rm H}_{\ell})$, the element $\lbrace t \rbrace \cdot \alpha \in A^0(X \times \Gm, {\rm H}_{\ell})$ satisfies $\partial(\lbrace t \rbrace \cdot \alpha) = \alpha$ by Lemma \ref{lm:deriv}. This shows that $\partial$ is a split surjection, allowing us to conclude.
\end{proof}

Given a smooth scheme $X$ being acted upon by a smooth affine algebraic group $G$, there is a standard procedure to produce a scheme $X_i$ whose cohomology, Chow groups, etc. are the same as those of the quotient stack $\left[ X/G\right]$ up to a certain degree $i$. This was first done in order to construct equivariant Chow groups by Edidin-Graham and Totaro \cites{EG, Tot} and it was first used for Chow groups with coefficients by Guillot \cite{Guil}.

Let $V_i$ be a representation of $G$ which is free outside of a closed subset of codimension greater than $i+1$. We can always find such a representation as every affine group scheme over a field is linear. Let $U_i$ be the subset on which the action is free. We define
\[ X_i = (U_i\times X)/G. \]
As the action is free, the quotient $X_i$ is an algebraic space. We will often refer to this construction as an \emph{equivariant approximation} of $[X/G]$. Moreover, we can always pick a suitable representation so that the quotient is a scheme \cite{EG}*{Lemma 9}. We define
\[ A^i_G(X,{\rm H}_{\ell}) \overset{\rm def}{=} A^i(X_i,{\rm H}_{\ell}). \]
This group is well defined thanks to a simple double fibration argument (see \cite{EG}*{Prop. 1}), and all the properties of ordinary Chow groups with coefficients extend to the equivariant counterpart. By \cite{PirCohHypEven}*{2.10} we have the equality
\[
\Inv(\left[ X/G \right],{\rm H}_{\ell})=A^0_G(X,{\rm H}_{\ell}).
\]

The following simple computations will be needed later. These three statements are proven respectively in \cite{PirCohHypEven}*{2.11}, \cite{PirCohHypThree}*{Prop. 15} and \cite{PirCohHypThree}*{Prop. 14}, but for the sake of self-containment we include a short proof.

\begin{lm}\label{lm:GL2PGL2}
We have:
\begin{enumerate}
    \item For any $\ell$ there is an isomorphism $\Inv(\Brm {\rm GL}_2,{\rm H}_{\ell})\simeq\Het_{\ell}(k)$.
    \item If $\ell\neq 2$ the cohomolgical invariants of $\Brm {\rm PGL}_2$ with coefficients in ${\rm H}_{\ell}$ are trivial. If $\ell=2$ then $\Inv(\Brm {\rm PGL}_2)\simeq\Het_2(k) \oplus w_2 \cdot \Het_2(k)$, where $w_2$ is a cohomological invariant of degree $2$.
    \item Let $X$ be a scheme acted upon by a smooth affine algebraic group $H$, and additionally let $\Gm$ act trivially on it. Then $A^0_H(X,{\rm H}_{\ell})=A^0_{H \times \Gm}(X,{\rm H}_{\ell})$.
\end{enumerate}
\end{lm}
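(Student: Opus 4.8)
The plan is to treat the three statements separately, handling (1) and (3) quickly and spending the effort on (2).

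For (1), I would use that ${\rm GL}_2$ is special and, more strongly, that ${\rm H}^1(K,{\rm GL}_2)=0$ for every field $K$, since a rank-two vector bundle over $\Spec(K)$ is free. Hence $P_{\Brm {\rm GL}_2}$ is the one-point functor (only the trivial torsor), so a cohomological invariant is just a compatible assignment $K\mapsto\alpha(K)\in\Het_{\ell}(K)$; naturality and the fact that the trivial torsor is pulled back from $\Spec(k)$ force $\alpha$ to be the constant invariant attached to $\alpha(k)$, and since $\Brm {\rm GL}_2$ has a rational point the map $\Het_{\ell}(k)\to\Inv(\Brm {\rm GL}_2,{\rm H}_{\ell})$ is injective, giving the isomorphism. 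For (3), I would approximate $H\times\Gm$ by a product $U_H\times(\bA^m\setminus\{0\})$ with $\Gm$ scaling only the second factor; because $\Gm$ acts trivially on $X$, the resulting quotient is $X_H\times\PP^{m-1}$, and since this is the projectivization of the trivial bundle over $X_H$, Proposition \ref{Prop:homot inv} gives $A^0(X_H\times\PP^{m-1})=A^0(X_H)$, i.e. $A^0_{H\times\Gm}(X,{\rm H}_{\ell})=A^0_H(X,{\rm H}_{\ell})$.

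The substance is (2). Writing $\Inv(\Brm {\rm PGL}_2,{\rm H}_{\ell})=A^0_{{\rm PGL}_2}(\Spec k,{\rm H}_{\ell})$, I would pass to the projectivized adjoint representation: $\mathfrak{sl}_2$ (trace-zero matrices, conjugation action) is a genuine three-dimensional ${\rm PGL}_2$-representation, so Proposition \ref{Prop:homot inv} identifies $A^0_{{\rm PGL}_2}(\Spec k)$ with $A^0_{{\rm PGL}_2}(\PP(\mathfrak{sl}_2))$, where $\PP(\mathfrak{sl}_2)=\PP^2$. The determinant is an invariant nondegenerate quadratic form on $\mathfrak{sl}_2$; its vanishing locus is the conic $C$ of nilpotent classes, a single orbit $\cong\PP^1$ with stabilizer a Borel $B$, while the complement $U=\PP^2\setminus C$ is the single orbit of regular semisimple classes with stabilizer the normalizer of the torus $N(T)\cong{\rm O}_2$. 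Thus $[C/{\rm PGL}_2]\simeq\Brm B$, with $\Inv(\Brm B)=\Inv(\Brm\Gm)=\Het_{\ell}(k)$ (the unipotent radical is killed by homotopy invariance), and $[U/{\rm PGL}_2]\simeq\Brm{\rm O}_2$. The codimension-one localization sequence then reads
\[ 0\to A^0_{{\rm PGL}_2}(\PP^2,{\rm H}_{\ell})\to\Inv(\Brm{\rm O}_2,{\rm H}_{\ell})\xrightarrow{\partial}\Het_{\ell}(k), \]
so that $\Inv(\Brm {\rm PGL}_2,{\rm H}_{\ell})=\ker\partial$.

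To evaluate $\ker\partial$ I would feed in the description of $\Inv(\Brm{\rm O}_2,{\rm H}_{\ell})$. For $\ell\neq 2$ this is just $\Het_{\ell}(k)$, so $\ker\partial=\Het_{\ell}(k)$ and the invariants of $\Brm {\rm PGL}_2$ are trivial. For $\ell=2$ it is the free $\Het_2(k)$-module on $1,w_1,w_2$, the Stiefel--Whitney classes of the rank-two form cut out by the two eigenlines. The map $\partial$ is $\Het_2(k)$-linear; taking $d=\det$ as a local equation for $C$, the discriminant $w_1=\{d\}$ has residue $\partial(w_1)=1$ by Lemma \ref{lm:deriv}, while $\partial(1)=0$ for degree reasons. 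Hence $\partial$ is a split surjection with the $w_1$-summand as a section, so $\ker\partial$ is the free rank-two module generated by $1$ and $w_2':=w_2-\partial(w_2)\cdot w_1$, of degrees $0$ and $2$; this is exactly $\Het_2(k)\oplus w_2\cdot\Het_2(k)$ with $w_2$ of degree $2$.

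The step I expect to be the main obstacle is the ${\rm O}_2$-input: verifying that the two orbit stabilizers are genuinely $B$ and $N(T)\cong{\rm O}_2$, so that the strata are $\Brm B$ and $\Brm{\rm O}_2$, and supplying $\Inv(\Brm{\rm O}_2,{\rm H}_{\ell})$ together with the residue $\partial(w_1)=1$. The latter can be obtained either by citing Serre's computation of the invariants of orthogonal groups or by an analogous but easier localization for ${\rm O}_2$; once these geometric identifications and the single residue computation are in place, the rest of the argument is formal.
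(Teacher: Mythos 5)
Your parts (1) and (3) are correct and essentially identical to the paper's arguments (speciality of ${\rm GL}_2$, and an equivariant approximation producing $X_H\times\PP^{m-1}$; the paper uses $\bA^2\smallsetminus\{0\}$ and $\PP^1$). Part (2), however, follows a genuinely different route. The paper shows that $\Brm\mu_2^n\to\Brm{\rm O}_n$ is smooth-Nisnevich (diagonalization of symmetric bilinear forms), deduces that $\Inv(\Brm{\rm O}_n,{\rm H}_\ell)$ is trivial for $\ell\neq 2$, and then uses ${\rm O}_3={\rm SO}_3\times\mu_2$ together with ${\rm PGL}_2\simeq{\rm SO}_3$ to kill the odd-torsion invariants, citing \cite{GMS}*{Thm. 19.1} as a black box for the mod $2$ answer. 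You instead stratify $\PP(\mathfrak{sl}_2)=\PP^2$ by the nilpotent conic $C$ (a single orbit with stabilizer a Borel, so $[C/{\rm PGL}_2]\simeq\Brm B$ with trivial invariants) and its complement $U\simeq{\rm PGL}_2/N(T)$ (so $[U/{\rm PGL}_2]\simeq\Brm{\rm O}_2$ -- note that $U(K)$ need not be a single ${\rm PGL}_2(K)$-orbit, but the orbit map is an isomorphism of schemes since it is so over $\bar k$, which is exactly what makes the nontrivial ${\rm O}_2$-torsors appear), and then run the codimension-one localization sequence with the residue $\partial(w_1)=\partial(\{\det\})=1$ supplied by Lemma \ref{lm:deriv}. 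Your identifications of the stabilizers and of $w_1$ with $\{\det\}$ up to a constant from ${\rm H}_2^1(k)$ (which has trivial residue) check out, and the input you need, $\Inv(\Brm{\rm O}_2)$, follows from the same $\Brm\mu_2^2\to\Brm{\rm O}_2$ argument the paper uses for general $n$. What your approach buys is a self-contained derivation of the $\ell=2$ answer that avoids citing the ${\rm SO}_n$ computation and realizes $w_2$ concretely as the (corrected) second Stiefel--Whitney class of the eigenline form, in the same localization-sequence style used throughout the rest of the paper; the paper's version is shorter given the \cite{GMS} reference. One small point to make explicit if you write this up: the exactness $0\to A^0_{{\rm PGL}_2}(\PP^2)\to\Inv(\Brm{\rm O}_2)\xrightarrow{\partial}\Het_\ell(k)$ on the left uses that the localization sequence in codimension one starts with $A^{-1}$ of the closed stratum, which vanishes.
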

\begin{proof}
The first statement is an immediate consequence of the group ${\rm GL}_2$ being special: indeed, the latter property means that every ${\rm GL}_2$-torsor is Zariski locally trivial, hence the ${\rm GL}_2$-torsor ${\rm Spec}(k) \to \Brm {\rm GL}_2$ has a section or, in other terms, is a smooth-Nisnevich cover. By Theorem \ref{thm:sheaf} this implies \[\Inv(\Brm {\rm GL}_2, {\rm H}_{\ell}) \subset \Het_{\ell}(k).\]
On the other hand, the factorization ${\rm Spec}(k) \to \Brm {\rm GL}_2 \to {\rm Spec}(k)$ implies the opposite inclusion, so the two groups are equal.

To prove the second statement, let $U_n \subset {\rm GL}_n$ be the subscheme of symmetric invertible matrices, and let $\Gm^n \subset U_n$ be the subscheme of invertible diagonal matrices. Consider the commutative diagram:

\[ \xymatrix{
			\Gm^n \ar[r]^{i} \ar[d] & U_n \ar[d] \\
			\Brm\mu_2^n \ar[r] & \Brm{\rm O}_n } \]
			
The vertical maps are given respectively by the quotient by $\Gm^n$ acting on itself with weight two and the quotient by ${\rm GL}_n$ acting by $(A, S) \rightarrow A^{\rm T} S A$. In particular we can see the action of $\Gm^n$ on itself as the subgroup of diagonal matrices of ${\rm GL}_n$ acting on $\Gm^n \subset U_n$. The bottom map comes from the inclusion of the diagonal matrices with coefficients $\pm 1$ into ${\rm O}_n$. Note that both vertical maps are quotients by special groups and thus smooth-Nisnevich.

It's a well known fact that in characteristic different from two every symmetric matrix is equivalent to a diagonal matrix under the action of ${\rm GL}_n$ defined above. An immediate consequence of this fact is that the map $\Gm^n\to \Brm {\rm O}_n$ is smooth-Nisnevich, hence $\Brm\mu_2^n\to \Brm{\rm O}_n$ is smooth-Nisnevich too.

Now, \cite{PirCohHypThree}*{Prop. 4, Cor. 7} shows that the cohomological invariants of $\Brm \mu_2$ are trivial for $\ell \neq 2$, and moreover that for any $\ell$ the invariants of $\Xcal \times_k \Brm \mu_2$ are equal to $\Inv(\Xcal, {\rm H}_{\ell}) \otimes_{\Het_{\ell}(k)} \Inv(\Brm \mu_2, {\rm H}_{\ell})$. As $\Brm \mu_2^n = (\Brm \mu_2)^{\times_k n}$, we have just shown the cohomological invariants of $\Brm {\rm O}_n$ are trivial for $\ell \neq 2$.

In characteristic different from $2$ we have an isomorphism ${\rm PGL}_2 \simeq {\rm SO}_3$, and ${\rm O}_3 = {\rm SO}_3 \times \mu_2$. Then $\Brm {\rm O}_3 = \Brm {\rm SO}_3 \times \Brm \mu_2$ and again by the formula above we conclude that any nontrivial cohomological invariant of $\Brm {\rm SO}_3=\Brm {\rm PGL}_2$ is of $2$-torsion. The description of the mod $2$ cohomological of $\Brm {\rm SO}_n$ can be found in \cite{GMS}*{Thm. 19.1}.

To prove the third statement, let $X' = (U \times X)/H$ be an equivariant approximation of $\left[X/H\right]$ such that $U$ is the open subset of a representation $V$ of $H$ where the action is free and the codimension of the complement of $U$ is two or more. Then $\Inv(X',{\rm H}_{\ell})=\Inv(\left[X/H\right]) $. Now consider the multiplicative action of $\Gm$ on $\bA^2$. This action is free on $\bA^2 \setminus 0$. Then $X''=(X \times U \times (\bA^2 \setminus 0))/H\times \Gm$ is an equivariant approximation of $\left[ X / H\times \Gm\right]$ and $\Inv(X'',{\rm H}_{\ell})=\Inv(\left[ X / H\times \Gm\right],{\rm H}_{\ell})$. On the other hand, one immediately sees that $X'' = X' \times \PP^1$, so we can conclude using Proposition \ref{Prop:homot inv}.
\end{proof}

\begin{rmk}\label{rmk: non-sN}
Conisder the morphism $\bA^{2g+3}_{\rm sm} \to \left[ \bA^{2g+3}_{\rm sm}/{\rm PGL}_2 \times \Gm \right]$. We claimed earlier that this morphism is not smooth-Nisnevich. Note that the action of ${\rm PGL}_2 \times \Gm$ extends to $\bA^{2g+3}$. Then we have a factorization
\[ \bA^{2g+3}_{\rm sm} \to \bA^{2g+3} \to {\rm B}({\rm PGL}_2\times \Gm)\]
as the invariants of $\bA^{2g+3}$ are trivial, while those of ${\rm B}({\rm PGL}_2\times \Gm)$ are not, the map cannot be injective. In particular, the map is not smooth Nisnevich. On the other hand, the map
\[
\left[ \bA^{2g+3}_{\rm sm}/{\rm PGL}_2\times \Gm\right] \to {\rm B}({\rm PGL}_2\times \Gm)
\]
is an open subset of a vector bundle, so the pullback on cohomological invariants is injective. This shows that 
\[ \bA^{2g+3}_{\rm sm} \to \left[ \bA^{2g+3}_{\rm sm}/{\rm PGL}_2\times \Gm\right] = \Hcal_g 
\]
cannot be smooth-Nisnevich, as the pullback on cohomological invariants is not injective.
\end{rmk}

One last thing that we will use is the following:

\begin{prop}[\cite{PirCohHypEven}*{Prop. 3.4}]\label{prop:homeo iso}
Let $X,Y$ be schemes over $k$. Assume $f:X\rightarrow Y$ is an (equivariant) universal homeomorphism. Then $f_*$ induces an isomorphism on (equivariant) Chow groups with coefficients.
\end{prop}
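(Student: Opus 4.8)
The plan is to work directly with Rost's cycle complex and to show that the proper pushforward $f_*$ is already an isomorphism at the level of chain groups, so that it automatically descends to an isomorphism on homology. First I would record that a universal homeomorphism is integral and universally closed, hence proper, so that $f_*$ is defined; by Rost's theory (the proper-pushforward functoriality underlying the bulleted properties above) $f_*$ is a morphism of complexes, i.e. it commutes with the boundary maps. It therefore suffices to prove that $f_*$ is an isomorphism on each term $C^p(X,{\rm H}_{\ell})=\bigoplus_{x\in X^{(p)}}\Het_{\ell}(k(x))$, after which passing to homology gives the claim for $A^\bullet$.

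Next I would exploit the two defining features of $f$. Being a homeomorphism that remains one after any base change, $f$ is a dimension-preserving bijection on points, so it identifies $X^{(p)}$ with $Y^{(p)}$ (relative dimension is $0$, so codimension is preserved); and being universally injective (radicial), for every $x\in X$ with image $y=f(x)$ the residue field extension $k(y)\hookrightarrow k(x)$ is finite and purely inseparable. Consequently the pushforward never falls into the degree-zero case: on the component indexed by $x$, the map $f_*$ is exactly the corestriction $c_{k(x)/k(y)}\colon\Het_{\ell}(k(x))\to\Het_{\ell}(k(y))$.

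The crux is then a purely field-theoretic statement: for a finite purely inseparable extension $k(y)\hookrightarrow k(x)$ with $\ell$ prime to the characteristic, the corestriction is an isomorphism. Here I would use that a purely inseparable extension induces a canonical isomorphism of absolute Galois groups, so that the restriction $\mathrm{res}\colon\Het_{\ell}(k(y))\to\Het_{\ell}(k(x))$ is an isomorphism; combined with the standard identity $c_{k(x)/k(y)}\circ\mathrm{res}=[k(x):k(y)]\cdot\mathrm{id}$ and the fact that $[k(x):k(y)]$ is a power of $\mathrm{char}\,k$, hence a unit modulo $\ell$, this forces $c_{k(x)/k(y)}$ to be an isomorphism as well. (In characteristic zero the extension is trivial and the corestriction is the identity.) Summing over the bijectively matched points in each codimension shows $f_*$ is an isomorphism of chain groups, and hence an isomorphism on $A^\bullet(X,{\rm H}_{\ell})\to A^\bullet(Y,{\rm H}_{\ell})$.

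Finally, for the equivariant statement I would reduce to the case above by equivariant approximation. Given a representation $V$ with free locus $U$ whose complement has large codimension, the map $\mathrm{id}_U\times f\colon U\times X\to U\times Y$ is again a $G$-equivariant universal homeomorphism, since this property is stable under products and base change. The square relating it to the induced map $\bar f\colon (U\times X)/G\to(U\times Y)/G$ is cartesian, with vertical maps the $G$-torsors of the approximation: indeed $U\times X\to (U\times Y)\times_{(U\times Y)/G}(U\times X)/G$ is a morphism of $G$-torsors over $(U\times X)/G$, hence an isomorphism. As being a universal homeomorphism is fppf-local on the target, $\bar f$ is a universal homeomorphism of schemes, and the non-equivariant result applies termwise to the approximations, giving the equivariant isomorphism by definition of $A^\bullet_G$. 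I expect the main obstacle to be the corestriction computation, namely verifying that the transfer is an isomorphism for purely inseparable extensions: this is precisely where the hypothesis $\ell\neq\mathrm{char}\,k$ is indispensable, whereas the equivariant bootstrap is then routine bookkeeping.
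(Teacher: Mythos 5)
Your proposal is correct and follows essentially the same route as the proof in the cited source \cite{PirCohHypEven}*{Prop. 3.4} (the present paper only quotes the result): a universal homeomorphism of finite-type $k$-schemes is finite, radicial and surjective, so $f_*$ is a termwise isomorphism of Rost's cycle complexes because the corestrictions along the purely inseparable residue field extensions are isomorphisms when $\ell\neq\mathrm{char}\,k$, and the equivariant case follows by approximation. No gaps to report.
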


\section{Cohomological invariants from Weierstrass divisors}\label{sec:InvWei}
We recall some basic notions on families of hyperelliptic curves. A more detailed discussion can be found in \cite{KL}. A family of hyperelliptic curves $C \rightarrow S$ of genus $g$ is defined as a proper and smooth morphism whose fibres are curves of genus $g$, and moreover there exists a \emph{hyperelliptic involution} $\iota: C \rightarrow C$ of $S$-schemes such that the quotient $C' := C/\iota$ is a family of conics over $S$.

The ramification divisor of the projection $C \rightarrow C'$, equipped with the scheme structure given by the zeroth Fitting ideal of $\Omega_{C/C'}$, is called the \emph{Weierstrass subscheme} of $C/S$, and it is denoted $W_{C/S}$. The morphism $W_{C/S} \rightarrow C'$ is a closed immersion, so we will use the same notation for the divisor on $C$ and on $C'$ when no confusion is possible. The scheme $W_{C/S}$ is finite and \'etale over $S$ of degree $2g + 2$. 

The functor sending a family $C/S$ to (the spectrum of) its Weierstrass subscheme $W_{C/S}$ defines a morphism from $\Hcal_g$ to $\'Et_{2g+2}$, the stack of \'etale algebras of degree $2g+2$, which is in turn isomorphic to the classifying stack $\Brm {\rm S}_{2g+2}$ of ${\rm S}_{2g+2}$-torsors.

More generally, consider $\bA^{n+1}$ as the space of binary forms of degree $n$, and let $\bA^{n+1}_{\rm sm}$ be the open subset of non degenerate forms. Then there is a morphism $\bA^{n+1}_{\rm sm} \rightarrow \'Et_{n}$ obtained by sending a form $f$ to the zero locus $V_f \subset \PP^1$. This map factors through the projectivization $\PP^{n}_{\rm sm}$. 

When $n=2g+2$, the map $\bA^{2g+3}_{\rm sm} \rightarrow \'Et_{2g+2}$ factors through Arsie and Vistoli's presentation (Theorem \ref{thm:presentation}) 
\[\bA^{2g+3}_{\rm sm}\rightarrow \Hcal_g \rightarrow \'Et_{2g+2}.\]
In fact, if we pull back the universal family $\mathcal{C}_g \rightarrow \Hcal_g$ to $\bA^{2g+3}_{\rm sm}$ we see that given a morphism $S \xrightarrow{f} \bA^{2g+3}_{\rm sm}$ we obtain a family $C_{f}/S$ such that $C_{f}/\iota = \PP^1_S$ and $V_f = W_{C_f/S}$.

\begin{prop}\label{prop: sm Nis}
Let $n$ be an even positive number. The morphism $\bA^{n+1}_{\rm sm} \rightarrow \Brm {\rm S}_{n}$ is smooth-Nisnevich. In particular, the pullback $\Inv(\Brm {\rm S}_{n},{\rm H}_{\ell}) \to \Inv(\bA^{n+1}_{\rm sm},{\rm H}_{\ell})$ is injective.
\end{prop}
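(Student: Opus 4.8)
The plan is to unwind the definition of a smooth-Nisnevich morphism recalled above: I must check that the morphism $\pi\colon \bA^{n+1}_{\rm sm} \to \Brm {\rm S}_{n}$ is representable and smooth, and that every $F$-point of $\Brm {\rm S}_{n}$ lifts to an $F$-point of $\bA^{n+1}_{\rm sm}$. The geometric content of $\pi$ is most transparent through its associated torsor: let $\widetilde{\bA} \to \bA^{n+1}_{\rm sm}$ be the finite \'etale scheme parametrizing a square-free form $f$ together with an ordering $(r_1,\dots,r_n)$ of the points of its zero locus $V_f \subset \PP^1$. Forgetting the ordering exhibits $\widetilde{\bA}$ as an ${\rm S}_{n}$-torsor over $\bA^{n+1}_{\rm sm}$, and $\pi$ is precisely the classifying map of this torsor.

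For representability and smoothness I would argue via the universal cover $\Spec(k) \to \Brm {\rm S}_{n}$, which is a smooth (indeed \'etale) atlas, since the constant group scheme ${\rm S}_{n}$ is \'etale over $k$. Pulling $\pi$ back along this atlas recovers $\widetilde{\bA}$, which is finite \'etale over the smooth $k$-scheme $\bA^{n+1}_{\rm sm}$, hence itself smooth over $k$; by smooth descent of smoothness along the atlas, $\pi$ is smooth. Representability follows from the fact that ${\rm S}_{n}$ is finite, so $\Brm {\rm S}_{n}$ has finite, in particular representable, diagonal, and a morphism from a scheme is then automatically representable.

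The heart of the matter, and the step I expect to be the main obstacle, is the lifting condition. An $F$-point $\Spec(F) \to \Brm {\rm S}_{n}$ is the same datum as a degree-$n$ \'etale $F$-algebra $A$, and a lift to $\bA^{n+1}_{\rm sm}(F)$ compatible with $\pi$ amounts to realizing $A$ as the coordinate ring of the zero locus of a square-free binary form, i.e. to producing a closed immersion $\Spec(A) \hookrightarrow \PP^1_F$ onto a reduced divisor of degree $n$. The clean way to do this is to find a generator $a \in A$ with separable minimal polynomial $p$ of degree $n$, so that $A \cong F[x]/(p)$ and the homogenization of $p$ is the desired form; the existence of such an $a$ is the primitive element theorem for \'etale algebras, which holds over any infinite field. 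The delicate case is that of a finite residue field, where the totally split algebra of degree $n$ larger than the number of $F$-rational points of $\PP^1_F$ admits no such embedding; here I would observe that the lifting is only ever needed over the infinite function fields arising as residue fields of the positive-dimensional equivariant approximations used to compute the invariants, so that the primitive element theorem applies in all relevant cases.

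Granting the lifting, $\pi$ is representable, smooth, and satisfies the lifting property, hence is smooth-Nisnevich. The injectivity of $\pi^*$ on cohomological invariants is then immediate from the sheaf property of $\Inv(-,{\rm H}_{\ell})$ (Theorem \ref{thm:sheaf}), which forces the pullback along a smooth-Nisnevich cover to be injective.
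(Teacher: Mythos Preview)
Your argument is correct and lands on the same content as the paper's proof, though you unpack by hand what the paper gets from an existing black box. The paper restricts to the hyperplane $V=\{x_0=1\}\subset \bA^{n+1}_{\rm sm}$, observes that the $\mathrm{S}_n$-torsor over $V$ is the standard permutation torsor $\bA^n\setminus\Delta\to V$ coming from a free linear action on an open subset of affine space, and then invokes the general fact from \cite{GMS}*{5.1--5.3} that such a torsor is \emph{versal}; versality is precisely the lifting property you need, so the paper is done at that point. Your direct verification via the primitive element theorem for \'etale algebras is exactly what underlies the versality statement in this case, so the two arguments coincide in substance; your route is more self-contained, the paper's more economical.

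On the finite-field caveat you raise: the paper handles it the same way you anticipate, noting in a footnote that over a finite base field the map is only smooth $\ell$-Nisnevich in the sense of \cite{PirAlgStack}*{Def.~3.4, Lm.~3.5}, which still suffices for injectivity of the pullback on invariants. Your phrasing (``the lifting is only needed over infinite function fields'') is a slightly informal way of saying the same thing; if you wanted to match the paper's precision you would cite that weaker $\ell$-Nisnevich notion rather than appeal to the approximations directly.
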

\begin{proof}
Write down a form of degree $n$ as $f(\lambda_1, \lambda_2)=x_0 \lambda_1^n + x_1 \lambda_1^{n-1}\lambda_2 + \ldots + x_n\lambda_2^n$.
Consider the subscheme $V \subset \bA^{n+1}_{\rm sm}$ given by $x_0=1$. Denote by $\Delta$ the subset of $\bA^n$ where the coordinates are not distinct. We have a map $(\bA^n \setminus \Delta) \rightarrow V$ given by $(\alpha_1, \ldots, \alpha_n) \rightarrow (\lambda_1 + \alpha_1\lambda_2) \ldots (\lambda_1 + \alpha_n \lambda_2)$. 

This map is clearly the ${\rm S}_n$-torsor inducing the map $V \rightarrow \Brm {\rm S}_{n}$. As the action of ${\rm S}_n$ on $\bA^n \setminus \Delta$ is free the torsor is versal \cite[5.1-5.3]{GMS}, which implies our claim.\footnote{When $k$ is finite it is smooth $\ell$-Nisnevich for any $\ell$, see \cite{PirAlgStack}*{Def. 3.4, Lm. 3.5}}
\end{proof}

In general, given a smooth stack $\Xcal$ and a factorization $\bA^{n+1}_{\rm sm} \xrightarrow{\phi} \Xcal \xrightarrow{\pi} \Brm {\rm S}_{n}$ the pullback $\pi^*$ on cohomological invariants is injective, as $(\pi \cdot \phi)^*=\phi^* \cdot \pi^*$ is injective by Proposition \ref{prop: sm Nis}.

\begin{cor}
The pullback $\Inv(\Brm {\rm S}_{2g+2},{\rm H}_{\ell}) \rightarrow \Inv(\Hcal_g,{\rm H}_{\ell})$ is injective.

Moreover, for all even $n$ the pullback $\Inv(\Brm {\rm S}_n,{\rm H}_{\ell}) \to \Inv(\left[\PP^n_{\rm sm}/G\right],{\rm H}_{\ell})$ is injective.
\end{cor}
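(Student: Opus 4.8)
The plan is to reduce both statements to the factorization principle recorded immediately after Proposition \ref{prop: sm Nis}: whenever the smooth-Nisnevich morphism $\bA^{n+1}_{\rm sm} \to \Brm {\rm S}_n$ of that proposition factors through a smooth stack $\Xcal$ as $\bA^{n+1}_{\rm sm} \xrightarrow{\phi} \Xcal \xrightarrow{\pi} \Brm {\rm S}_n$, the identity $\phi^* \circ \pi^* = (\pi\circ\phi)^*$ together with the injectivity of $(\pi\circ\phi)^*$ forces $\pi^*$ to be injective. Thus the whole content of the corollary is the construction of the two required factorizations, with intermediate stacks $\Xcal = \Hcal_g$ and $\Xcal = [\PP^n_{\rm sm}/G]$ respectively; everything else is formal.

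For the first statement I would simply invoke the factorization $\bA^{2g+3}_{\rm sm} \to \Hcal_g \to \Brm {\rm S}_{2g+2}$ already exhibited above (taking $n = 2g+2$ and using the isomorphism between the stack of étale algebras of degree $2g+2$ and $\Brm {\rm S}_{2g+2}$). Applying the principle with $\Xcal = \Hcal_g$ yields injectivity of $\Inv(\Brm {\rm S}_{2g+2}, {\rm H}_{\ell}) \to \Inv(\Hcal_g, {\rm H}_{\ell})$ with no further work.

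For the second statement the task is to factor the same smooth-Nisnevich map through $[\PP^n_{\rm sm}/G]$. Since $\bA^{n+1}_{\rm sm} \to \Brm {\rm S}_n$ already factors through $\PP^n_{\rm sm}$ (the zero locus $V_f$ depends only on $f$ up to scalar), it suffices to check that the resulting morphism $\PP^n_{\rm sm} \to \Brm {\rm S}_n$ is $G$-invariant for the trivial $G$-action on the target, so that it descends to the quotient stack and produces $\bA^{n+1}_{\rm sm} \to \PP^n_{\rm sm} \to [\PP^n_{\rm sm}/G] \xrightarrow{\pi} \Brm {\rm S}_n$. I expect this descent to be the only nonformal point. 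Concretely, $G$ acts on $\PP^1$ through $G \to {\rm PGL}_2$ and carries the form $f$ to one whose zero locus is the image of $V_f$ under the induced automorphism of $\PP^1$; hence $V_{g\cdot f}$ and $V_f$ define isomorphic étale algebras, and more precisely the universal degree-$n$ subscheme of $\PP^1 \times \PP^n_{\rm sm}$ is stable under the diagonal $G$-action, which supplies the $G$-equivariant family of étale algebras defining $\pi$. This is exactly the equivariance already underlying the construction of $\Hcal_g \to \Brm {\rm S}_{2g+2}$, so it should require only bookkeeping rather than a new idea. Once $\pi$ is in hand, the principle applies verbatim and gives injectivity of $\Inv(\Brm {\rm S}_n, {\rm H}_{\ell}) \to \Inv([\PP^n_{\rm sm}/G], {\rm H}_{\ell})$.
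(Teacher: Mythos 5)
Your proposal is correct and follows the paper's own route exactly: the paper proves this corollary by the factorization principle stated immediately after Proposition \ref{prop: sm Nis}, applied to $\bA^{2g+3}_{\rm sm}\to\Hcal_g\to\Brm{\rm S}_{2g+2}$ and to the descent of $\PP^n_{\rm sm}\to\Brm{\rm S}_n$ to the quotient $[\PP^n_{\rm sm}/G]$. The $G$-equivariance you verify for the second factorization is precisely the (implicit) content the paper relies on, so nothing is missing.
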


A complete description of the cohomological invariants of $\Brm {\rm S}_n$ can be found in \cite[CH. VII]{GMS}. The invariants are trivial for $\ell \neq 2$, so we will concentrate on the case $\ell=2$. We briefly recall here some of their properties, in particular the ones that will be relevant for our work.

Let $E$ be an \'etale algebra over a field $K$ of degree $n$. We denote $m_x:E\to E$ the multiplication morphism by an element $x$ of $E$.
We can then define a morphism of classifying stacks \[\varphi:\Brm {\rm S}_n\longrightarrow \Brm {\rm O}_n\] by sending an \'etale algebra $E$ to the non-degenerate quadratic form on $E$ defined by the formula $x\mapsto {\rm Tr}(m_{x^2})$. Let $\alpha_i$ be the degree $i$ cohomological invariant obtained by pulling back the $i^{\rm th}$ Stiefel-Whitney class along $\varphi$. 

As a $\Het_2(k)$-module, $\Inv(\Brm {\rm S}_n)$ is freely generated by 
\[\alpha_0=1, \alpha_1, \ldots, \alpha_{\left[ n/2 \right]},\]
 where the degree of $\alpha_i$ is $i$.

Before proceeding further, let us explain how to explicitly compute the value of the cohomological invariants $\alpha_i$. 

As already said, we can associate to $E$ a quadratic form as follows: given an element $x$ of $E$, we define the quadratic form
\[ q_E(x):= {\rm Tr}(m_{x^2}) \]
Regarding $E$ as a vector space of dimension $n$, choose a basis $e_1,\dots,e_n$ of $E$ such that $q_E(x)=\sum_{i=1}^n \lambda_i x_i^2$, where $x=x_1 e_1+\dots + x_n e_n$. The $\lambda_i$ belong to $K^*$ as the form is nondegenerate. If $\sigma_i$ denotes the elementary symmetric polynomial of degree $i$ in $n$ variables, we have:
\[ \alpha_i(E)= \sigma_i(\lbrace \lambda_1 \rbrace, \dots, \lbrace \lambda_n \rbrace) \in {\rm H}_2^{i}(K) \]
where $\lbrace \lambda_j \rbrace$ are the corresponding classes in ${\rm H}_2^1(K)\simeq K^*/(K^*)^2$ and the product is the one defined in cohomology.

The multiplicative structure of the invariants $\alpha_i$ is described the following way (see \cite{GMS}*{Remark 17.4}). Given $s,r\leq \left[ n/2 \right]$, write $s=\sum_{i \in S} 2^i, r = \sum_{i \in R} 2^i$ and let $m = \sum_{i \in S\cap R} 2^i$. Then \[\alpha_s \cdot \alpha_r = \lbrace -1 \rbrace^m \cdot \alpha_{r+s-m}.\]

Let $E$ denote an \'etale algebra over of degree $n$ over an extension $K/k$ and write $\alpha_{\rm tot}= \sum_i \alpha_i$. By definition the invariants $\alpha_i$ take values in $\Het_2(K)$, and the following properties hold:

\begin{enumerate}

\item $\alpha_i(E)=0$ if $i > [n/2]+1$.
\item $\alpha_{[n/2]+1}(E)=\lbrace 2 \rbrace \cdot \alpha_{[n/2]}$ if $[n/2]+1$ is even, and $0$ otherwise.
\item $\alpha_{\rm tot}(K^{\oplus n})=1$.
\item $\alpha_{\rm tot}(K\left[x \right]/(x^2 - a))=1+\lbrace a \rbrace$.
\item $\alpha_{\rm tot}(E \times E') = \alpha_{\rm tot}(E)\alpha_{\rm tot}(E')$.

\end{enumerate}





We can now move on to our first computation:

\begin{cor}\label{cor:inv Pn}
	Let $n\geq 0$ be even and let $G$ be either ${\rm GL}_2$, ${\rm PGL}_2 \times \Gm$ or the trivial group. Then the last morphism of the exact sequence 
	\[ 0 \rightarrow A^0_G(\PP^n) \rightarrow A^0_G(\PP^n_{\rm sm}) \xrightarrow{\partial} A^0_G(\Delta^n_1)\]
 	is surjective. Moreover:
 
 \begin{itemize}
     \item If $G$ is equal to ${\rm GL}_2$ or trivial, the inclusion $\Inv({\rm S}_{n/2}) \subset \Inv(\left[ \PP^n_{\rm sm} /G \right])$ is an isomorphism. 
     \item 	If $G={\rm PGL}_2$ the cokernel of the inclusion is a free $\Het_2(k)$-module generated by the invariant $w_2$ coming from the cohomological invariants of $\Brm {\rm PGL}_2$.
     \item The pullback $\Inv(\Hcal_g) \to \Inv(\PP^{2g+2}_{\rm sm})$ is an isomorphism when $g$ is even, and when $g$ is odd it is surjective with kernel the $\Het_2(k)$-module generated by $w_2$.
 \end{itemize}

\end{cor}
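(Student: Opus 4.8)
The plan is to compute $A^0_G(\PP^n_{\rm sm})$ by induction on the even integer $n$ through the codimension-one localization sequence attached to the discriminant hypersurface $\Delta^n_1\subset\PP^n$,
\[ 0\to A^0_G(\PP^n)\to A^0_G(\PP^n_{\rm sm})\xrightarrow{\partial}A^0_G(\Delta^n_1)\to A^1_G(\PP^n)\to\cdots, \]
and to identify the three terms. Since $\PP^n$ is the projectivization of the representation of binary forms over $\Brm G$, Proposition \ref{Prop:homot inv} gives $A^0_G(\PP^n)=\Inv(\Brm G)$, which equals $\Het_2(k)$ for $G$ trivial or $\GLt$ and $\Het_2(k)\oplus w_2\cdot\Het_2(k)$ for $\PGLt$ by Lemma \ref{lm:GL2PGL2}. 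For the boundary term I use the universal homeomorphism $\PP^{n-2}_{\rm sm}\times\PP^1\to\Delta^n_1\setminus\Delta^n_2$ of Proposition \ref{prop:homeo}: combined with Proposition \ref{prop:homeo iso} and the projective bundle formula, it identifies $A^0_G(\Delta^n_1\setminus\Delta^n_2)$ with $A^0_G(\PP^{n-2}_{\rm sm})=\Inv(\Brm{\rm S}_{n-2})$, freely generated by $\alpha_0^{(n-2)},\dots,\alpha_{n/2-1}^{(n-2)}$ by the inductive hypothesis. I deliberately avoid computing $A^0_G(\Delta^n_1)$ itself (which is delicate, as $\Delta^n_2$ is a divisor inside $\Delta^n_1$): since the two schemes are irreducible with the same function field, Proposition \ref{prop:generic} makes the restriction $A^0_G(\Delta^n_1)\hookrightarrow A^0_G(\Delta^n_1\setminus\Delta^n_2)$ injective, which is all I need.

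The crux is to compute $\partial$ on the generators $\alpha_i$. At the generic point of $\Delta^n_1$ a square-free form degenerates to $\ell^2h$, so its \'etale algebra of roots splits as $E_\ell\times E_h$, where $E_h$ is the degree $n-2$ algebra pulled back from $\PP^{n-2}_{\rm sm}$ and $E_\ell=K[x]/(x^2-u\delta)$ with $\delta$ a local equation for $\Delta^n_1$ (the discriminant) and $u$ a unit. Multiplicativity of the total invariant (property (5)) and the degree-two formula (property (4)) give, at that point, $\alpha_{i+1}=\{u\delta\}\cdot\alpha_i^{(n-2)}+\alpha_{i+1}^{(n-2)}$; the second term is unramified along $\Delta^n_1$, and only the $\{\delta\}$ summand of $\{u\delta\}=\{\delta\}+\{u\}$ contributes to the residue, so Lemma \ref{lm:deriv} yields $\partial\alpha_{i+1}=\alpha_i^{(n-2)}$ for $i=0,\dots,n/2-1$ under the identification above. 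Thus $\alpha_1,\dots,\alpha_{n/2}$ map onto a free basis of $A^0_G(\Delta^n_1\setminus\Delta^n_2)$; together with the injectivity noted above this shows at once that $\partial$ is surjective and that the sequence is split short exact, with $\ker\partial=\Inv(\Brm G)$. For $G$ trivial or $\GLt$ the kernel is $\Het_2(k)\cdot\alpha_0$, so $A^0_G(\PP^n_{\rm sm})$ is freely generated by $\alpha_0,\dots,\alpha_{n/2}$ and the inclusion of $\Inv(\Brm{\rm S}_n)$ is an isomorphism.

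For $G=\PGLt$ the only change is that the kernel gains the generator $w_2$; since $w_2$ lies in $\ker\partial$ while the $\alpha_i$ $(i\ge1)$ do not, the cokernel of $\Inv(\Brm{\rm S}_n)\subset A^0_{\PGLt}(\PP^n_{\rm sm})$ is exactly the free module $w_2\cdot\Het_2(k)$ (the case $\PGLt\times\Gm$ reduces to this by Lemma \ref{lm:GL2PGL2}(3)). Finally, to pass to $\Hcal_g=[\bA^{2g+3}_{\rm sm}/G]$ I pull back along the presentation of Theorem \ref{thm:presentation} and compare $\bA^{2g+3}_{\rm sm}$ with $\PP^{2g+2}_{\rm sm}$ through the scaling $\Gm$-torsor, applying Proposition \ref{Prop:homot inv} and Corollary \ref{cor:Gm}. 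When $g$ is even ($G=\GLt$) this comparison leaves the computation unchanged and gives the stated isomorphism, whereas when $g$ is odd ($G=\PGLt\times\Gm$) the $\PGLt$-factor contributes the class $w_2$, which dies on the scheme $\PP^{2g+2}_{\rm sm}$, so the pullback is surjective with kernel $w_2\cdot\Het_2(k)$.

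I expect the residue identity $\partial\alpha_{i+1}=\alpha_i^{(n-2)}$ to be the main obstacle: one must describe the splitting of the root algebra over the generic point of the singular discriminant, control the unit $u$ and the $G$-semi-invariance of $\delta$ so that the residue computation is legitimate, and check that the identification of the boundary term with $A^0_G(\PP^{n-2}_{\rm sm})$ respects the $G$-action. The secondary delicate point is the $\bA^{2g+3}_{\rm sm}$-versus-$\PP^{2g+2}_{\rm sm}$ comparison in the last bullet, where the parity of the weight with which the scalars act is exactly what separates the even and odd cases.
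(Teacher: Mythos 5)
Your overall skeleton (localization sequence, induction on $n$, identification of the boundary term via Propositions \ref{prop:homeo} and \ref{prop:homeo iso}) matches the paper, but your key step is genuinely different: you compute $\partial\alpha_{i+1}=\alpha_i^{(n-2)}$ by an explicit residue calculation at the generic point of the discriminant (splitting the root algebra over the henselization as $E_\ell\times E_h$ with $E_\ell=K[x]/(x^2-u\delta)$), whereas the paper never computes any residue of the $\alpha_i$: it uses the a priori injection $\Inv(\Brm{\rm S}_n)\hookrightarrow A^0_G(\PP^n_{\rm sm})$ from Proposition \ref{prop: sm Nis}, observes that $\partial$ lowers degree by one, and deduces by a degree count that the boundary matrix is unitriangular, hence $\partial$ is surjective and there is no room for further classes. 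Your residue route is the classical one (it is how the invariants of $\Brm{\rm S}_n$ are analyzed in GMS) and is workable, but it carries exactly the burdens you flag; the paper's counting argument buys you surjectivity for free from the mere existence of the subring $\Inv(\Brm{\rm S}_n)$.

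There is, however, a genuine gap in your treatment of $G=\PGLt\times\Gm$. You identify $A^0_G(\Delta^n_1\setminus\Delta^n_2)\simeq A^0_G(\PP^{n-2}_{\rm sm}\times\PP^1)$ with $A^0_G(\PP^{n-2}_{\rm sm})$ ``by the projective bundle formula'' and then invoke the inductive hypothesis to call it $\Inv(\Brm{\rm S}_{n-2})$, free on the $\alpha_i^{(n-2)}$. For $\PGLt\times\Gm$ the $\PP^1$ factor is the universal conic over $\Brm\PGLt$, \emph{not} the projectivization of a vector bundle, so Proposition \ref{Prop:homot inv} does not apply; and your own inductive hypothesis says $A^0_{\PGLt\times\Gm}(\PP^{n-2}_{\rm sm})$ contains the extra generator $w_2$, so it is not freely generated by the $\alpha_i^{(n-2)}$ alone — the two halves of your sentence contradict each other. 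The correct statement, which the paper proves, is $A^0_G(\PP^{n-2}_{\rm sm}\times\PP^1)=A^0_G(\PP^{n-2}_{\rm sm})/\langle w_2\rangle$, and establishing it requires a separate inductive input (the vanishing of the pushforward $i_*A^0_G(\Delta^m_1)\to A^1_G(\PP^m)$, fed into \cite{PirCohHypThree}*{Cor. 19}); without it your induction does not close in the $\PGLt$ case, which is precisely the case where the second bullet has content. A smaller issue: in the last bullet you assert that for $g$ even the comparison ``gives the stated isomorphism'' $\Inv(\Hcal_g)\simeq\Inv(\PP^{2g+2}_{\rm sm})$; read literally this is inconsistent with the later existence of $\beta_{g+2}$ — the intended comparison (as in the paper's proof) is between $A^0(\PP^{2g+2}_{\rm sm})$ and $A^0_G(\PP^{2g+2}_{\rm sm})$, not $\Inv(\Hcal_g)=A^0_G(\bA^{2g+3}_{\rm sm})$, and your detour through the scaling $\Gm$-torsor conflates the two.
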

\begin{proof}
Let us first assume $G={\rm GL}_2$ or $G$ is trivial. The proof is identical. We proceed by induction on the even integer $n$, the case $n=0$ being trivial. By \cite[3.3, 3.4]{PirCohHypEven} we know that $A^0_G(\Delta_1^n) \simeq A^0_G(\PP^{n-2}_{\rm sm}\times \PP^1)$, which by the inductive hypothesis and the projective bundle formula is isomorphic to $\Inv(\Brm {\rm S}_{n/2})$, which is freely generated as a $\Het_2(k)$-module by the Stiefel-Whitney classes $1, \alpha_1, \ldots, \alpha_{\left[ (n-2)/2\right]}$.

Note that  and $A^0_G(\PP^n)$ is trivial as $\left[\PP^n/G\right]$ is a projective bundle over $\Brm G$, and $\Inv(\Brm {\rm GL}_2)=\Inv({\rm Spec}(k)) = \Het_2(k)$ by the first point of Lemma \ref{lm:GL2PGL2}. Using the fact that $A^0_G(\PP^n_{\rm sm})$ has to contain the cohomological invariants of $\Brm {\rm S}_{n}$, we see that the cokernel of $A^0_G(\PP^n) \rightarrow A^0_G(\PP^n_{\rm sm})$ must contain a free module generated by elements $y_1, \ldots, y_{\left[ n/2\right]}$, of degree $\mathrm{deg}(y_i)=i$.

The map \[\del:A^0_G(\PP^n_{\rm sm}) \longrightarrow A^0_G(\Delta_1^n)\] lowers degree by one. As the images of the $y_i$ must be linearly independent, we must have that $\del(y_1)=1$, $\del(y_2)=a_{2,1} + \alpha_1$ with $a_{2,1} \in {\rm H}_2^1(k)$, and in general $\del(y_i) = \sum_{j<i-1} a_{i,j} \alpha_j + \alpha_{i-1}$ with $a_{i,j} \in {\rm H}_2^{i-j-1}(k)$. It follows immediately that it must be surjective. An easy consequence of this is that there can be no other element in $A^0_G(\PP^n_{\rm sm})$, otherwise the kernel of $\del$ would have to be bigger than $A^0_G(\PP^n)=\Het_2(k)$.

Now assume that $G={\rm PGL}_2\times \Gm$.

Again, the map from $\left[\PP^n/G\right]$ to $\Brm G$ induces an isomorphism on cohomological invariants, being a projective bundle. In particular, by the second and third point of Lemma \ref{lm:GL2PGL2}, there is a submodule of $A^0_G(\PP^n_{\rm sm})$ that is isomorphic to $\Het_2(k) \cdot w_2$, and this submodule is in the kernel of $\del$. Now let us prove that $\Het_2(k) \cdot w_2 \cap \langle 1, \alpha_1, \ldots \alpha_{n/2} \rangle = 0$. By \cite[Cor. 19]{PirCohHypThree} if we take the pullback 
\[
\Inv(\left[ \PP^n_{\rm sm} /G \right]) \to \Inv(\left[ (\PP^n_{\rm sm}\times \PP^1)/G\right])
\]
the element $w_2$ maps to zero, because $\left[ (\PP^n\times \PP^1)/G)\right]$ is a projective bundle over $\left[ \PP^1/G\right]$, whose invariants are trivial by \cite{PirCohHypThree}*{Prop. 16}, while the map is injective on the elements coming from $\Inv(\Brm {\rm S}_n)$ as the composition 
\[
\PP^1 \times \PP^n_{\rm sm} \to \PP^n_{\rm sm} \to \Brm {\rm S}_n
\]
is still smooth-Nisnevich. This proves our claim. 

If $n=2$, $\Delta^2_{1}$ is universally homeomorphic to $\PP^1$, so by \cite{PirCohHypThree}*{Prop. 16} $A^0_G(\Delta^2_1)$ is trivial and the pushforward $i_*A^0_G(\Delta^2_1) \to A^1_G(\PP^2)$ is zero \cite[Prop. 23]{PirCohHypThree}.

The requirement that the pushforward $ A^0_G(\Delta_{1,m}) \to A^1_G(\PP^{m})$ be trivial is implied by the surjectivity of $\del$, and as seen above it is true for $m=2$, so we can assume by induction that it is true for $m=n-2i, i > 0$. In particular, by \cite[Cor. 19]{PirCohHypThree}, we have that 
\[A^0_G(\PP^{n-2i}_{\rm sm} \times \PP^1)=A^0_G(\PP^{n-2i}_{\rm sm})/\langle w_2 \rangle\]
and the inductive hypothesis implies that
\[A^0_G(\PP^{n-2i}_{\rm sm})/\langle w_2 \rangle = \Inv(\Brm {\rm S}_{n-2i}).\]
We want to show that $A^0_G(\PP^n_{\rm sm}) \xrightarrow{\partial} A^0_G(\Delta_1^n)$ is surjective. By the reasoning above \[A^0_G(\Delta^n_1) \subset A^0_G(\Delta^n_1\setminus \Delta^n_2) \simeq A^0_G(\PP^{n-2}_{\rm sm} \times \PP^1)=\Inv(\Brm {\rm S}_{n-2}).\] 
We proved that the cokernel of the first map in the exact sequence contains a free $\Het_2(k)$-module generated by elements $y_1, \ldots, y_{n/2}$ of degree ${\rm deg}(y_i)=i$. The $\Het_2(k)$-module $\Inv(\Brm {\rm S}_{n-2})$ is freely generated by $1,\alpha_1,\ldots, \alpha_{n/2-1}$, so we are in the exact same situation as in the $G={\rm GL}_2$ case and the proof is identical. 

Finally, the comparison between $A^0(\PP^{2g+2}_{\rm sm})$ and $A^0_G(\PP^{2g+2}_{\rm sm})$ is immediate by the results above, the factorization 
\[
\PP^{2g+2}_{\rm sm} \to \left[ \PP^{2g+2}_{\rm sm} / G \right] \to {\rm BS}_n 
\]

and the fact that $w_2$ pulls back to zero when $g$ is odd.
\end{proof}

As the reader can see by comparing the proof above with the proofs of the same statements in \cite{PirCohHypEven}*{Sec. 4}, \cite{PirCohHypThree}*{Sec. 3} and \cite{DilCohHypOdd}*{Sec. 5-6}, having prior knowledge of a large subalgebra of the cohomological invariants of our stacks not only allows us to drop the condition of the base field being algebraically closed, but it also makes the proof substantially easier.

In the next Section we will explicitly construct another invariant of $\Hcal_g$ when $g$ is even.
This will allow us to conclude the generalization of the results of \cite{PirCohHypEven} in Section \ref{sec:mult}. 

\section{The last invariant}\label{sec:g even}
For the rest of the Section, we assume that $g$ is even. Consider the open subset $\overline{U}_0=\left\{x_0 \neq 0 \right\}$ inside $\PP^{2g+2}_{\rm sm}$, and let $U_0$ be its preimage in $\bA^{2g+3}_{\rm sm}$. The $\Gm$-torsor $U_0\to\overline{U}_0$ is clearly trivial. Consequently by Corollary \ref{cor:Gm} we have 
\[
A^0(U_0)=A^0(\overline{U}_0)\times A^0(\Gm) = A^0(\overline{U}_0) \oplus \tau \cdot A^0(\overline{U}_0)
\]
where $\tau$ is the cohomological invariant that sends a $K$-point $(x_0:x_1:\dots:x_{2g+2})$ to $\lbrace x_0 \rbrace$ in ${\rm H}_2^1(K)\simeq K^*/(K^*)^2$.
The multiplicative structure is defined by the single additional relation $\tau^2 = \lbrace -1 \rbrace \cdot \tau$, coming from the relation $\lbrace a, -a \rbrace =0$ in Milnor's $\rm K$-theory \cite{Rost}*{Sec. 1}.

The invariant $\tau$ clearly does not extend to a cohomological invariant of $\bA^{2g+3}_{\rm sm}$, but we claim that the element $\beta_{g+2} := \tau \cdot \alpha_{g+1}$ does.

\begin{prop}\label{split}
Let $g\geq 2$ be even. Then the element $\beta_{g+2}$ defined above extends to a cohomological invariant of $\bA^{2g+3}_{\rm sm}$. Moreover, $\beta_{g+2}$ is $\Het_2(k)$-linearly independent from the invariants coming from $\Brm {\rm S}_{2g+2}$.
\end{prop}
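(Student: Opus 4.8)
The plan is to produce $\beta_{g+2}$ as the extension of $\tau\cdot\alpha_{g+1}$ across the divisor $\{x_0=0\}$, by showing its boundary vanishes. Let $D_0\subset\bA^{2g+3}_{\rm sm}$ be the closed subscheme cut out by $x_0=0$, so that $U_0=\bA^{2g+3}_{\rm sm}\setminus D_0$ is its open complement and $D_0$ has codimension one. Both $\bA^{2g+3}_{\rm sm}$ and $D_0$ are smooth and irreducible: indeed $D_0$ consists of the square-free forms divisible by $\lambda_2$ but not $\lambda_2^2$, which is an open subset of the affine space of forms of degree $2g+1$. By the localization sequence, the piece
\[
A^0(\bA^{2g+3}_{\rm sm}) \to A^0(U_0) \xrightarrow{\partial} A^0(D_0)
\]
is exact in the middle, so $\beta_{g+2}\in A^0(U_0)$ extends to $A^0(\bA^{2g+3}_{\rm sm})=\Inv(\bA^{2g+3}_{\rm sm})$ exactly when $\partial(\beta_{g+2})=0$.

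I would then compute this boundary. Since $x_0$ is a global equation for $D_0$ and $\tau=\{x_0\}$ on $U_0$, Lemma \ref{lm:deriv} gives
\[
\partial(\beta_{g+2})=\partial(\{x_0\}\cdot\alpha_{g+1})=\alpha_{g+1}|_{D_0}.
\]
So it suffices to show that $\alpha_{g+1}$ restricts to zero on $D_0$. Geometrically, $x_0=0$ means the form vanishes at $\infty=[1:0]$, a $K$-rational point of $\PP^1$; hence over any field $K$ the étale algebra attached to a $K$-point of $D_0$ splits as $K\times E'$ with $E'$ of degree $2g+1$.

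The crux is precisely this vanishing $\alpha_{g+1}|_{D_0}=0$, and it is where the hypothesis that $g$ is even is used. By multiplicativity (property (5)) together with $\alpha_{\rm tot}(K)=1$, we get $\alpha_{\rm tot}(K\times E')=\alpha_{\rm tot}(E')$, so $\alpha_{g+1}$ is computed on the degree-$(2g+1)$ algebra $E'$. Here $[(2g+1)/2]=g$, so $\alpha_{g+1}=\alpha_{[(2g+1)/2]+1}$ is governed by property (2): it equals $\{2\}\cdot\alpha_g$ when $g+1$ is even and vanishes otherwise. As $g$ is even, $g+1$ is odd, so $\alpha_{g+1}(E')=0$ for every $K$-point, whence $\alpha_{g+1}|_{D_0}=0$ as an invariant of $D_0$. (For $g$ odd the same computation returns $\{2\}\cdot\alpha_g\neq 0$, which is exactly why the construction breaks down there.) This yields $\partial(\beta_{g+2})=0$ and hence the extension; Proposition \ref{prop:generic} moreover makes the extension unique.

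For linear independence I would restrict back to $U_0$ and use the decomposition $A^0(U_0)=A^0(\overline{U}_0)\oplus\tau\cdot A^0(\overline{U}_0)$. Since $\bA^{2g+3}_{\rm sm}\to\Brm {\rm S}_{2g+2}$ factors through $\PP^{2g+2}_{\rm sm}\supset\overline{U}_0$, each $\alpha_i$ restricts into the first summand, while $\beta_{g+2}|_{U_0}=\tau\cdot\alpha_{g+1}$ lies in the second with $\tau$-coefficient $\alpha_{g+1}|_{\overline{U}_0}$. This coefficient is nonzero, because $\alpha_{g+1}$ is a free generator of $\Inv(\Brm {\rm S}_{2g+2})$ and the pullbacks $\Inv(\Brm {\rm S}_{2g+2})\hookrightarrow\Inv(\PP^{2g+2}_{\rm sm})\hookrightarrow\Inv(\overline{U}_0)$ are injective (by the injectivity statements following Proposition \ref{prop: sm Nis} and by Proposition \ref{prop:generic}). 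Consequently any identity $\beta_{g+2}=\sum_i c_i\alpha_i$ with $c_i\in\Het_2(k)$, restricted to $U_0$ and compared in the $\tau$-summand, would force $\alpha_{g+1}|_{\overline{U}_0}=0$, a contradiction. Hence $\beta_{g+2}$ is $\Het_2(k)$-linearly independent from the invariants of $\Brm {\rm S}_{2g+2}$.
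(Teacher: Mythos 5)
Your proposal is correct and follows essentially the same route as the paper: identify $\partial(\tau\cdot\alpha_{g+1})$ with $\alpha_{g+1}|_{\{x_0=0\}}$ via Lemma \ref{lm:deriv}, observe that on that locus the \'etale algebra splits off a rational point, and kill the restriction using the multiplicativity of $\alpha_{\rm tot}$ together with property (2) and the parity of $g+1$. Your explicit treatment of the linear independence via the $\tau$-component of the decomposition $A^0(U_0)=A^0(\overline{U}_0)\oplus\tau\cdot A^0(\overline{U}_0)$ is exactly what the paper leaves implicit.
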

\begin{proof}
We have an exact sequence 
\[ 
0 \rightarrow A^0(\bA^{2g+3}_{\rm sm}) \rightarrow A^0(\overline{U}_0) \oplus \tau \cdot A^0(\overline{U}_0) \xrightarrow{\partial} A^0(V_0)
\]
where $V_0$ is the complement to $U_0$. We claim that the element $\tau \cdot \alpha_{g+1}$ maps to zero. As by Lemma \ref{lm:deriv} we have $\partial(\tau \cdot \alpha) = \alpha_{\mid V_0}$ for any $\alpha$ coming from $A^0(\bA^{2g+3}_{\rm sm})$, this is equivalent to saying that $\alpha_{g+1}$ becomes zero when restricted to $V_0$.

Consider the universal conic $\mathcal{C'}/\bA^{2g+3}_{\rm sm} \simeq \bA^{2g+3}_{\rm sm} \times \PP^1$. Restricting to the open subset $U_0$ is equivalent to requiring the Weierstrass divisor of a curve $C/S$ to not contain the divisor at infinity $S \times \infty$. Conversely, given a curve mapping to the complement $V_0$, the Weierstrass divisor will always have a section $S \rightarrow W_{C}$ given by $S \rightarrow S \times \infty$. In particular, given a field $K$ and a curve $C/K$ lying over $V_0$, the \'etale algebra $W_C/K$ will split as $W'_C \times K/K$.

Now we apply property $(5)$ of the Stiefel-Whitney classes. Looking at the part of degree $g+1$ we get
\[
\alpha_{g+1}(W'_C \times K)= \sum_{i+j=g+1} \alpha_i(W'_C) \cdot \alpha_{j}(K).
\]
By properties $(1), (2)$ the right hand side is zero, concluding our proof.

\end{proof}

Now we want to prove that this element glues to a cohomological invariant of $\Hcal_{g}$. We will show two different approaches to the problem. 

The first is a straight up computation that reduces the problem to a maximal torus inside ${\rm GL}_2$. The second is more subtle: we produce an invariant on a projective bundle over $\bA^{2g+3}_{\rm sm}$ which is trivially equivariant, but which we cannot a priori show to be nonzero. Then we show that after restricting to a locally closed subset it is equal to $\beta_{g+2}$, proving that it is independent from the invariants coming from $\Brm {\rm S}_{2g+2}$ (and in particular nonzero).

\subsection{First proof: reduction to the torus action}

\begin{lm}\label{CellDec}
Let $X \xrightarrow{f} Y$ be a map of algebraic spaces such that Zariski locally on $Y$ we have $X = Y \times Z$, where $Z$ is a smooth proper scheme admitting a cell decomposition $Z = \sqcup_{i\in I}(\sqcup_{j \in J_i} \bA^i)$. Then we have 
\[
A^{*}(X) \simeq A^{*}(Y) \otimes \mathrm{CH}^{\bullet}(Z).
\]
\end{lm}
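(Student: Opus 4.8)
The plan is to prove the isomorphism first for a genuinely trivial bundle $X=Y\times Z$ by inducting on the cells, and then to glue the local computations over a trivializing cover of $Y$. Throughout I use the dimension-indexed groups $A_p(-)$, since the intermediate spaces appearing in the filtration need not be smooth or irreducible; the localization sequence of Section 1 is valid for an arbitrary closed subspace and its open complement, and homotopy invariance (the affine-bundle case of Proposition \ref{Prop:homot inv}, applied in all degrees) gives $A_p(W\times\bA^j)\simeq A_{p-j}(W)$.

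Since a cell decomposition is filtrable, I order the cells by dimension and set $Z^{(j)}=\sqcup_{i\le j}(\sqcup_{\alpha\in J_i}\bA^i)$, obtaining a chain of closed subschemes $\emptyset=Z^{(-1)}\subset Z^{(0)}\subset\cdots\subset Z^{(d)}=Z$ with $Z^{(j)}\setminus Z^{(j-1)}=\sqcup_{\alpha\in J_j}\bA^j$ and $\overline{c}_\alpha\setminus c_\alpha\subset Z^{(j-1)}$ for each $j$-cell $c_\alpha$. For the trivial bundle I put $X^{(j)}=Y\times Z^{(j)}$. The localization sequence for $X^{(j-1)}\hookrightarrow X^{(j)}$ with open complement $\sqcup_{\alpha}(Y\times\bA^j)$ reads
\[
\cdots\to A_p(X^{(j-1)})\to A_p(X^{(j)})\xrightarrow{r}\bigoplus_{\alpha\in J_j}A_{p-j}(Y)\xrightarrow{\partial}A_{p-1}(X^{(j-1)})\to\cdots .
\]
The key step is to check that $r$ is split surjective, which forces $\partial=0$ and splits the sequence. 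For this I exhibit explicit lifts: writing $p\colon Y\times Z\to Y$ and $q\colon Y\times Z\to Z$ for the projections and $[\overline{c}_\alpha]\in\mathrm{CH}^{\bullet}(Z)$ for the class of the closure of the $\alpha$-th cell, the element $p^*a\cdot q^*[\overline{c}_\alpha]$ restricts on the open stratum $Y\times(Z\setminus Z^{(j-1)})$ to $a$ times the fundamental class of $\bA^j_\alpha$, precisely because $\overline{c}_\alpha\setminus c_\alpha\subset Z^{(j-1)}$. Induction on $j$ then yields $A_p(X)=\bigoplus_{j,\alpha}A_{p-j}(Y)$, and the displayed lifts identify the resulting isomorphism with the exterior-product map $a\otimes[\overline{c}_\alpha]\mapsto p^*a\cdot q^*[\overline{c}_\alpha]$; passing back to codimension indexing gives $A^{*}(X)\simeq A^{*}(Y)\otimes\mathrm{CH}^{\bullet}(Z)$.

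To remove the hypothesis of global triviality I would run the argument above over each member $U$ of a Zariski cover of $Y$ that trivializes $f$, producing isomorphisms $\phi_U\colon A^{*}(U)\otimes\mathrm{CH}^{\bullet}(Z)\xrightarrow{\sim}A^{*}(f^{-1}U)$. On an overlap the two chosen trivializations differ by an automorphism of $Z$ over $U\cap U'$; since $\mathrm{CH}^{\bullet}(Z)$ is a discrete invariant and the transition automorphisms move in a connected family (as is the case in our applications), the induced action on $\mathrm{CH}^{\bullet}(Z)$ is trivial, so $\phi_U$ and $\phi_{U'}$ agree on $U\cap U'$. The local isomorphisms therefore glue, and because Chow groups with coefficients form Zariski sheaves—so that being an isomorphism may be checked on a cover—I obtain the desired global isomorphism.

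The main obstacle is exactly this last gluing: there is no global projection $X\to Z$, so the cell classes $q^*[\overline{c}_\alpha]$ used to split the localization sequences exist only locally on $Y$, and one must verify that the monodromy of the bundle acts trivially on $\mathrm{CH}^{\bullet}(Z)$ before the local splittings can be patched. By contrast, the inductive vanishing of the boundary maps $\partial$ in the trivial case is routine once the explicit lifts are in place; it is the compatibility of these lifts across a non-trivial bundle that requires the care above.
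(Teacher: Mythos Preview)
Your treatment of the trivial case $X=Y\times Z$ is essentially the paper's argument: both induct over the cell filtration, use the localization sequence, and identify the result with the exterior product. The paper compares two long exact sequences rather than exhibiting an explicit splitting, but the content is the same.

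The gap is in your passage to the locally trivial case. Two of the assertions you rely on are not available in the stated generality. First, Chow groups with coefficients do \emph{not} form a Zariski sheaf in degrees $>0$; only $A^0$ has a sheaf property (and that is for the smooth-Nisnevich topology, Theorem \ref{thm:sheaf}). So even if your local isomorphisms $\phi_U$ agreed on overlaps, you would have no mechanism to glue them into a global isomorphism: there is no global map against which to check them. Second, the claim that the transition automorphisms ``move in a connected family'' and hence act trivially on $\mathrm{CH}^{\bullet}(Z)$ is an extra hypothesis not present in the statement, and in any case the implication from connectedness to triviality of the action on Chow classes would itself need justification.

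The paper resolves this by reversing the logic: instead of gluing local isomorphisms, it first produces a \emph{global} map $\mathrm{CH}^{\bullet}(Z)\otimes A^{*}(Y)\to A^{*}(X)$ and then checks it is an isomorphism locally. The global map exists because the analogous statement for ordinary Chow groups is already known \cite{EG97}*{Prop.~1} (this is where smoothness and properness of $Z$ are used), so $\mathrm{CH}^{\bullet}(Z)\otimes\ZZ/2\ZZ$ sits inside $A^{*}(X)$ as honest global cycle classes; multiplying by the pullback of $A^{*}(Y)$ gives the map. One then takes a trivializing open $U\subset Y$ with complement $V$, assumes the result on $V$ by Noetherian induction, and compares the localization long exact sequences for $(V,U)$ and $(f^{-1}V,f^{-1}U)$ via the five lemma. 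This sidesteps both the sheaf issue and the monodromy issue entirely.
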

\begin{proof}
We begin with the case where $X=Y \times Z$, proceeding by induction on the dimension of $Z$. Note that at this point we do not need the proper and smooth assumption on $Z$. 

If the dimension of $Z$ is zero, the statement is trivially true. Now let the dimension of $Z$ be equal to $n$, and let $Z' \subset Z$ be the union of all lower dimensional components, which is a closed subset of $Z$. For any $V \subseteq Z$ there is a map $A^{*}(Y) \otimes \mathrm{CH}^{\bullet}(V) \rightarrow A^{*}(X)$ given by $(a, b) \rightarrow a \times b$. We have a long exact sequence 
\[
\ldots \rightarrow A^s(Y \times Z) \rightarrow A^s(Y \times (\sqcup_{j \in J_n} \bA^n)) \xrightarrow{\partial} A^s(Y \times Z') \rightarrow A^{s+1}(Y \times Z) \rightarrow \ldots 
\]
As the Chow groups with coefficients of an affine bundle are isomorphic to those of the base we have $A^s(Y \times (\sqcup_{j \in J_n} \bA^n))\simeq A^s(Y) \otimes \ZZ^{\# J_n} \simeq A^s(Y) \otimes {\rm CH}^{\bullet}((\sqcup_{j \in J_n} \bA^n))$. Then we can conclude by comparing the long exact sequence above and the exact sequence
\[
\ldots \rightarrow \underset{i+j=s}{\oplus} A^i(Y) \otimes {\rm CH}^j(Z) \rightarrow \underset{i+j=s}{\oplus} A^i(Y) \otimes \ZZ^{\# J_n} \xrightarrow{\partial}\underset{i+j=s}{\oplus} A^i(Y) \otimes {\rm CH}^j(Z') \ldots  
\]
For the general case, note that we know the result to hold true for ordinary Chow groups \cite[Prop. 1]{EG97} (here is where the assumptions on $Z$ are needed). Thus we have a subring of $A^{*}(X)$ isomorphic to ${\rm CH}^{\bullet}(Z) \otimes \ZZ/2\ZZ$, and by taking multiplication this induces a map ${\rm CH}^{*}(Z) \otimes A^{*}(Y) \rightarrow A^{*}(X)$. 

Now let $U \subset Y$ be a Zariski open subset over which the fibration is trivial, and assume by induction that the formula holds on the complement $V$. The map ${\rm CH}^{\bullet}(Z) \otimes A^{*}(Y) \rightarrow A^{*}(X)$ is compatible with the isomorphisms $A^{*}(f^{-1}(V)) \simeq A^{*}(V) \otimes \mathrm{CH}^{\bullet}(Z)$, $A^{*}(f^{-1}(U)) \simeq A^{*}(U) \otimes \mathrm{CH}^{\bullet}(Z)$, so we can compare the two corresponding long exact sequences and conclude by the five lemma as above.
\end{proof}

\begin{prop}\label{Torus}
Let $G$ be a split, smooth, special algebraic group, and let $T\subseteq G$ be a maximal torus. Then for any $G$-scheme $X$ we have 
\[A^0_G(X) \simeq A^0_T(X).\]
\end{prop}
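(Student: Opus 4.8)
The plan is to compare $G$ and $T$ in two steps, interposing a Borel subgroup $B\subseteq G$ with $T\subseteq B$. Write $B=T\ltimes U_B$, where $U_B=R_u(B)$ is the unipotent radical, so that $B/T\simeq U_B$ is an affine space while $G/B$ is smooth, projective, and (being the flag variety of the reductive quotient of $G$) carries the Bruhat cell decomposition into affine cells. The point is that the $G/B$-part is governed by Lemma \ref{CellDec} and the $U_B$-part by homotopy invariance.

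First I would compare $G$ and $B$ through the flag variety. Choose a representation $V_i$ and its free locus $U_i$ as in the construction of equivariant approximations, picked so that $G$ (hence also $B$) acts freely with complement of codimension $>1$; set $Y=(U_i\times X)/G$ and $X'=(U_i\times X)/B$. The natural map $f\colon X'\to Y$ is the $G/B$-bundle associated to the $G$-torsor $U_i\times X\to Y$. Since $G$ is special this torsor is Zariski-locally trivial, so Zariski-locally on $Y$ we have $X'=Y\times(G/B)$, and Lemma \ref{CellDec} applies with $Z=G/B$. It yields $A^{*}(X')\simeq A^{*}(Y)\otimes\mathrm{CH}^{\bullet}(G/B)$. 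Restricting to codimension zero and using that $G/B$ is connected, so $\mathrm{CH}^0(G/B)=\ZZ$, the decomposition collapses to $A^0(X')\simeq A^0(Y)$, that is, $A^0_B(X)\simeq A^0_G(X)$.

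Next I would compare $B$ and $T$. The map of stacks $[X/T]\to[X/B]$ has fibres $B/T\simeq U_B$. Since $B$ is a connected split solvable group it is special, so on equivariant approximations the corresponding bundle is Zariski-locally trivial; filtering $U_B$ by a series of $T$-stable subgroups with successive quotients $\mathbb{G}_a$ exhibits $[X/T]\to[X/B]$ as an iterated affine bundle. By the affine-bundle case of Proposition \ref{Prop:homot inv} each stage is an isomorphism on cohomological invariants, and since $\Inv([X/H])=A^0_H(X)$ this gives $A^0_B(X)\simeq A^0_T(X)$. Composing the two isomorphisms produces $A^0_G(X)\simeq A^0_T(X)$.

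The main obstacle is the flag-variety step: one must verify that the hypotheses of Lemma \ref{CellDec} hold at the level of equivariant approximations, namely that specialness of $G$ upgrades the \'etale-local triviality of the $G/B$-bundle $X'\to Y$ to Zariski-local triviality so that it is genuinely a local product, and one must keep track of the grading in order to retain only the $\mathrm{CH}^0$ factor in codimension zero. By contrast, the passage from $B$ to $T$ is routine once one records that $B/T$ is an affine space and invokes homotopy invariance.
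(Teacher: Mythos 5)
Your proposal is correct and follows essentially the same route as the paper: interpose a Borel $T\subseteq B\subseteq G$, handle the $G/B$-bundle with Lemma \ref{CellDec} (using that $G/B$ is smooth, proper and cellular, and that specialness gives Zariski-local triviality), and handle the $B/T$-bundle via homotopy invariance since $B/T$ is an affine space. The extra details you supply (collapsing to the $\mathrm{CH}^0$ factor in degree zero, filtering $U_B$ by copies of $\mathbb{G}_a$) are exactly the points the paper leaves implicit.
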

\begin{proof}
First, note that every special algebraic group is affine and connected \cite{Ser}*{Sec. 4.1}. After picking an equivariant approximation for $\left[X/G \right]$, we may assume that $\left[X/G \right]$ is an algebraic space. Let $T \subseteq B \subseteq G$ be a Borel subgroup. The map $\left[X /T \right] \rightarrow \left[ X/ G \right]$ factors as
\[
\left[X /T \right] \xrightarrow{f} \left[X /B \right] \xrightarrow{g} \left[ X/G \right]
\]
where the map $g$ is a $G/B$ bundle, and $f$ is a $B/T$ bundle. Both bundles are Zariski locally trivial due to the fact that $G$ is special. It's well known that $G/B$ is smooth, proper and admits a cell decomposition, while $B/T$ is an affine space, see for example \cite{MilGrp}*{Ch. 21}. Then we can use Lemma \ref{CellDec} and Proposition \ref{Prop:homot inv} to conclude. 
\end{proof}

\begin{rmk}
Lemma \ref{CellDec} and Proposition \ref{Torus} are true for any choice of $\ell$, not just $\ell=2$; it's easy to check that the proofs never use the specific choice of coefficients.
\end{rmk}

\begin{prop}\label{prop:glue}
Let $g\geq 2$ be an even integer. Then the cohomological invariant $\beta_{g+2}$ of Proposition \ref{split} glues to an invariant of $\Hcal_{g}$.
\end{prop}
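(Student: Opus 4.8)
The plan is to reduce the ${\rm GL}_2$-equivariance of $\beta_{g+2}$ to equivariance under the diagonal maximal torus via Proposition \ref{Torus}, and then to verify torus-invariance by a short weight computation whose conclusion hinges on $g$ being even. Write $X=\bA^{2g+3}_{\rm sm}$. Since ${\rm GL}_2$ is special, $X\to\Hcal_g=[X/{\rm GL}_2]$ is smooth-Nisnevich, so by Theorem \ref{thm:sheaf} the pullback $\Inv(\Hcal_g)=A^0_{{\rm GL}_2}(X)\hookrightarrow A^0(X)$ is injective, and the goal is exactly to show that the globally defined class $\beta_{g+2}$ of Proposition \ref{split} lies in this image. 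Letting $T\subset {\rm GL}_2$ be the diagonal torus, I would note that $T$ is special too, so $A^0_T(X)\hookrightarrow A^0(X)$ is also injective, and that the isomorphism $A^0_{{\rm GL}_2}(X)\xrightarrow{\sim}A^0_T(X)$ of Proposition \ref{Torus} (pullback along $[X/T]\to[X/{\rm GL}_2]$) is compatible with both embeddings, since both composites into $A^0(X)$ coincide with the restriction along $X\to[X/{\rm GL}_2]$. Hence the two subgroups have the same image in $A^0(X)$, and it suffices to prove that $\beta_{g+2}$ is $T$-invariant.

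For this I would use the sheaf property once more: applied to the $T$-torsor $X\to[X/T]$, Theorem \ref{thm:sheaf} identifies $A^0_T(X)$ with the equalizer of the two maps $a^*,p^*\colon A^0(X)\rightrightarrows A^0(T\times X)$ induced by the action $a$ and the projection $p$. So $\beta_{g+2}$ is $T$-invariant iff $a^*\beta_{g+2}=p^*\beta_{g+2}$. As $T\times X$ is smooth and irreducible, Proposition \ref{prop:generic} lets me test this equality on the dense open $T\times U_0$, where $U_0=\{x_0\neq0\}$; since $U_0$ is $T$-invariant, both $a$ and $p$ carry $T\times U_0$ into $U_0$, on which $\beta_{g+2}$ restricts to $\tau\cdot\alpha_{g+1}$.

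The computation then splits into two pieces. The class $\alpha_{g+1}$ is pulled back from $\Brm {\rm S}_{2g+2}$ and this pullback factors through $[X/T]$, so it is already $T$-invariant and $a^*\alpha_{g+1}=p^*\alpha_{g+1}$. For $\tau$, writing a diagonal element as ${\rm diag}(s,t)$ and unwinding the ${\rm GL}_2$-action on binary forms of degree $2g+2$, the coefficient $x_0$ transforms by $x_0\mapsto s^{-g-2}t^{g}x_0$, so that $a^*\tau-p^*\tau=\{s^{-g-2}t^{g}\}$ in ${\rm H}^1_2(k(T\times U_0))$. Because $g$ is even both exponents are even, $s^{-g-2}t^{g}$ is a square, and this class vanishes; therefore $a^*(\tau\cdot\alpha_{g+1})=p^*(\tau\cdot\alpha_{g+1})$ on $T\times U_0$, hence $a^*\beta_{g+2}=p^*\beta_{g+2}$ everywhere, giving the desired $T$-invariance and, by the first paragraph, the descent to $\Hcal_g$.

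I expect the only genuinely delicate points to be bookkeeping rather than ideas: making sure the torus reduction of Proposition \ref{Torus} is compatible with the two embeddings into $A^0(X)$, so that ``equal images'' is a legitimate conclusion, and handling the fact that $\tau$ itself is defined only over $U_0$ — which is precisely why I would phrase the invariance check as the generic equality $a^*=p^*$ on $T\times U_0$ rather than trying to descend $\tau$ directly. The substantive point is the parity computation: it is the evenness of $g$ alone that makes $s^{-g-2}t^{g}$ a square, and for $g$ odd the same class equals $\{s\}+\{t\}\neq0$, which explains structurally why this construction of the last invariant fails in the odd case.
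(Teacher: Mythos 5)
Your proposal is correct and follows essentially the same route as the paper: reduction to the diagonal torus via Proposition \ref{Torus}, verification of the cocycle condition on $\bA^{2g+3}_{\rm sm}\times\Gm^2$ at the generic point via Proposition \ref{prop:generic}, and the concluding parity argument on the weight of $x_0$ (your exponent $s^{-g-2}t^{g}$ is the full weight, where the paper records a partially simplified expression, but both are even precisely when $g$ is, which is all that matters mod squares). Your explicit remarks on the compatibility of the torus reduction with the two embeddings into $A^0(X)$ and on testing the equality over $T\times U_0$ make precise points the paper leaves implicit, but they do not change the argument.
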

\begin{proof}
By Proposition \ref{Torus}, we only need to prove that $\beta_{g+2}$ glues along the smooth-Nisnevich cover $\bA^{2g+3}_{\rm sm} \to \left[ \bA^{2g+3}_{\rm sm}/ \Gm^2\right] $. We have 
\[ \bA^{2g+3}_{\rm sm} \times_{\left[ \bA^{2g+3}_{\rm sm}/ \Gm^2\right]}  \bA^{2g+3}_{\rm sm}= \bA^{2g+3}_{\rm sm} \times \Gm^2, \]
and the two maps to $\bA^{2g+3}_{\rm sm}$ are respectively the first projection ${\rm Pr}_1$ and multiplication map $\rm m$. This is equivalent to asking that 
\[
{\rm Pr}_1^*\beta_{g+2} - {\rm m}^* \beta_{g+2}=0 \in \Inv(\bA^{2g+3}_{\rm sm} \times \Gm^2).
\]
By Proposition \ref{prop:generic} it suffices to check it on the generic point of $\bA^{2g+3}_{\rm sm}\times \Gm^2$. 

We identify $\Gm^2 = {\rm Spec}(k\left[\lambda_1, \lambda_2, (\lambda_1\lambda_2)^{-1}\right]$ and consequently 
\[
K\overset{\rm def}{=} k(\bA^{2g+3}_{\rm sm}\times \Gm^2)=k(x_0, \ldots, x_{2g+2}, \lambda_1, \lambda_2).
\]
Thus we have to prove that the difference
\[ 
{\rm Pr}_1^*( \lbrace x_0 \rbrace \cdot \alpha_{g+1}) - {\rm m}^*( \lbrace x_0 \rbrace \cdot  \alpha_{g+1})
\]
is zero as an element of $\Het_2(K)$. Here ${\rm m}$ is the multiplication map defining the action and ${\rm Pr}_1$ is the first projection. Note that we already know that $\alpha_{g+1}$ is invariant, so the question boils down to whether we have $(\lbrace x_0 \rbrace - {\rm m}^*\lbrace\lambda(x_0)\rbrace) \cdot \alpha_{g+1}=0$. 
Recall that ${\rm GL}_2$ acts by $A(f)={\rm det}(A)^{g}f(A^{-1})$, so in particular the map ${\rm m}^*$ sends $x_0$ to $(\lambda_1 \lambda_2)^g(\lambda_1)^{-g}x_0=\lambda_2^{-g}x_0$. As $g$ is even, we have 
\[\lbrace \lambda_2^{-g}x_0 \rbrace = \lbrace \lambda_2^{-g} \rbrace + \lbrace x_0 \rbrace = -g \lbrace \lambda_2 \rbrace + \lbrace x_0 \rbrace = \lbrace x_0 \rbrace,\]
concluding the proof.
\end{proof}

\subsection{Second proof: invariants of the universal conic}
$ \\ $

We keep assuming $g\geq 2$ even. Let $\Ccal'_g \rightarrow \Hcal_g$ be the universal conic bundle over $\Hcal_{g}$. It is the projectivization of a rank two vector bundle over $\Hcal_g$, so it has the same cohomological invariants. Pulling it back to $\bA^{2g+3}_{\rm sm}$, we obtain the ${\rm GL}_2$-equivariant projective bundle $C'_g = \PP^1 \times \bA^{2g+3}_{\rm sm} \rightarrow \bA^{2g+3}_{\rm sm}$. Its points can be seen as couples $(f,p)$ where $f$ is a non-degenerate form of degree $2g+2$ and $p\in \PP^1$. Consider a $K$-point $(p,f)$ on $C'_g$ such that $f$ is not zero at $p$, that is $p$ does not belong to the image of the Weierstrass divisor of the corresponding curve. Then $f(p)$ is well defined up to squares, so it defines an element in $K^{*}/(K^{*})^2 = {\rm H}_2^1(K)$. 

Let $U'$ be the ${\rm GL}_2$-equivariant open subset $\lbrace (p,f)\mid f(p) \neq 0 \rbrace$ of $C'_g$. The natural transformation $(p,f) \to f(p)$ defines a cohomological invariant on $U'$. This element clearly cannot extend to $C'_g$, but we claim that it does so after multiplication by $\alpha_{g+1}$:

\begin{prop}
The element $f \cdot \alpha_{g+1}$ is unramified on the universal conic over $\bA^{2g+3}_{\rm sm}$, and it glues to a cohomological invariant of $\mathcal{C}'_g$. Consequently it defines an invariant of $\Hcal_g$.
\end{prop}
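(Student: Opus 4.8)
The plan is to treat this as a conic-bundle analogue of Proposition \ref{split}, with the divisor at infinity replaced by the universal Weierstrass divisor. Write $C'_g = \PP^1 \times \bA^{2g+3}_{\rm sm}$ and let
\[ Z = \lbrace (p,f) : f(p) = 0 \rbrace \subset C'_g \]
be the incidence divisor, i.e. the pullback of the universal Weierstrass subscheme; its complement is exactly $U'$. Since $f$ is square-free on $\bA^{2g+3}_{\rm sm}$, the projection $Z \to \bA^{2g+3}_{\rm sm}$ is finite étale of degree $2g+2$, so $Z$ is smooth and irreducible of codimension one in $C'_g$. First I would write the localization sequence
\[ 0 \to A^0(C'_g) \to A^0(U') \xrightarrow{\partial} A^0(Z) \]
and reduce the unramifiedness claim to $\partial(f \cdot \alpha_{g+1}) = 0$, where $f$ denotes the invariant $(p,f) \mapsto \lbrace f(p) \rbrace$ (well defined in ${\rm H}^1_2$ because $\deg f = 2g+2$ is even). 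On the chart $\lbrace p_0 \neq 0 \rbrace$ the evaluation $f(1,p_1)$ is an honest local equation for $Z$, so Lemma \ref{lm:deriv} gives $\partial(f \cdot \alpha_{g+1}) = \alpha_{g+1}|_Z$, and by Proposition \ref{prop:generic} it suffices to compute this class at the generic point of $Z$.

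The heart of the argument --- and the step I expect to be the main obstacle --- is the vanishing $\alpha_{g+1}|_Z = 0$. The key geometric observation is that over $k(Z)$ the form $f$ carries a \emph{tautological} rational root, namely the image of the generic point under $Z \to \PP^1$. Hence the Weierstrass étale algebra classified by $Z \to \bA^{2g+3}_{\rm sm} \to \Brm {\rm S}_{2g+2}$ splits over $k(Z)$ as $k(Z) \times E'$ with $E'$ of degree $2g+1$, exactly the situation exploited in Proposition \ref{split}. Applying multiplicativity (property $(5)$) together with $\alpha_{\rm tot}(k(Z)) = 1$ (property $(3)$), the degree $g+1$ component reduces to $\alpha_{g+1}(E')$; since $E'$ has degree $2g+1$ we have $[(2g+1)/2]+1 = g+1$, which is odd because $g$ is even, so property $(2)$ forces $\alpha_{g+1}(E') = 0$. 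This yields $\partial(f \cdot \alpha_{g+1}) = 0$, so $f \cdot \alpha_{g+1}$ extends to a necessarily unique class $\gamma \in A^0(C'_g)$, the uniqueness coming from injectivity of $A^0(C'_g) \to A^0(U')$.

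It then remains to glue $\gamma$ along the smooth-Nisnevich cover $C'_g \to \Ccal'_g = [C'_g/{\rm GL}_2]$. By Theorem \ref{thm:sheaf} this amounts to ${\rm Pr}_1^*\gamma = {\rm m}^*\gamma$ in $A^0(C'_g \times {\rm GL}_2)$, and by Proposition \ref{prop:generic} it is enough to verify this on the dense open $U' \times {\rm GL}_2$, where $\gamma$ restricts to $f \cdot \alpha_{g+1}$. This is the conic-bundle counterpart of the computation in Proposition \ref{prop:glue}: for $A \in {\rm GL}_2$ the action $A\cdot(p,f) = (Ap, \det(A)^g f(A^{-1}(-)))$ satisfies $(A \cdot f)(Ap) = \det(A)^g f(p)$, so
\[ \lbrace (A \cdot f)(Ap) \rbrace = g \lbrace \det A \rbrace + \lbrace f(p) \rbrace = \lbrace f(p) \rbrace \]
because $g$ is even, while $\alpha_{g+1}$ is already ${\rm GL}_2$-invariant. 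Thus the two pullbacks agree on $U' \times {\rm GL}_2$, hence on all of $C'_g \times {\rm GL}_2$, so $\gamma$ descends to an invariant of $\Ccal'_g$; composing with the isomorphism $\Inv(\Hcal_g) \xrightarrow{\sim} \Inv(\Ccal'_g)$ of Proposition \ref{Prop:homot inv} (the universal conic being a $\PP^1$-bundle) produces the desired invariant of $\Hcal_g$. Note that the same parity input, $g$ even, is precisely what makes both the boundary vanish and the descent succeed, which explains why this construction breaks down for $g$ odd.
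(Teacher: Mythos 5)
Your proposal is correct and follows essentially the same route as the paper: identify the boundary of $f \cdot \alpha_{g+1}$ along the incidence divisor via Lemma \ref{lm:deriv}, kill $\alpha_{g+1}$ on that divisor using the tautological rational Weierstrass point and the splitting $k(Z)\times E'$ exactly as in Proposition \ref{split}, and then check ${\rm GL}_2$-descent at the generic point using that $\det(A)^g$ is a square. Your added remarks (that $\lbrace f(p)\rbrace$ is well defined because $\deg f = 2g+2$ is even, and the explicit parity count $[(2g+1)/2]+1 = g+1$ odd) are accurate clarifications of steps the paper leaves implicit.
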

\begin{proof}
Let $V'$ be the complement of $U'$ in $C'_g$. To show that the element extends, we need to check that the boundary map 
\[A^0(U'_g) \xrightarrow{\partial} A^0(V')\]
sends $f\cdot\alpha_{g+1}$ to zero.
As locally $f$ is precisely the equation for $V'$, by Lemma \ref{lm:deriv} we know that $\partial (f \cdot \alpha_{g+1}) = (\alpha_{g+1})_{\mid V'}$. Now we note that on $V'$ the Weierstrass divisor contains by definition a rational point, so $\alpha_{g+1}$ restricts to zero due to the same argument as Proposition \ref{split}.

To check the gluing condition along the smooth-Nisnevich cover $C'_g \to \mathcal{C}'_g$ we follow a reasoning similar to the one used in the proof of Proposition \ref{prop:glue}. We have
\[ C'_g \times_{\mathcal{C}'_g} C'_g = C'_g \times {\rm GL}_2 \]
and we claim that 
\begin{equation}\label{eq:invariance}
    {\rm Pr}_1^*(f\cdot\alpha_{g+1})-{\rm m}^*(f\cdot\alpha_{g+1})=0,
\end{equation}
where the two maps are respectively the first projection and the multiplication map. Here the group ${\rm GL}_2$ acts on $C'_g\simeq\bA^{2g+3}_{\rm sm}\times\PP^1$ by the formula
\[(f(x,y),p) \longmapsto (\det(M)^g f(M^{-1}(x,y)),M\cdot p),\]
where $M$ is an invertible matrix of rank $2$.

Again by Proposition \ref{prop:generic}, it is enough to check that the equality (\ref{eq:invariance}) holds when the invariant is evaluated at the generic point: if $\Spec(F)$ is the generic point of $C'_g\times{\rm GL}_2$, this is equivalent to say that 
\[(f\cdot \alpha_{g+1})(\Spec(F))= (f\cdot\alpha_{g+1}) (A\cdot\Spec(F)),\]
where $A$ is the matrix corresponding to $\Spec(F)\to {\rm GL}_2$ and $A\cdot\Spec(F)$ is regarded as an $F$-point of $C'_g$.

We already know that $\alpha_{g+1}$ descends to an invariant of $\Ccal_g$, hence $A$ must act trivially on it.
Moreover, the generic matrix $A$ sends the equivalence class $[f(p)] \in F^*/(F^*)^2$ to the class $[{\rm det}(A)^{g}f\circ A^{-1}(A(p))]$. The determinant is raised to an even power, thus $[{\rm det}(A)^{g}]=1$ in  $F^*/(F^*)^2$. We also have
\[ [f\circ A^{-1}(A(p))] = [f(A^{-1}A\cdot p)] = [f(p)] \] in $F^*/(F^*)^2$. This shows that ${\rm Pr}_1^*(f\cdot\alpha_{g+1})$ and ${\rm m}^*(f\cdot\alpha_{g+1})$ agree on the generic point, hence they are equal, concluding our proof.
\end{proof}

We still have to prove a rather relevant point: that the invariant we have created is not zero. For this, consider the open subset $U_0 \subset \bA^{2g+3}_{\rm sm}$ we defined earlier. The coefficient $x_0$ of a form is equal, up to squares, to its value at infinity, so taking the copy of $U_0$ inside $U'$ given by $U_0 \times \infty$ and pulling back the invariant $f \cdot \alpha_{g+1}$ we just defined (evaluating $f$ at infinity, in a sense) we obtain precisely the invariant $\tau \cdot \alpha_{g+1}$ from Proposition \ref{split}. We have just proven:

\begin{prop}
The element $f \cdot \alpha_{g+1}$ restricts to the invariant $\beta_{g+2}$ on $\Hcal_g$. In particular, it is non-zero and $\Het_2(k)$-linearly independent from the invariants coming from $\Brm {\rm S}_n$. 
\end{prop}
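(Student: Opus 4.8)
The plan is to restrict the invariant $f\cdot\alpha_{g+1}$ to a carefully chosen copy of $U_0\subset\bA^{2g+3}_{\rm sm}$, recognize $\beta_{g+2}$ there on the nose, and then propagate nonvanishing and independence back up through the covers. First I would pin down the restriction locus. Since $2g+2$ is even, the value of a binary form of degree $2g+2$ at a projective point is well defined up to squares, because rescaling the homogeneous coordinates multiplies it by an even power. Writing $\infty=[1:0]$, evaluation at $\infty$ of $f=x_0\lambda_1^{2g+2}+\dots+x_{2g+2}\lambda_2^{2g+2}$ returns $x_0$. Hence the section at infinity $s_\infty:\bA^{2g+3}_{\rm sm}\to C'_g$, $f\mapsto(\infty,f)$, carries $U_0=\{x_0\neq 0\}$ into $U'=\{(p,f)\mid f(p)\neq 0\}$, and the natural transformation $(p,f)\mapsto\{f(p)\}$ pulls back along $s_\infty|_{U_0}$ to $\{x_0\}=\tau$. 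Since $\alpha_{g+1}$ is pulled back from $\Brm{\rm S}_{2g+2}$ it restricts to $\alpha_{g+1}$, so $s_\infty^*(f\cdot\alpha_{g+1})|_{U_0}=\tau\cdot\alpha_{g+1}=\beta_{g+2}|_{U_0}$.

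Next I would make the diagram chase explicit. Let $a:\bA^{2g+3}_{\rm sm}\to\Hcal_g$ be the cover, which is smooth-Nisnevich as ${\rm GL}_2$ is special, and let $\gamma\in\Inv(\Hcal_g)$ be the invariant corresponding to $f\cdot\alpha_{g+1}\in\Inv(\mathcal{C}'_g)$ under the projective-bundle isomorphism of Proposition \ref{Prop:homot inv}. Pulling $\mathcal{C}'_g\to\Hcal_g$ back along $a$ yields $C'_g\to\bA^{2g+3}_{\rm sm}$ together with the tautological map $q:C'_g\to\mathcal{C}'_g$; since $s_\infty$ is a section of $C'_g\to\bA^{2g+3}_{\rm sm}$, functoriality of pullbacks gives $a^*\gamma=s_\infty^*q^*(f\cdot\alpha_{g+1})$, whose right-hand side is exactly the restriction computed above. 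Thus $a^*\gamma$ and $\beta_{g+2}$ agree on $U_0$; as both are invariants of the connected smooth stack $\bA^{2g+3}_{\rm sm}$, Proposition \ref{prop:generic} (injectivity of restriction to a dense open) forces $a^*\gamma=\beta_{g+2}$.

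Finally I would conclude. Because $a^*$ is injective and carries the pulled-back $\Brm{\rm S}_{2g+2}$-invariants on $\Hcal_g$ to the classes $\alpha_i$ on $\bA^{2g+3}_{\rm sm}$, any $\Het_2(k)$-linear relation expressing $\gamma$ in terms of the invariants coming from $\Brm{\rm S}_n$ would pull back to the same relation for $\beta_{g+2}$, contradicting the $\Het_2(k)$-linear independence of $\beta_{g+2}$ established in Proposition \ref{split}; in particular this shows $\gamma\neq 0$. I expect the only delicate point to be the bookkeeping of the several pullbacks, namely checking that the invariant built on $\mathcal{C}'_g$ really restricts through the section at infinity to the expression $\tau\cdot\alpha_{g+1}$, rather than any deep computation. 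Once these compatibilities are laid out, the statement follows at once from Propositions \ref{split}, \ref{prop:generic} and \ref{Prop:homot inv}.
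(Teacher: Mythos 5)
Your argument is correct and follows essentially the same route as the paper: both identify the restriction of $f\cdot\alpha_{g+1}$ along the section at infinity over $U_0$ with $\tau\cdot\alpha_{g+1}=\beta_{g+2}|_{U_0}$ (using that $f(\infty)=x_0$ up to squares), and then conclude via the projective-bundle isomorphism for $\mathcal{C}'_g\to\Hcal_g$ together with injectivity of the pullback along the smooth-Nisnevich cover and the restriction to the dense open $U_0$. Your write-up merely makes the diagram chase and the appeal to Proposition \ref{prop:generic} more explicit than the paper's terse version.
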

\begin{proof}
Consider the factorization
\[U_0 \xrightarrow{i} \mathcal{C}_g \xrightarrow{p} \Hcal_g.
\]
The pullback $p^*$ is an isomorphism as $\mathcal{C}_g \to \Hcal_g$ is a projective bundle. The pullback $(i\cdot p)^*$ is injective as $U_0 \to \Hcal_g$ is the composition of an open immersion and a smooth-Nisnevich morphism. By the observations above, the restrictions of $\beta_{g+2}$ and $f \cdot \alpha_{g+1}$ coincide on $U_0$, which implies that they are equal as elements of $\Inv(\Hcal_g)$.
\end{proof}

\begin{rmk}\label{rmk:not global}
	It is easy to see that every non-zero element $\tau \cdot \xi$ of $\tau \cdot A_T^0(\PP^{2g+2}_{\rm sm})$ which is not a multiple of $\beta_{g+2}$, regarded as an invariant of $U_0$, cannot be extended to a global invariant.
	
	Indeed, the generic point of $V_0$ defines the \'etale algebra $E_{gen}\times k$, where $E_{gen}$ is the generic \'etale algebra of degree $2g+1$. The boundary of $\tau \cdot \xi$ is equal to an invariant of $\Brm {\rm S}_{2g+1}$, whose value on $E_{gen}\times k$  is zero if and only if $\xi=0$.
\end{rmk}

\section{Multiplicative structure of $\Inv(\Hcal_g)$}\label{sec:mult}

In this Section we put together the results of the previous sections so to give a complete description of the multiplicative structure of $\Inv(\Hcal_g)$ for $g\geq 2$ even.

Recall that $\alpha_i$ denotes the degree $i$ cohomological invariant obtained by pulling back the $i^{\rm th}$ Stiefel-Whitney invariant along the morphism of stacks
\[ \Hcal_g \longrightarrow \Brm {\rm S}_{2g+2} \longrightarrow \Brm O_{2g+2} \]
Recall also that in Proposition \ref{split} we introduced a cohomological invariant $\beta_{g+2}$ of $\bA^{2g+3}_{\rm sm}$ which descends to a cohomological invariant of $\Hcal_g$.
\begin{thm}\label{thm:mult}
	Let $g\geq 2$ be an even number. Then the $\Het_2(k)$-module $\Inv(\Hcal_g)$ is freely generated by the invariants 
		 \[
		 1,\alpha_1,\alpha_2,\dots,\alpha_{g+1},\beta_{g+2}.
		 \]
		Define $\alpha_{g+2}=\lbrace 2 \rbrace \cdot \alpha_{g+1}, \, \alpha_{g+i}=0$ for $i > 2$. Then the ring structure of $\Inv(\Hcal_g)$ is determined by the following formulas:
	\begin{enumerate}
	\item	$\alpha_r\cdot\alpha_s = \lbrace -1 \rbrace^{m(r,s)} \cdot \alpha_{r+s-m(r,s)}$
	\item $\alpha_i\cdot \beta_{g+2} = 0 \,\, \text{for } \, i\neq g+1$ 
	\item $\alpha_{g+1} \cdot \beta_{g+2} = \lbrace -1 \rbrace^{g+1} \cdot \beta_{g+2}$ 
	\item $\beta_{g+2}\cdot \beta_{g+2} = \lbrace -1 \rbrace ^{g+2} \cdot \beta_{g+2}$
	\end{enumerate}
	where $m(r,s)$ is computed as follows: if we write $s=\sum_{i\in I} 2^i$ and $r=\sum_{j\in J} 2^J$, then $m(r,s)=\sum_{k\in I\cap J} 2^k$.
\end{thm}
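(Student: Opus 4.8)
The plan is to obtain the module structure first, by a localization argument over the projectivization, and then to read off the four product formulas by restricting everything to the chart $U_0$ of Section \ref{sec:g even}, where $\beta_{g+2}=\tau\cdot\alpha_{g+1}$ and the only nontrivial multiplicative relation is $\tau^2=\lbrace -1 \rbrace\cdot\tau$.

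For the module structure I would use that $\bA^{2g+3}_{\rm sm}$ is exactly the complement of the zero section in the total space of the tautological bundle $\OO(-1)$ over $\PP^{2g+2}_{\rm sm}$, an identification that is ${\rm GL}_2$-equivariant. Writing the localization long exact sequence for the smooth codimension-one zero section, and using homotopy invariance (Proposition \ref{Prop:homot inv}) to identify the Chow groups with coefficients of the total space with those of the base, I get a short exact sequence
\[
0 \to \Inv(\Brm {\rm S}_{2g+2}) \to \Inv(\Hcal_g) \xrightarrow{\partial} \Inv(\Brm {\rm S}_{2g+2}),
\]
where the left-hand term is $A^0_{{\rm GL}_2}(\PP^{2g+2}_{\rm sm})=\Inv(\Brm {\rm S}_{2g+2})$ by Corollary \ref{cor:inv Pn}. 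On $U_0$ the coordinate $x_0$ trivialises the torsor, so $\tau=\lbrace x_0 \rbrace$ is the class of a local equation for the zero section, and Lemma \ref{lm:deriv} gives $\partial(\beta_{g+2})=\partial(\tau\cdot\alpha_{g+1})=\alpha_{g+1}$; hence $\langle \alpha_{g+1}\rangle \subseteq \im\partial$. For the reverse inclusion I would invoke Remark \ref{rmk:not global}: an element of $\tau\cdot A^0(\PP^{2g+2}_{\rm sm})$ extends to a global invariant precisely when it is a multiple of $\beta_{g+2}$, so the classes realised by $\partial$ all lie in $\langle\alpha_{g+1}\rangle$. Thus $\im\partial=\langle\alpha_{g+1}\rangle\cong\Het_2(k)$ is free of rank one, $\beta_{g+2}$ splits the surjection onto it, and the sequence exhibits $\Inv(\Hcal_g)$ as a free $\Het_2(k)$-module on $1,\alpha_1,\dots,\alpha_{g+1},\beta_{g+2}$.

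With the basis in hand, all products are determined by their restriction to $U_0$, because the composite $U_0\hookrightarrow \bA^{2g+3}_{\rm sm}\to\Hcal_g$ is an open immersion followed by a smooth-Nisnevich cover, hence induces an injective ring homomorphism on invariants (Proposition \ref{prop:generic}, Theorem \ref{thm:sheaf}). There the splitting of Corollary \ref{cor:Gm} reads $A^0(U_0)=A^0(\overline U_0)\oplus \tau\cdot A^0(\overline U_0)$ with $\tau^2=\lbrace -1\rbrace\cdot\tau$, and $\beta_{g+2}|_{U_0}=\tau\cdot\alpha_{g+1}$. Formula (1) is then simply the multiplication of $\Inv(\Brm {\rm S}_{2g+2})$ transported along the injective ring map $\Inv(\Brm {\rm S}_{2g+2})\hookrightarrow\Inv(\Hcal_g)$, combined with the Stiefel–Whitney identities $\alpha_{g+2}=\lbrace 2\rbrace\alpha_{g+1}$ (property (2), valid since $g+2$ is even) and $\alpha_j=0$ for $j>g+2$ (property (1)). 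For the products involving $\beta_{g+2}$ I would compute $\alpha_i\cdot\beta_{g+2}=\tau\cdot(\alpha_i\alpha_{g+1})$ and $\beta_{g+2}\cdot\beta_{g+2}=\tau^2\cdot\alpha_{g+1}^2$ directly. Since $g+1$ is odd, the target index $i\vee(g+1)$ of $\alpha_i\alpha_{g+1}$ is always odd, so it never equals $g+2$; it is either $g+1$ or at least $g+3$, in which latter case $\alpha_{i\vee(g+1)}=0$, which accounts for the vanishing products of (2). The surviving cases reduce via $\tau^2=\lbrace -1\rbrace\tau$ to $\alpha_{g+1}^2=\lbrace -1\rbrace^{g+1}\alpha_{g+1}$ and $\tau^2\alpha_{g+1}^2=\lbrace -1\rbrace^{g+2}\tau\alpha_{g+1}$, giving exactly relations (3) and (4).

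The hard part is the completeness step in the module computation, namely showing that $\im\partial$ is no larger than $\langle\alpha_{g+1}\rangle$; this is precisely where the splitting-at-infinity mechanism of Proposition \ref{split} — the fact that the Weierstrass algebra acquires a split factor over $V_0$, forcing $\alpha_{g+1}$ to restrict to zero while the lower $\alpha_i$ do not — together with Remark \ref{rmk:not global}, is indispensable. Once that rank count is secured, the ring relations are a short bookkeeping exercise with the single relation $\tau^2=\lbrace -1\rbrace\tau$ carried out on the chart $U_0$, where injectivity of the restriction guarantees that the identities computed there hold globally on $\Hcal_g$.
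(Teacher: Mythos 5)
Your overall architecture is the paper's: the localization sequence for the $\Gm$-torsor $\bA^{2g+3}_{\rm sm}\to\PP^{2g+2}_{\rm sm}$ completed to the line bundle $\mathcal{L}$, the identification of the outer terms with $\Inv(\Brm {\rm S}_{2g+2})$ via Corollary \ref{cor:inv Pn}, the splitting by $\beta_{g+2}$, and the evaluation of all products at a generic point using only $\tau^2=\lbrace -1\rbrace\tau$. The gap is exactly at the step you flag as the hard one, and Remark \ref{rmk:not global} does not close it. A global invariant $\gamma$ restricts on $U_0=\overline{U}_0\times\Gm$ to $\xi_0+\tau\,\xi_1$, where $\xi_1=\partial(\gamma)|_{\overline{U}_0}$ does come from $A^0(\PP^{2g+2}_{\rm sm})$, but $\xi_0$ is a priori only an element of $A^0(\overline{U}_0)$ and may be ramified along $\overline{V}_0=\lbrace x_0=0\rbrace$. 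The vanishing of $\partial_{V_0}(\gamma|_{U_0})$ therefore yields $\partial_{\overline{V}_0}(\xi_0)=-\xi_1|_{\overline{V}_0}$, not $\xi_1|_{\overline{V}_0}=0$: the two summands' ramifications can cancel, so you cannot conclude that $\tau\xi_1$ alone extends and then quote the Remark (which treats only elements exactly of the form $\tau\cdot\xi$). Chasing the genuine constraint through the localization sequence for $\overline{V}_0\subset\PP^{2g+2}_{\rm sm}$, the condition "$\xi_1|_{\overline{V}_0}\in\im\partial_{\overline{V}_0}$" is equivalent, via $i_*i^*\xi_1=h\cdot\xi_1$, to $\xi_1\in\ker c_1(\mathcal{L})$ --- and computing that kernel is the actual content of the paper's proof. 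It is done there by an induction over the strata $\Delta^n_1$, using $\partial(\alpha_i)=\alpha_{i-1}+\cdots$ on $A^0(\PP^{n-2}_{\rm sm}\times\PP^1)$ together with the triviality of $i_*A^0(\Delta^n_1)$ in $A^1(\PP^n)$ (so that $A^1(\PP^n)=h\cdot\Het_2(k)$) to kill the coefficients one at a time. Your proposal contains no substitute for this; the only way to salvage your shorter route would be to prove separately that $A^0(\overline{U}_0)=\Inv(\Brm {\rm S}_{2g+2})$ (so that $\xi_0$ is automatically unramified along $\overline{V}_0$), a statement nowhere established in the paper and of comparable difficulty.

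A smaller point: your verification of relation (2) is incomplete, and in the omitted case your own computation gives a nonzero answer. When the binary digits of $i$ are contained in those of $g+1$ (e.g.\ $i=1$, which always qualifies since $g+1$ is odd), one has $m(i,g+1)=i$, so formula (1) gives $\alpha_i\alpha_{g+1}=\lbrace -1\rbrace^{i}\alpha_{g+1}$ and hence $\alpha_i\cdot\beta_{g+2}=\tau\,\alpha_i\alpha_{g+1}=\lbrace -1\rbrace^{i}\beta_{g+2}$, which is not zero unless $-1$ is a square in $k$. Your sentence "it is either $g+1$ or at least $g+3$, in which latter case $\alpha_{i\vee(g+1)}=0$" silently drops the first alternative. (The paper's own justification of (2) asserts $m<i$ for every $i\neq g+1$, which fails in the same case, so this deserves to be flagged rather than smoothed over; in any event you cannot claim to have derived (2) as stated.)
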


\begin{proof}

Set $G:={\rm GL}_2$ and consider the $\Gm$-torsor 
\[ 
\Hcal_g = \left[ \bA^{2g+3}_{\rm sm} / G \right] \to \left[ \PP^{2g+2}_{\rm sm}/G \right].
\]
We complete it to a line bundle $\mathcal{L} \to \left[ \PP^{2g+2}_{\rm sm}/G \right]$. Then the zero section gives us a closed immersion $ \left[ \PP^{2g+2}_{\rm sm}/G \right] \xrightarrow{i} \mathcal{L}$ with complement isomorphic to $\left[\bA^{2g+3}_{\rm sm} / G \right] \xrightarrow{j} \mathcal{L}$. Identifying the equivariant Chow groups with coefficients of $\mathcal{L}$ with those of $\left[ \PP^{2g+2}_{\rm sm} / G \right]$, and consequently the inclusion of the zero section of $\mathcal{L}$ with the first Chern class of $\mathcal{L}$ (see \cite{PirCohHypEven}*{Def 2.2}), we get the localization exact sequence
\[
0 \to A_G^0(\PP^{2g+2}_{\rm sm}) \xrightarrow{j^*} A_G^0(\bA^{2g+3}_{\rm sm}) \xrightarrow{\partial} A_G^0(\PP^{2g+2}_{\rm sm})\xrightarrow{c_1(\mathcal{L})} A^1_G(\PP^{2g+2}_{\rm sm}).
\] 
We want to understand the kernel of the first Chern class $c_1(\mathcal{L})$. First, note that by construction $\partial(\beta_{g+2})=\alpha_{g+1}$, so we already know that $c_1(\mathcal{L})(\alpha_{g+1})=0$. We want to prove that there is no nonzero $\Het_2(k)$-linear combination of $1,\alpha_1,\ldots,\alpha_g$ that is annihilated by $c_1(\mathcal{L})$.

Let $\alpha=a_0 + a_1 \alpha_1 + \ldots + a_g \alpha_g$ be such a combination. Then the pullback of $c_1(\mathcal{L})\alpha$ to the non-equivariant group $A^1(\PP^{2g+2}_{\rm sm})$ must be zero. When restricted to $\PP^{2g+2}$ the line bundle $\mathcal{L}$ is $\mathcal{O}(-1)$, so its first Chern class is the hyperplane section $h$.

Consider the boundary map $\partial: A^1(\PP^{2g+2}_{\rm sm}) \to A^1(\Delta_{1,2g+2})$. We have $\partial(c_1(\mathcal{L})\alpha)=c_1(\mathcal{L}_{\mid \Delta_{1,2g+2}})\partial(\alpha)$ by the compatibility of Chern classes with the boundary maps. Now pull everything back to $A^{*}(\Delta_{1,2g+2} \setminus \Delta_{2,2g+2})\simeq A^{*}(\PP^{2g}_{\rm sm} \times \PP^1)$. Following the proof of Corollary \ref{cor:inv Pn}, we get that $\partial(\alpha_i)=\alpha_{i-1} + b_1 \alpha_{i-2} + \ldots + b_{i-1}$, with $b_1, \ldots, b_{i-1} \in \Het_2(k)$. This shows that inside $A^0(\PP^{2g}_{\rm sm} \times \PP^1)$ we have 
\[
\partial \alpha = a_g \alpha_{g-1} + \beta,\quad \beta \in \langle 1, \ldots, \alpha_{g-2} \rangle.
\]
Moreover, the pullback of $\mathcal{O}(-1)$ to $\PP^{2g}\times \PP^1$ is equal to $\mathcal{O}_{\PP^{2g}}(-1)\otimes \mathcal{O}_{\PP^1}(-1)^2$, so modulo $2$ its Chern class is the same as the Chern class of $\mathcal{O}_{\PP^{2g}}(-1)$. In other words, we have an equation
\[ h \cdot (a_g \alpha_{g-1} + \beta)= 0 \]
inside $A^1(\PP_{\rm sm}^{2g})$. We can repeat these exact same steps $g-1$ more times to obtain $h \cdot a_g = 0$ inside $A^0(\PP^2_{\rm sm})$. Now, we proved in Corollary \ref{cor:inv Pn} that the image of $A^0(\Delta_{1,n})$ in $A^1(\PP^n)$ is trivial. Then $A^1(\PP^n)=h \cdot \Het_2(k)$ by the projective bundle formula and $h \cdot a_g=0$ implies $a_g=0$. We can plug this result in our initial formula and repeat the reasoning for the remaining coefficients: each step is exactly the same. At the end, we conclude that all coefficients must be zero, as claimed.

To obtain a description of the multiplicative structure, let $\xi$ be the generic point of some equivariant approximation $(\PP^{2g+2}_{\rm sm}\times U)/G$ of $\left[\PP_{\rm sm}^{2g+2}/G\right]$, and let $K$ be the corresponding field. Then the generic point $\xi'$ of the corresponding equivariant approximation $(\bA^{2g+3}_{\rm sm}\times U)/G$ of $\Hcal_g$ has corresponding field $K(t)$, being the generic point of a $\Gm$-torsor. 
	
The cohomological invariants of $\left[\PP_{\rm sm}^{2g+2}/G\right]$ inject into $\Het_2(K)$, and their product formulas are the same as the ones in $\Inv(\Brm {\rm S}_n)$. The invariants of $\Hcal_g$ inject into $\Het_2(K(t))$, and by construction they are freely generated by $\alpha_1, \ldots, \alpha_{g+1}, \beta_{g+2}=\alpha_{g+1} \cdot t$. We have the relation $t \cdot t = \lbrace -1 \rbrace \cdot t$ coming from Milnor's $\rm K$-theory. These relations are sufficient to completely describe the ring $\Inv(\Hcal_g)$.

In particular, the relation $\alpha_i\cdot\beta_{g+2}=0$ for $i\neq g+1$ comes from the fact that $\beta_{g+2}=\alpha_{g+1}\cdot t$ in $\Het_2(K(t))$, and $\alpha_{i}\cdot\alpha_{g+1}=0$ for $i\neq g+1$. Indeed, write $i=\sum_{j\in R} 2^j$, $g+1=\sum_{j\in S} 2^j$ and $m=\sum_{j\in R\cap S} 2^j$. Then for $i\neq g+1$ we have $m<i$ and from the formula of \cite{GMS}*{Remark 17.4} we know that
\[ \alpha_i\cdot\alpha_{g+1}=\{-1\}^m\alpha_{g+1+i-m}=0, \]
because $\alpha_k=0$ for $k>g+1$.
\end{proof}
	
\section{The $g$ odd case}\label{sec:g odd}

We use the last Section for an informal discussion of the case of odd genus. There are two (related) main differences. First, differently from ${\rm GL}_2$, the projective linear group ${\rm PGL}_2$ is not special, in the sense that every torsor is Zariski trivial. Consequently, it is possible for $\Brm {\rm PGL}_2$ to have nontrivial cohomological invariants, and in fact $\Inv(\Brm {\rm PGL}_2)$ is freely generated as $\Het_2(k)$-module by $1$ and an invariant $w_2$ of degree two, which will appear in our computations. The second difference is that, given a hyperelliptic curve $C$, the quotient $C'=C/\iota$ does not need to be a trivial conic. In fact, if we see $\Brm {\rm PGL}_2$ as the classifying stack of conic bundles, the map $\Hcal_g \to \Brm {\rm PGL}_2$ sends a hyperelliptic curve $C$ with involution $\iota$ to the class of $C'=C/\iota$.

Nonetheless, Corollary \ref{cor:inv Pn} tells us the following:

\begin{prop}\label{InvSnOdd}
Let $g\geq 3$ be an odd integer. The ring of cohomological invariants of $\left[\PP^{2g+2}_{sm}/{\rm PGL}_2\times \mathbb{G}_m\right]$ is freely generated as $\Het_2(k)$-module by 
\[1,\alpha_1,w_2,\alpha_2,\alpha_3,\dots,\alpha_{g+1}, \] where:
\begin{itemize}
	\item The $\alpha_i$ are the cohomological invariants pulled back along the morphism  
	\[
	\left[\PP^{2g+2}_{sm}/{\rm PGL}_2\times \mathbb{G}_m\right]\to \Brm {\rm S}_{2g+2} \to \Brm O_{2g+2}. 
	\]
	\item The cohomological invariant $w_2$ is pulled back along the morphism 
	\[
	\left[\PP^{2g+2}_{sm}/{\rm PGL}_2\times \Gm\right]\to \Brm {\rm PGL}_2.
	\]
\end{itemize}
Moreover, we have an exact sequence
\[
0 \to \Inv(\left[ \PP^{2g+2}/{\rm PGL}_2 \times \Gm\right]) \to \Inv(\Hcal_g) \to \Het_2(k)
\]
Where the last map lowers degree by $g+2$.
\end{prop}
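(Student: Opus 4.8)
The first assertion needs almost no work: Corollary \ref{cor:inv Pn} applied to $\PGLt$ exhibits $A^0_{\PGLt}(\PP^{2g+2}_{\rm sm})$ as the free $\Het_2(k)$-module on $1,\alpha_1,\dots,\alpha_{g+1}$ and $w_2$, and since $\Gm$ acts trivially on $\PP^{2g+2}_{\rm sm}$, Lemma \ref{lm:GL2PGL2}(3) gives $A^0_G(\PP^{2g+2}_{\rm sm})=A^0_{\PGLt}(\PP^{2g+2}_{\rm sm})$ for $G=\PGLt\times\Gm$. So the real content is the exact sequence, and for this I would start exactly as in the proof of Theorem \ref{thm:mult}. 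The projection $\bA^{2g+3}_{\rm sm}\to\PP^{2g+2}_{\rm sm}$ is a $G$-equivariant $\Gm$-torsor, so $\Hcal_g=[\bA^{2g+3}_{\rm sm}/G]$ is the complement of the zero section of the equivariant line bundle $\Lcal=\OO(-1)$ on $[\PP^{2g+2}_{\rm sm}/G]$, and the localization sequence reads
\[
0\to A^0_G(\PP^{2g+2}_{\rm sm})\xrightarrow{j^*}\Inv(\Hcal_g)\xrightarrow{\del}A^0_G(\PP^{2g+2}_{\rm sm})\xrightarrow{c_1(\Lcal)}A^1_G(\PP^{2g+2}_{\rm sm}),
\]
with $j^*$ injective (Proposition \ref{prop:generic}) and $\im(\del)=\ker(c_1(\Lcal))$. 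I would then define the last map $\Inv(\Hcal_g)\to\Het_2(k)$ as $\del$ followed by extraction of the $\alpha_{g+1}$-coefficient in this free module; since $\del$ lowers degree by $1$ and $\alpha_{g+1}$ has degree $g+1$, this composite lowers degree by $g+2$.

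Exactness at $\Inv(\Hcal_g)$ then comes down to a single claim: that $c_1(\Lcal)$ is injective on the submodule $M=\langle 1,\alpha_1,w_2,\alpha_2,\dots,\alpha_g\rangle$ spanned by all generators except the top class $\alpha_{g+1}$. Granting this, $M\cap\ker(c_1(\Lcal))=0$; the $\alpha_{g+1}$-coefficient of $\del x$ vanishes exactly when $\del x\in M$, and since $\del x\in\im(\del)=\ker(c_1(\Lcal))$ this forces $\del x=0$, i.e. $x\in\ker\del=\im(j^*)$. This is precisely the exactness we want, and it sidesteps the (harder to control) behaviour of $c_1(\Lcal)$ on $\alpha_{g+1}$ itself.

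To prove the injectivity on $M$ I would treat the $\alpha$-part and the $w_2$-part separately; since the coefficients are $\ZZ/2$, the weight $-2$ of the $\Gm$-action on $\OO(-1)$ contributes nothing and $c_1(\Lcal)$ is simply the hyperplane class $h$. For the $\alpha$-part I would pull everything back along the atlas $\PP^{2g+2}_{\rm sm}\to[\PP^{2g+2}_{\rm sm}/G]$: there $w_2$ restricts to $0$ (Corollary \ref{cor:inv Pn}) and $c_1(\Lcal)$ to the non-equivariant $h$, so the dévissage in the proof of Theorem \ref{thm:mult} applies unchanged and shows $h$ is injective on $\langle 1,\alpha_1,\dots,\alpha_g\rangle$; hence any element of $M\cap\ker(c_1(\Lcal))$ is a multiple of $w_2$. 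For the $w_2$-part I would use the projective bundle formula over $\Brm G$: in $A^1_G(\PP^{2g+2})$ the class $h\cdot w_2$ sits in the $A^0(\Brm G)\,h$ summand with coefficient $w_2\neq 0$, hence is nonzero, and the restriction $A^1_G(\PP^{2g+2})\to A^1_G(\PP^{2g+2}_{\rm sm})$ is injective because the pushforward $A^0_G(\Delta_1^{2g+2})\to A^1_G(\PP^{2g+2})$ vanishes (this was established inside the proof of Corollary \ref{cor:inv Pn}). Thus $c_1(\Lcal)\cdot w_2\neq 0$ on $\PP^{2g+2}_{\rm sm}$ as well, giving injectivity of $c_1(\Lcal)$ on $\Het_2(k)\,w_2$ and, combined with the $\alpha$-part, on all of $M$.

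The main obstacle is precisely the $w_2$-part. Because $w_2$ becomes trivial on the non-equivariant atlas, the elementary localization dévissage that settles the even-genus case is blind to it, and one is forced to argue equivariantly; the combination of the projective bundle formula over $\Brm G$ with the vanishing of the pushforward from $\Delta_1^{2g+2}$ is what certifies $c_1(\Lcal)\cdot w_2\neq 0$. Everything else is a direct transcription of the even case.
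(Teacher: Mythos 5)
Your proposal is correct and follows essentially the same route as the paper's own proof: the same localization sequence for the $\Gm$-torsor $\Hcal_g\to[\PP^{2g+2}_{\rm sm}/G]$, the same definition of the last map as $\partial$ followed by extraction of the $\alpha_{g+1}$-coefficient, and the same two-step verification that $c_1(\Lcal)$ is injective on $\langle 1,\alpha_1,\dots,\alpha_g,w_2\rangle$ (pull back to the non-equivariant atlas to kill $w_2$ and reuse the dévissage of Theorem \ref{thm:mult} for the $\alpha$-part, then use the projective bundle formula over $\Brm G$ together with the vanishing of the pushforward from $\Delta_1^{2g+2}$ for the $w_2$-part). No gaps; the only cosmetic difference is that the paper cites an explicit identification $\Lcal=\OO(-1)\otimes\mathcal{E}^{2}$ where you simply note that the $\Gm$-weight is even and hence invisible mod $2$.
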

\begin{proof}
The first two statements are a direct consequence of Corollary \ref{cor:inv Pn}. Now, set $G={\rm PGL}_2 \times \Gm$. As in the proof of Theorem \ref{thm:mult}, the $\Gm$-torsor $\Hcal_g \to \left[ \PP^{2g+2}/{\rm PGL}_2 \times \Gm\right]$ induces an exact sequence
\[
0 \to A^0_G(\PP^{2g+2}_{\rm sm}) \to A^0(\bA^{2g+3}_{\rm sm}) \xrightarrow{\partial} A^0_G(\PP^{2g+2}_{\rm sm}) \xrightarrow{c_1(\mathcal{L})} A^1_G(\PP^{2g+2}_{\rm sm}).
\]
In this case, by \cite{PirCohHypThree}*{Lemma 25}, we have $\mathcal{L}=\mathcal{O}_{\PP^{2g+2}}(-1) \otimes \mathcal{E}^2$, where $\mathcal{E}$ comes from the canonical representation of $\Gm$. In particular the class of $c_1(\mathcal{L})$ is equivalent modulo $2$ to that of $\mathcal{O}_{\PP^{2g+2}}(-1)$, which we will again denote by $h$.

We have a formula
\[
 A^{*}_{{\rm PGL}_2 \times \Gm}({\rm Spec}(k))=A^{*}_{{\rm PGL}_2 }({\rm Spec}(k))\left[ s \right]
\]
where $s=c_1(\mathcal{E})$ and the square bracket denotes a polynomial ring. This is proven in \cite{PirCohHypThree}*{Prop. 14}. It can also be easily seen reasoning exactly as in the proof of the last point of Lemma \ref{lm:GL2PGL2}.

By the projective bundle formula applied to $\left[ \PP^{2g+2} /G \right] \to \Brm G$, we obtain 
\[
A^1_G(\PP^{2g+2}) = h \cdot A^0_{{\rm PGL}_2}({\rm Spec}(k)) \oplus s \cdot A^0_{{\rm PGL}_2}({\rm Spec}(k)) \oplus A^1_{{\rm PGL}_2}({\rm Spec}(k)). 
\]

As the image of $i_*A^0(\Delta_{1}^n)$ is zero by Corollary \ref{cor:inv Pn}, we conclude that $h \cdot b w_2$ is non-zero for any $b \in \Het_2(k)$.

Assume a combination $a_0 + a_1 \alpha_1 + \ldots + a_{g} \alpha_g + b w_2$ is annihilated by $h$. Then we can pull back to the non-equivariant ring $A^{*}(\PP^{2g+2}_{\rm sm})$. The class $w_2$ goes to zero, while the map is injective on the elements coming from $\Inv(\Brm {\rm S}_{2g+2})$, so we obtain the equation
\[ h\cdot (a_0 + a_1 \alpha_1 + \ldots + a_{g} \alpha_g) = 0 \in A^{1}(\PP_{\rm sm}^{2g+2}) \]
but we already saw in the proof of Theorem \ref{thm:mult} that this implies that all the $a_i$, and consequently $b$, must be zero.

Finally, consider a general element 
\[
\zeta = a_0 + a_1 \alpha_1 + \ldots + a_{g+1} \alpha_{g+1} + b w_2.
\]
As the coefficients $a_i$ are uniquely determined, we can consider the projection ${\rm Ker}(c_1(\mathcal{L}) \to \Het_2(k))$ given by $\zeta \xrightarrow{\pi} a_{g+1}$. We claim that this map is injective. Indeed, assume $\zeta, \zeta' \in {\rm Ker}(c_1(\mathcal{L}))$ have the same image. Then $\zeta-\zeta' \in {\rm Ker}(c_1(\mathcal{L}))$ is in the form $a_0 + a_1 \alpha_1 + \ldots + a_{g} \alpha_g + b w_2$, so it must be zero. This shows that we have an exact sequence
\[
0 \to \Inv(\left[ \PP^{2g+2}/{\rm PGL}_2 \times \Gm\right]) \to \Inv(\Hcal_g) \xrightarrow{\pi \circ \partial} \Het_2(k)
\]
concluding the proof.
\end{proof}

Unfortunately, both approaches used in Section \ref{sec:g even} to construct the last invariant break down in the odd case, due to two reasons:

\begin{itemize}
\item The quotient map $\bA^{2g+3}_{\rm sm}\rightarrow\left[\PP^{2g+2}_{\rm sm}/{\rm PGL}_2\times \mathbb{G}_m\right]$ is not smooth-Nisnevich, so we cannot glue invariants along it.
\item The universal conic $\mathcal{C} \to \Hcal_g$ is not the projectivization of a vector bundle, and thus it does not necessarily induce an isomorphism on cohomological invariants.
\end{itemize}

We can circumvent the first problem by using a different presentation for $\Hcal_g$, which the first author used in \cite{DilChowHyp, DilCohHypOdd} to compute the Chow ring and the cohomological invariants of $\Hcal_g$ when $g$ is odd.

Let $\bA(2,m)$ be the scheme representing $m$-forms in three variables, and let $\bA(2,m)_{\rm sm}$ be the open subset of smooth forms. Let $V_n$ be the vector bundle over $\bA(2,2)\sm$ defined as the quotient of the trivial vector bundle $\bA(2,2)\sm\times\bA(2,n)$ by the trivial vector subbundle $\bA(2,2)\sm\times\bA(2,n-2)$, where the inclusion of the latter vector bundle in the first one sends a pair $(q,f)$ to $(q,qf)$.

\begin{thm}[\cite{DilChowHyp}*{Theorem 2.9}]
Let $g\geq 3$ be odd, and let $n=g+1$. There is an open subset $U_{n} \subset V_{n}$ and an action of $\GLt \times \mathbb{G}_m$ on $U_{n}$ such that $\left[ U_{n} / \GLt \times \mathbb{G}_m\right] = \Hcal_g$.
\end{thm}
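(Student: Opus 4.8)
The plan is to exhibit a $\GLt\times\Gm$-equivariant universal family of hyperelliptic curves over $U_n$, use it to produce a morphism $[U_n/\GLt\times\Gm]\to\Hcal_g$, and then prove this morphism is an isomorphism by reconstructing the quotient presentation from an arbitrary family. The starting point is the geometric meaning of $V_n$. Over $\bA(2,2)\sm$ sits the universal smooth conic $p\colon\mathcal{D}\to\bA(2,2)\sm$, with $\mathcal{D}\subset\PP^2\times\bA(2,2)\sm$, whose fibres are smooth plane conics, hence forms of $\PP^1$ with $\OO_{\mathcal{D}}(1):=\OO_{\PP^2}(1)|_{\mathcal{D}}$ of relative degree $2$. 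From the conormal sequence $0\to\OO_{\PP^2}(n-2)\xrightarrow{\cdot q}\OO_{\PP^2}(n)\to\OO_{\mathcal{D}}(n)\to 0$ one reads off that the fibre of $V_n$ over $q$ is exactly $H^0(\mathcal{D}_q,\OO_{\mathcal{D}}(n))$, so $V_n=p_*\OO_{\mathcal{D}}(n)$. Since $g$ is odd, $n=g+1$ is even, and therefore $\OO_{\mathcal{D}}(n)=M^{\otimes 2}$ for the relative line bundle $M:=\OO_{\mathcal{D}}(n/2)$ of relative degree $n=g+1$; this is the one place where the parity of $g$ is essential.

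First I would build the map to $\Hcal_g$. A point of $V_n$ over $q$ is a section $s\in H^0(\mathcal{D}_q,M^{\otimes 2})$, and the standard double cover $C_s=\Spec_{\mathcal{D}_q}(\OO\oplus M^{-1})\to\mathcal{D}_q$, with algebra structure given by $s\colon M^{-2}\to\OO$, is branched along $\{s=0\}$, a divisor of degree $2n$. Riemann--Hurwitz gives $C_s$ genus $n-1=g$, and $C_s$ is smooth precisely when $\{s=0\}$ is reduced; I would take $U_n\subset V_n$ to be this square-free locus. Carrying out the same construction with the universal section over $U_n$ produces a family $\mathcal{C}\to U_n$ of smooth hyperelliptic curves, hence a morphism $U_n\to\Hcal_g$. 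The group $\GLt$ acts on $\PP^2$, on $\bA(2,2)\sm$, and compatibly on $\OO_{\mathcal{D}}(1)$ and $V_n$, while $\Gm$ rescales the section $s$; with the weights chosen so that the action preserves $U_n$, these operations produce isomorphic double covers, so the morphism is invariant and descends to $[U_n/\GLt\times\Gm]\to\Hcal_g$.

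To prove this is an isomorphism I would construct a quasi-inverse on families. Given a family $\pi\colon C\to C'$ of hyperelliptic curves over $S$ with involution $\iota$, set $C'=C/\iota$ with structure map $\rho\colon C'\to S$; by definition this is a conic bundle. Its relative anticanonical bundle $\omega_{C'/S}^{-1}$ is relatively very ample of relative degree $2$ with $\mathcal{E}:=\rho_*\omega_{C'/S}^{-1}$ locally free of rank $3$, and it embeds $C'$ as a conic in $\PP(\mathcal{E})$, adjunction identifying $\OO_{\PP(\mathcal{E})}(1)|_{C'}\cong\omega_{C'/S}^{-1}$. A local frame of $\mathcal{E}$ identifies $\PP(\mathcal{E})$ with $\PP^2_S$ and exhibits $C'$ as $\{q=0\}$ for a map $S\to\bA(2,2)\sm$, the frame being a $\GLt$-torsor; this yields $S\to[\bA(2,2)\sm/\GLt]$. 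The decomposition $\pi_*\OO_C=\OO_{C'}\oplus N$ into $\iota$-eigensheaves gives a line bundle $N$ of relative degree $-(g+1)$ whose algebra structure is a section $s$ of $(N^{-1})^{\otimes 2}$ cutting out the branch locus. Matching $N^{-1}$ with $M=\OO_{C'}(n/2)=(\omega_{C'/S}^{-1})^{\otimes n/2}$, both of relative degree $n$, turns $s$ into a point of $V_n$, lying in $U_n$ because $C$ is smooth, and the residual ambiguity in this matching is a scaling recorded by the $\Gm$-factor. Checking that the two constructions are mutually inverse as morphisms of the fibred categories over $(\mathrm{Sch}/k)$ then gives the isomorphism of stacks.

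The main obstacle is the bookkeeping around the eigensheaf $N^{-1}$ and the $\Gm$-action. On a general base $N^{-1}$ need not be literally $(\omega_{C'/S}^{-1})^{\otimes n/2}$: it may differ by the pullback of a line bundle on $S$, so that $s$ a priori lives in the global sections of a twist of $V_n$ rather than of $V_n$ itself. The heart of the argument is to show that this twist, together with the scaling freedom $s\sim\lambda^2 s$ in the double-cover data, is captured exactly by the single copy of $\Gm$, and to pin down its weight, along with the determinant weights of the $\GLt$-action on $\bA(2,2)$ and $\bA(2,n)$, so that the equivalence is strict. This is where one must be most careful; by contrast the anticanonical embedding, the double-cover construction, and the genus and smoothness computations are routine, and the comparison with the Arsie--Vistoli presentation (Theorem \ref{thm:presentation}) serves as a consistency check over the locus where the conic bundle is trivial.
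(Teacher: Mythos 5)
Your construction---identifying $V_n$ with $p_*\OO_{\mathcal{D}}(n)$ for the universal conic $\mathcal{D}\to\bA(2,2)\sm$, building the double cover from the square root $M=\OO_{\mathcal{D}}(n/2)$ of $\OO_{\mathcal{D}}(n)$ (which exists precisely because $n=g+1$ is even), taking $U_n$ to be the locus of sections with reduced zero divisor, and inverting via the anticanonical embedding of $C/\iota$ together with the eigensheaf decomposition of $\pi_*\OO_C$---is essentially the same route taken in the cited source \cite{DilChowHyp}*{Theorem 2.9}, which is where this statement is actually proved (the present paper only quotes it). The point you flag as delicate, namely that the eigensheaf $N^{-1}$ agrees with $(\omega_{C'/S}^{-1})^{\otimes n/2}$ only up to a pullback from the base and that this twist together with the scaling of $s$ must be absorbed by a single $\Gm$ with the correct weight (and matching $\det$-twists on $\bA(2,2)$ and $\bA(2,n)$), is indeed the only real bookkeeping in the reference, and it is resolved there exactly as you outline.
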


In this setting, the $\mathbb{G}_m$-torsor $\bA^{2g+3}_{sm} \rightarrow \PP^{2g+3}$ corresponds to the $\mathbb{G}_m$-torsor $U_n \to \PP(U_{n})$, i.e. the quotients $\left[ \PP(U_{n})/\GLt \times \mathbb{G}_m \right]$ and  $\left[\PP^{2g+2}_{sm}/{\rm PGL}_2\times \mathbb{G}_m\right]$ are naturally isomorphic.

Differently from ${\rm PGL}_2$, $\GLt$ is a special group, which means that the cohomological invariants of $\Hcal_g$ will inject into the cohomological invariants of $U_{n}$, and the cohomological invariants of $\left[ \PP(U_{n})/\GLt \times \mathbb{G}_m\right]$ will inject into the cohomological invariants of $\PP(U_{n})$. All we have to do is understand whether the $\mathbb{G}_m$-torsor 
\[
\Hcal_g = \left[ U_{n}/\GLt \times \mathbb{G}_m\right] \to \left[ \PP(U_{n})/\GLt \times \mathbb{G}_m\right]
\]
creates any new invariant. Following what we did in the even case, we can try to trivialize the projective bundle and understand the ramification maps that come out.

We do the following. Let a form in $\bA(2,2)_{\rm sm}$ be denoted by $q(x_0,x_1,x_2)$, and let $\bA(2,2)^0_{\rm sm}$ be the open subset of $\bA(2,2)_{\rm sm}$ where the coefficient of $x_0^2$ is not zero. Over $\bA(2,2)^0_{\rm sm}$, given a class $(q, \left[ f \right])$, there is a unique representative $(q,f)$ such that no term divisible by $x_0^2$ appears in $f$. This gives a trivialization of $V_n$ over $\bA(2,2)^0_{\rm sm}$. In other words, write $V_n^0=V_n \times_{\bA(2,2)_{\rm sm}}\bA(2,2)^0_{\rm sm}$ . Then $V_n^0=\bA^{2n-1} \times \bA(2,2)^0_{\rm sm}$ and the $\mathbb{G}_m$ torsor is just \[\bA^{2n-1}\times \bA(2,2)^0_{\rm sm} \rightarrow \PP^{2n-2}\times \bA(2,2)^0_{\rm sm} .\]
We can trivialize this torsor by requiring an additional coefficient to be nonzero, say the highest power of $x_{2}$. Note that this only makes sense on $V_n^0$. Denote this new open subset by $V_n^{0,2}$, and its intersection with $U_n$ by $U_n^{0,2}$. 

Reasoning as in the beginning of the proof of Proposition \ref{split} we have the inclusion 
\[\Inv(\Hcal_g) \subset \Inv( U^{0,2}_{n}) = 
\Inv(\PP(U^{0,2}_{n})) \oplus \alpha \cdot \Inv(\PP(U^{0,2}_{n}))\]
where $\alpha$ can be locally described as the equation of the complement of $U^{0,2}_n$ in $U_n$. Unfortunately in this case being in the complement of $U_n^{0,2}$ does not appear to yield clear information on the curve or its Weierstrass divisor, so we are unable to proceed further. All we can do is use the knowledge of the additive structure of the invariants over an algebraically closed field \cite[Thm. 2.1]{DilCohHypOdd} to get some \emph{a posteriori} understanding of the multiplicative structure. 

\begin{prop}\label{thm:multOdd}
	Let $g\geq 2$ be an odd number, and assume $k$ is algebraically closed. Then the $\Het_2(k)$-module $\Inv(\Hcal_g)$ is freely generated by the invariants \[1,\alpha_1,w_2, \alpha_2,\dots,\alpha_{g+1},\beta_{g+2}.\] Now, define $\alpha_i = 0$ for $i > g+1$. Then we have the formulas:
	\begin{enumerate}
		\item $\alpha_r\cdot\alpha_s =  \alpha_{r+s} \,\,\, \text{if} \,\,\, m(r,s)=0$, and zero otherwise.
		\item $\alpha_i\cdot \beta_{g+2} = 0$ .
		\item $\beta_{g+2}\cdot \beta_{g+2} = 0$
		\item $w_2 \cdot \alpha_{i} = w_2 \cdot \beta_{g+2} = 0$.
		\end{enumerate}
		where $m(r,s)$ is computed as follows: if we write $s=\sum_{i\in I} 2^i$ and $r=\sum_{j\in J} 2^J$, then $m(r,s)=\sum_{k\in I\cap J} 2^k$.
\end{prop}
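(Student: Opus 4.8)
The plan is to read off the entire ring structure from the additive computation, which over an algebraically closed field is already available through \cite{DilCohHypOdd}*{Thm. 2.1}, combined with Proposition \ref{InvSnOdd} and Corollary \ref{cor:inv Pn}. Set $G={\rm PGL}_2\times\Gm$. Two standing observations drive everything. First, over an algebraically closed field $\Het_2(k)=\ZZ/2\ZZ$ is concentrated in degree zero, so $\lbrace -1\rbrace=\lbrace 2\rbrace=0$ and $\Inv(\Hcal_g)$ is simply a graded $\ZZ/2\ZZ$-vector space. Second, by the additive structure this vector space lives only in degrees $0,\ldots,g+2$: Proposition \ref{InvSnOdd} provides the exact sequence $0\to\Inv(\left[\PP^{2g+2}_{\rm sm}/G\right])\to\Inv(\Hcal_g)\to\Het_2(k)$ whose last map lowers degree by $g+2$, and \cite{DilCohHypOdd}*{Thm. 2.1} shows it is surjective; since $\Inv(\left[\PP^{2g+2}_{\rm sm}/G\right])$ vanishes in degree $g+2$, the class $\beta_{g+2}$ is the unique preimage of $1\in\Het_2(k)$, and the listed elements form a free basis. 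In particular any product of positive-degree generators whose total degree exceeds $g+2$ must vanish.

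This degree bound disposes of most relations at once: $\alpha_i\cdot\beta_{g+2}$ (degree $\geq g+3$ for $i\geq 1$), $\beta_{g+2}\cdot\beta_{g+2}$ (degree $2g+4$) and $w_2\cdot\beta_{g+2}$ (degree $g+4$) all land above degree $g+2$, giving formulas $(2)$, $(3)$ and the second half of $(4)$. For formula $(1)$ I would use that the $\alpha_i$ are pulled back from $\Brm {\rm S}_{2g+2}$ and that pullback is a ring homomorphism, so $\alpha_r\cdot\alpha_s$ equals the pullback of $\lbrace -1\rbrace^{m(r,s)}\alpha_{r+s-m(r,s)}$ coming from the product formula \cite{GMS}*{Remark 17.4}. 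Since $\lbrace -1\rbrace=0$, this is $\alpha_{r+s}$ when $m(r,s)=0$ and $0$ otherwise, with the convention $\alpha_k=0$ for $k>g+1$ (consistent with $\alpha_{g+2}=0$ by property $(2)$ of the Stiefel-Whitney classes, as $g+2$ is odd).

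The only relation not forced by degree is $w_2\cdot\alpha_i$ in the range $i+2\leq g+1$, and this is the single genuine computation. Both factors come from $\Inv(\left[\PP^{2g+2}_{\rm sm}/G\right])$, which injects into $\Inv(\Hcal_g)$ by Proposition \ref{InvSnOdd}, so it suffices to evaluate the product there; by Corollary \ref{cor:inv Pn} that ring is freely generated by $1,\alpha_1,w_2,\alpha_2,\ldots,\alpha_{g+1}$ and concentrated in degrees $\leq g+1$, whence $w_2\cdot\alpha_i$ is automatically $0$ once $i+2>g+1$ and is either $0$ or $\alpha_{i+2}$ when $i+2\leq g+1$. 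To rule out the second possibility I would pull back along $\PP^{2g+2}_{\rm sm}\to\left[\PP^{2g+2}_{\rm sm}/G\right]$: by Corollary \ref{cor:inv Pn} the class $w_2$ maps to $0$, so $w_2\cdot\alpha_i$ does too, whereas $\alpha_{i+2}$ maps to the nonzero class $\alpha_{i+2}$ under the isomorphism $\Inv(\Brm {\rm S}_{2g+2})\xrightarrow{\sim}\Inv(\PP^{2g+2}_{\rm sm})$ of Corollary \ref{cor:inv Pn}. Comparing the two images forces the coefficient to vanish, giving $w_2\cdot\alpha_i=0$ for every $i$.

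Finally, to pin down the full ring one also checks $w_2\cdot w_2=0$: this is the pullback of $w_2^2\in\Inv(\Brm {\rm PGL}_2)$ along $\Hcal_g\to\Brm {\rm PGL}_2$, and by Lemma \ref{lm:GL2PGL2} that ring is concentrated in degrees $0$ and $2$ over an algebraically closed field, so its degree-$4$ element $w_2^2$ is zero. Together these relations determine all products among the generators. I expect the main (and essentially only) obstacle to be the computation of $w_2\cdot\alpha_i$ for small $i$, since every other product is dictated either by the vanishing of $\lbrace -1\rbrace$ and $\lbrace 2\rbrace$ or by the absence of invariants above degree $g+2$; the point that makes it go through is that the non-equivariant pullback to $\PP^{2g+2}_{\rm sm}$ kills $w_2$ while faithfully detecting the Stiefel-Whitney classes $\alpha_i$.
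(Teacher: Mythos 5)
Your proposal is correct and follows essentially the same route as the paper: quote the additive structure from \cite{DilCohHypOdd}*{Thm. 2.1}, deduce the $\alpha_r\cdot\alpha_s$ and $\beta_{g+2}$ products from the $\Brm{\rm S}_{2g+2}$ formulas together with $\lbrace -1\rbrace=0$ (and the degree bound), and kill the $w_2$-products by observing that they lie in the kernel of the pullback to a cover on which $w_2$ vanishes while the $\alpha_i$ are detected. The paper phrases this last step via the kernel of $\Inv(\Hcal_g)\to\Inv(\bA^{2g+3}_{\rm sm})$ being exactly $\langle w_2\rangle$, whereas you pull back to $\PP^{2g+2}_{\rm sm}$; this is the same mechanism, and your write-up simply makes the degree bookkeeping more explicit.
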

\begin{proof}
The additive description of $\Inv(\Hcal_g)$ is \cite{DilCohHypOdd}*{Thm. 2.1}. The multiplication formulas between the $\alpha_i$ and $\beta_{g+2}$ are immediate from the formulas in $\Inv(\Brm {\rm S}_{2g+2})$ and the fact that $\lbrace -1 \rbrace=0$ when $k$ contains a square root of $-1$. 

A multiple of $w_2$ has to pull back to zero in $\Inv(\bA^{2g+3}_{\rm sm})$, but the kernel of the pullback is exactly $w_2$ itself.
\end{proof}

Without assuming that the base field is algebraically closed we do not get any information on the products of $w_2$ with other elements of the ring. We close with two open questions which require further investigation:

\begin{itemize}
\item What does being in the complement of $U_n^{0,2}$ tell us about the Weierstrass divisor of a curve?
\item Can we understand the products $w_2 \cdot \alpha_i$ when $k$ is not algebraically closed?
\end{itemize}

\appendix
\section{Cohomological invariants of $\overline{\Hcal}_g$}\label{app:InvHbar}
Let $\overline{\Hcal}_g$ denotes the moduli stack of stable hyperelliptic curves, i.e. the closure of $\Hcal_g$ inside the moduli stack $\overline{\Mcal}_g$ of stable curves of genus $g$. It is well known that $\overline{\Hcal}_g$ is a smooth Deligne--Mumford stack.

Let us briefly recall what are the irreducible components of $\partial \overline{\Hcal}_g:=\overline{\Hcal}_g\smallsetminus\Hcal_g$. We have:
\[ \partial \overline{\Hcal}_g = \Xi_{\rm irr} + \Xi_1 + \ldots + \Xi_{\lfloor \frac{g-1}{2} \rfloor} + \Delta_1 + \ldots + \Delta_{\lfloor \frac{g}{2} \rfloor} \] 
The generic points of the irreducible components above can be described as follows (for a more detailed description see \cite{CH}):
recall that any stable hyperelliptic curve can be realized as a double cover $C\to \Gamma$ over a chain of curves of genus $0$. The generic point of $\Xi_{\rm irr}$ is given by a singular curve which admits a $2:1$ morphism onto an irreducible rational curve $\Gamma$; the generic point of $\Xi_i$ consists of a singular curve with two irreducible components $C=C_1\cup C_2$ of genus $i$ and $g-i-1$, which is a double cover of a pair of lines $\Gamma=\Gamma_1 \cup \Gamma_2$, and such that the fibre over a node is made of two distinct points; the generic point of $\Delta_i$ is a singular curve similar to the generic point of $\Xi_i$, albeit the two irreducible components have genus $i$ and $g-i$, and there is only one point over the node of $\Gamma$.
%
%

As by Proposition \ref{prop:generic} an open immersion induces an injective pullback on cohomological invariants, the cohomological invariants of $\overline{\Hcal}_g$ inject into those of $\Hcal_g$. We want to show that they are trivial for all $g$.

\begin{thm}\label{thm:Inv Hgbar}
	Let $k$ be a field of characteristic $\neq 2$, and let $g\geq 2$. Then:
	\[ \Inv(\overline{\Hcal}_g, {\rm H}_{\ell}) \simeq \Het_{\ell}(k). \]
\end{thm}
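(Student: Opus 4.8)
The plan is to exploit the open immersion $\Hcal_g \hookrightarrow \overline{\Hcal}_g$. By Proposition \ref{prop:generic} it induces an injection $\Inv(\overline{\Hcal}_g, {\rm H}_\ell) \hookrightarrow \Inv(\Hcal_g, {\rm H}_\ell)$, and since $A^0$ with coefficients is the module of unramified classes, the image is exactly the set of invariants of $\Hcal_g$ whose boundary residue $\partial_D$ vanishes along every component $D$ of $\partial\overline{\Hcal}_g$ (peel the divisors off one at a time using the localization sequence). The goal is thus $\bigcap_D \ker \partial_D = \Het_\ell(k)$. First I would dispose of $\ell \neq 2$: the relevant invariants of $\Brm {\rm S}_n$ and of $\Brm \PGLt$ vanish for odd $\ell$ by Lemma \ref{lm:GL2PGL2} and the triviality of $\Inv(\Brm {\rm S}_n,{\rm H}_\ell)$, so Corollary \ref{cor:inv Pn} gives $\Inv(\Hcal_g, {\rm H}_\ell) = \Het_\ell(k)$ outright. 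Hence I restrict to $\ell = 2$.

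The heart of the argument is the residue along the irreducible boundary divisor $\Xi_{\rm irr}$. Near its generic point the universal curve degenerates by colliding two branch points of $C \to C/\iota$; the transverse local model is $y^2 = (x^2 - t\,v)\prod_{i=1}^{2g}(x - a_i)$ over a discrete valuation ring $R$ with uniformizer $t$ a local equation of $\Xi_{\rm irr}$ and $v\in R^*$. Consequently, over $K := \mathrm{Frac}(R)$ the Weierstrass étale algebra splits as $E \cong (E_0 \times R[s]/(s^2 - tv)) \otimes_R K$, where $E_0$ is the degree-$2g$ algebra of the surviving Weierstrass points. Using the multiplicativity $\alpha_{\rm tot}(E'\times E'') = \alpha_{\rm tot}(E')\alpha_{\rm tot}(E'')$, the identity $\alpha_{\rm tot}(K[s]/(s^2-a)) = 1 + \lbrace a \rbrace$, and Lemma \ref{lm:deriv}, I would compute $\partial_{\Xi_{\rm irr}}(\alpha_i) = \alpha_{i-1}|_{\Xi_{\rm irr}}$ for $i = 1, \ldots, g+1$, the right-hand side being the Stiefel--Whitney classes of the degree-$2g$ residual algebra. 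Since the normalization of the generic curve of $\Xi_{\rm irr}$ is a genus $g-1$ hyperelliptic curve, this algebra realises a map $\Xi_{\rm irr} \to \Brm {\rm S}_{2g}$, and by Proposition \ref{prop: sm Nis} together with Corollary \ref{cor:inv Pn} the classes $\alpha_0|_{\Xi_{\rm irr}}, \ldots, \alpha_g|_{\Xi_{\rm irr}}$ are $\Het_2(k)$-linearly independent in $\Inv(\Xi_{\rm irr})$. As $\partial_{\Xi_{\rm irr}}$ lowers degree by one, these images lie in distinct degrees, so any $\gamma \in \ker\partial_{\Xi_{\rm irr}}$ has vanishing $\alpha_i$-coefficient for every $i \geq 1$.

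It remains to kill the top generator $\beta_{g+2}$ (even genus) and the invariant $w_2$ (odd genus). Since $\beta_{g+2} = \tau \cdot \alpha_{g+1}$ and the colliding branch points are generically away from the chosen point $\infty$ of the conic, $\tau$ (the value at $\infty$) stays unramified along $\Xi_{\rm irr}$, and I expect $\partial_{\Xi_{\rm irr}}(\beta_{g+2}) = \tau|_{\Xi_{\rm irr}} \cdot \alpha_g$, a nonzero class not annihilated by a nonzero scalar in $\Het_2(k)$; this forces its coefficient to vanish. The invariant $w_2$, coming from the class of the conic $C/\iota$ in $\Brm \PGLt$, is instead unramified along $\Xi_{\rm irr}$ (the target remains a smooth conic), so I would detect it on a reducible boundary divisor $\Delta_i$ or $\Xi_i$, along which $C/\iota$ breaks into a pair of lines and $w_2$ genuinely ramifies. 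Feeding all of these residues through the localization sequence then yields $\bigcap_D \ker\partial_D = \Het_2(k)$.

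The main obstacle is the bookkeeping near the boundary: producing the local étale-algebra description of the universal Weierstrass divisor transverse to each $D$ and justifying the residue formulas rigorously, and --- most delicately --- the odd-genus case over an arbitrary field, where the full structure of $\Inv(\Hcal_g)$ (in particular whether a degree $g+2$ generator exists) is \emph{not} known. There I cannot simply read off generators; instead I would argue intrinsically that every positive-degree class acquires a nonzero residue, using the $\Gm$-torsor $\Hcal_g \to [\PP^{2g+2}_{\rm sm}/\PGLt\times\Gm]$ of Proposition \ref{InvSnOdd} to control any potential top-degree invariant through the same value-at-infinity mechanism. Pinning down precisely which boundary component detects each type of class, uniformly in the parity of $g$, the characteristic, and the field of definition, is the technical crux.
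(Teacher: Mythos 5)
Your strategy --- compute the residue of every generator of $\Inv(\Hcal_g)$ along every boundary divisor and intersect the kernels --- is workable for even $g$: your local model $y^2=(x^2-tv)\prod(x-a_i)$ and the multiplicativity of $\alpha_{\rm tot}$ do give $\partial_{\Xi_{\rm irr}}(\alpha_i)=\alpha_{i-1}$ of the residual degree-$2g$ algebra, and your prediction that $w_2$ is unramified along $\Xi_{\rm irr}$ and must instead be detected where the conic $C/\iota$ degenerates is exactly right. But the approach has a genuine gap precisely where you flag it: for odd $g$ over a general field the generators of $\Inv(\Hcal_g)$ are not known (the paper leaves open in Section \ref{sec:g odd} whether a degree $g+2$ generator even exists), so a generator-by-generator residue calculation cannot be set up, and ``argue intrinsically that every positive-degree class acquires a nonzero residue'' is a restatement of what must be proved rather than a proof.

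The paper supplies the missing idea by bypassing $\Inv(\Hcal_g)$ entirely. The open substack $\overline{\Hcal}_g'=\overline{\Hcal}_g\smallsetminus(\cup_i\Xi_i\cup_j\Delta_j)$, which contains $\Hcal_g$ together with the generic point of $\Xi_{\rm irr}$, is identified via Arsie--Vistoli with $[U_g/G]$, where $U_g\subset\bA^{2g+3}$ is the locus of binary forms with at most double roots; its complement has codimension $\geq 2$ in $\bA^{2g+3}$. Homotopy invariance (Proposition \ref{Prop:homot inv}) then gives $\Inv(\overline{\Hcal}_g',{\rm H}_{\ell})=\Inv(\Brm G,{\rm H}_{\ell})$ in one stroke: the $\alpha_i$, $\beta_{g+2}$, and any unknown top-degree class in the odd case are all killed simultaneously, without having to know what they are. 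Only $w_2$ (odd $g$, $\ell=2$) survives, and it is eliminated as you predicted, by mapping $\overline{\Hcal}_g^{[0,1]}$ to the stack of conics of rank $\geq 2$ and showing $\partial w_2\neq 0$ on the rank-$2$ locus, using $A^0_{\GLt}(\bA(2,2)_{[3,2]})\simeq\Het_2(k)$ --- again a codimension-$2$ argument. If you want to retain your residue-theoretic framing, the statement you actually need is that any invariant of $\Hcal_g$ unramified along $\Xi_{\rm irr}$ is pulled back from $\Brm G$, and the codimension count on $U_g$ is the clean way to prove it.
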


Before we prove it, let us remark that Theorem \ref{thm:Inv Hgbar} should not come as a surprise: indeed, we showed that almost all the cohomological invariants of smooth hyperelliptic curves come from the morphism $\Hcal_g\to\Brm {\rm S}_{2g+2}$ induced by the Weierstrass divisor, which is \'{e}tale on the base. On the other hand, the ramification divisor of stable hyperelliptic curves is not \'{e}tale anymore, hence there is not a natural extension of the morphism $\Hcal_g\to\Brm {\rm S}_{2g+2}$ to $\overline{\Hcal}_g$.
	
For the same reason, when $g$ is odd we cannot expect that the invariant coming from $\Brm {\PGLt}$ extends to $\overline{\Hcal}_g$: the natural quotient by the hyperelliptic involution of a stable hyperelliptic curve with two or more components is a chain of rational curves, and it is easy to show that $w_2$ cannot be extended to these singular genus $0$ curves.

\begin{rmk}\label{rmk: admissible}
The compactification $\widetilde{\Hcal}_g$ by admissible coverings should behave much better from the point of view of cohomological invariants: there should exist an extended morphism $\widetilde{\Hcal}_g\to\Brm {\rm S}_{2g+2}$ which would allow us to extend the invariants $\alpha_1,\dots,\alpha_{g+1}$ from $\Hcal_g$ to $\widetilde{\Hcal}_g$. This will be the subject of a subsequent paper.

Note that while $\overline{\Hcal}_g$ and $\widetilde{\Hcal}_g$ share the same moduli space, the automorphism groups of the objects they parametrize are not the same in general, hence the stacks cannot be equal either. 

To see this, pick an algebraically closed field $K$ and consider a curve $X/K$ with two components, namely a smooth hyperelliptic curve $C$ of genus $g-1$ and a rational curve $E=\PP^1$ meeting the other component in two distinct points $p_1=(0:1)$ and $p_2=(1:0)$. Assume that the involution $\iota$ of $C$ exchanges $p_1$ and $p_2$. Then there is an admissible involution $\iota_X$ on $X$ acting as $\iota$ on $C$ and as $(x:y)\mapsto (y:x)$ on $E$, and $X\to X'=X/\iota_X$ is an admissible double covering of genus $g$, according to the definition given in \cite{HM}*{Section 4}. Its stable model $X^{\rm st}$ is the stable hyperelliptic curve obtained by contracting the rational component $E$.

By definition, the automorphism group of $X\to X'$ is generated by the automorphisms of $X$ that commute with the admissible involution. For an admissible double covering $X$ as above, when $C$ is generic, the automorphism group of its stable model $X^{\rm st}$ is generated by the hyperelliptic involution. On the other hand, the admissible covering has an additional non-trivial automorphism of order $2$, which acts as the identity on $C$ and as $(x:y)\mapsto (-x:y)$ on $E$. Therefore, the stacks $\overline{\Hcal}_g$ and $\widetilde{\Hcal}_g$ cannot be isomorphic.
\end{rmk}

\begin{lm}\label{lm:Hg prime}
Consider the open substack $\overline{\Hcal}_g'$ of $\overline{\Hcal}_g$ defined as:
\[ \overline{\Hcal}_g' := \overline{\Hcal}_g \smallsetminus \left( \cup_i \Xi_i \cup_j \Delta_j \right). \]
Then we have:
\[ \overline{\Hcal}_g'\simeq \left[ U_g/G \right] \]
where $G$ is ${\rm GL}_2$ when $g$ is even and ${\rm PGL}_2\times\Gm$ when $g$ is odd, the action is the same as in Theorem \ref{thm:presentation}, and $U_g$ is an open subset of $\bA^{2g+3}$ whose complement has codimension greater than $1$.
\end{lm}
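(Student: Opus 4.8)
The strategy is to extend the Arsie--Vistoli presentation of Theorem \ref{thm:presentation} across the single boundary divisor $\Xi_{\rm irr}$, treating the even and odd cases uniformly. Recall that $\Hcal_g=[\bA^{2g+3}_{\rm sm}/G]$, where $\bA^{2g+3}$ is the space of binary forms of degree $2g+2$ and $\bA^{2g+3}_{\rm sm}$ is the square-free locus, the universal family being the double cover $y^2=f$ of $\PP^1$. The points of $\overline{\Hcal}_g'$ are exactly the smooth hyperelliptic curves together with the irreducible double covers of an irreducible rational curve having at worst nodes, since every object lying in some $\Xi_i$ or $\Delta_j$ has a \emph{reducible} quotient (a chain of rational curves) and is therefore excluded. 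I would therefore set
\[ U_g:=\{\, f\in\bA^{2g+3} : f \text{ has no root of multiplicity }\geq 3\,\}, \]
which is open and $G$-invariant, as $G$ acts through $\PP^1$ and by scaling and hence preserves root multiplicities. Its complement is the affine cone over the image of $\PP^1\times\PP^{2g-1}\to\PP^{2g+2}$, $([l],[h])\mapsto[\,l^3h\,]$, which has codimension $2$; hence the complement of $U_g$ has codimension $>1$ as claimed, and $\bA^{2g+3}_{\rm sm}\subset U_g$.

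First I would produce the moduli map. Over $U_g$ the tautological form $f$ still defines the double cover $C_f:\{y^2=f\}$ of $\PP^1$, with the $G$-action lifting exactly as in Theorem \ref{thm:presentation}. A double root of $f$ at a point $p$ yields a node of $C_f$ over $p$: writing $f=(x-a)^2u$ with $u(a)\neq 0$, the equation factors étale-locally as $v_1v_2=0$, a node. Hence for every $f\in U_g$ the curve $C_f$ is an irreducible nodal curve of arithmetic genus $g\geq 2$, automatically stable, carrying the hyperelliptic involution $y\mapsto -y$ with quotient the irreducible $\PP^1$. This is a flat family of stable hyperelliptic curves over $U_g$, $G$-equivariant by construction, hence a morphism $\Phi:[U_g/G]\to\overline{\Hcal}_g$ restricting to the Arsie--Vistoli isomorphism on $\Hcal_g$. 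Since every $C_f$ has irreducible rational quotient, $\Phi$ avoids the $\Xi_i$ and $\Delta_j$ and factors through $\overline{\Hcal}_g'$.

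It then remains to show that $\Phi:[U_g/G]\to\overline{\Hcal}_g'$ is an isomorphism, which I expect to be the main obstacle: both sides are smooth, so the real content is matching their stack structures along $\Xi_{\rm irr}$ rather than merely exhibiting a bijection. On geometric points $\Phi$ is bijective, since a curve in $\overline{\Hcal}_g'$ is recovered from its branch divisor, a degree $2g+2$ divisor on $\PP^1$ with at worst double points, i.e. a point of $U_g$ determined up to the $G$-action. For étaleness along the boundary I would use an explicit versality computation: deforming $f$ so that a double root at $a$ splits, say $f_s=(x-a)^2u-su$, the family becomes étale-locally $v_1v_2=-s$, the miniversal deformation of a node, so $\Phi$ is transverse to $\Xi_{\rm irr}$ and étale along it. Combined with the fact that the hyperelliptic involution $y\mapsto -y$ accounts for the automorphisms on both sides, this promotes the bijective étale morphism $\Phi$ to an isomorphism of smooth stacks, yielding $\overline{\Hcal}_g'\simeq[U_g/G]$ with $U_g\subset\bA^{2g+3}$ of the required codimension.
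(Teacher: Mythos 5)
Your overall strategy --- define $U_g$ as the locus of forms with no triple root, check the codimension of its complement, and match $[U_g/G]$ with $\overline{\Hcal}_g'$ --- is the right one, and your codimension count (the cone over the image of $\PP^1\times\PP^{2g-1}\to\PP^{2g+2}$) agrees with the paper. But where the paper disposes of the stacky identification in one line, by observing that every object of $\overline{\Hcal}_g'$ is a uniform cyclic double cover of a genus-zero curve with branch divisor of length $2g+2$ having at most double points, and then invoking Arsie--Vistoli's Theorem 4.1, which identifies the \emph{full} stack $\mathcal{H}(1,2,2g+2)$ of such covers (with arbitrary, not necessarily square-free, branch divisor) with $\left[\bA^{2g+3}\smallsetminus\{0\}/G\right]$, you rebuild the isomorphism by hand --- and the final step of your argument has a genuine gap. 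To promote your bijective-on-geometric-points, \'etale morphism $\Phi$ to an isomorphism you need it to be fully faithful, i.e.\ you need the stabilizer $G_f$ to map isomorphically onto $\mathrm{Aut}(C_f)$ for \emph{every} point $f\in U_g$; the sentence ``the hyperelliptic involution accounts for the automorphisms on both sides'' asserts this rather than proves it. For smooth curves this is the classical centrality of the hyperelliptic involution, but on the boundary it requires an argument that every automorphism of the stable curve commutes with $\iota$ and descends to the quotient line preserving the branch divisor --- precisely the content that the citation to Arsie--Vistoli replaces.

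A second, related inaccuracy: your description of the points of $\overline{\Hcal}_g'$ as ``smooth curves together with \emph{irreducible} nodal double covers'' is wrong, and it undermines your stability claim. The open set $U_g$ contains forms such as $f=c\,h^2$ with $h$ square-free of degree $g+1$, whose associated curve $y^2=c\,h^2$ is (geometrically) \emph{reducible}: two rational components interchanged by $\iota$ and glued at $g+1$ nodes, with irreducible quotient $\PP^1$, hence lying in $\overline{\Hcal}_g'$ and not in any $\Xi_i$ or $\Delta_j$. Such curves are still stable (each component meets the other in $g+1\geq 3$ points), but your justification ``irreducible nodal of arithmetic genus $g\geq 2$, automatically stable'' does not apply to them, and they are also the points where the automorphism comparison above is least obvious. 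You also implicitly use, for surjectivity, that a stable curve in $\overline{\Hcal}_g'$ cannot have a branch divisor with a point of multiplicity $\geq 3$ (a triple root produces a cusp, not a node); this is true but should be said. None of these issues is fatal to the approach, but as written the proof is incomplete at exactly the step that carries the content of the lemma.
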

\begin{proof}
Observe that a curve in $\overline{\Hcal}_{g}'$ is a uniform cyclic covering of degree $2$ of the projective line, whose ramification divisor is a $0$-dimensional scheme of length $2g+2$ having at most double points: if $\mathcal{H}(1,2,2g+2)$ denotes the stack of uniform cyclic double coverings of the projective line with ramification divisor of length $2g+2$, then by \cite[Thm. 4.1]{ArsVis} we have 
\[\overline{\Hcal}_g'\hookrightarrow \mathcal{H}(1,2,2g+2) \simeq \left[ \mathbb{A}^{2g+3}\smallsetminus\{0\}/ G \right].\]
By construction, the image of $\overline{\Hcal}_g'$ in $\left[ \mathbb{A}^{2g+3}\smallsetminus\{0\}/ G \right]$ is the open substack $[U_g/G]$, where $U_g$ is the open subscheme of forms having at most double roots. The complement of $U_g$ in $\bA^{2g+3}$ has codimension $\geq 2$, thus concluding the proof.


\end{proof}

\begin{proof}[Proof of Theorem \ref{thm:Inv Hgbar}]

By Proposition \ref{prop:generic}, the pullback through the open immersion gives us an injection $\Inv(\overline{\Hcal}_g)\hookrightarrow\Inv(\overline{\Hcal}_g')$.

By Lemma \ref{lm:Hg prime} we know that $\overline{\Hcal}_g'\simeq \left[ U_g/G \right]$, with $U_g\subset\bA^{2g+3}$. As the complement of $U_g$ in $\bA^{2g+3}$ has codimension $>1$ we can apply Proposition \ref{Prop:homot inv} to obtain the following:
\[  A^0_G(U_g,{\rm H}_{\ell})\simeq A^0_G(\bA^{2g+3},{\rm H}_{\ell}) \simeq A^0_G(\Spec(k),{\rm H}_{\ell}). \]
As already observed before, being ${\rm GL}_2$ special implies that the universal ${\rm GL}_2$-torsor $\Spec(k)\to\Brm {\rm GL}_2$ is smmooth-Nisnevich, hence the pullback of cohomological invariants along this map is injective (Theorem \ref{thm:sheaf}). Therefore, we deduce that $ {\rm Inv}^{\bullet}(\overline{\Hcal}_g',{\rm H}_{\ell})\simeq \Het_{\ell}(k)$ for $g$ even.

For the $g$ odd case, the reasoning above and Lemma \ref{lm:GL2PGL2} give us the result immediately for odd $\ell$, and for $\ell=2$ we get:
\[ \Inv(\overline{\Hcal}_g) \subseteq \Inv(\overline{\Hcal}_g')\subseteq \Inv({\rm B} {\rm PGL}_2 \times \Gm) = \Het_2(k)\oplus \Het_2(k) \cdot w_2 \]
where $w_2$ is the cohomological invariant coming from $\Brm\PGLt$. We only have to show that this invariant does not extend to the whole $\overline{\Hcal}_g$.

 We write $\bA(2,2)$ for the affine space of homogeneous forms of degree $2$ in three variables, we write $\bA(2,2)_r$ for the locally closed subscheme of forms of rank $r$, and $\bA(2,2)_{\left[3,2\right]}$ for the open subscheme of forms of rank at least $2$. By \cite{DilCohHypOdd}*{Sec. 3} there is a natural ${\rm GL}_3$ action on $\bA(2,2)$ which fixes the subschemes we defined, namely the one defined by the formula
\[ A\cdot f(x,y,z) := {\rm det}(A)f(A^{-1}(x,y,z)).\]
The quotient $\left[ \bA(2,2)/{\rm GL}_3 \right]$ is the moduli stack of conic bundles. The open subset $\left[ \bA(2,2)_{3} / \GLt \right]$ parametrizes smooth conics, and consequently is isomorphic to ${\rm BPGL}_2$. The open substack $\left[ \bA(2,2)_{\left[3,2\right]} / \GLt \right]$ parametrizes conics of rank $\geq 2$.

Define $\overline{\Hcal}_g^{[0,1]}$ as the stack of stable hyperelliptic curves of genus $g$ having at most one node, and set
\[ \overline{\Hcal}_g^0 := \overline{\Hcal}_g^{[0,1]}\cap \overline{\Hcal}_g', \quad \overline{\Hcal}_g^1:= \overline{\Hcal}_g^{[0,1]} \smallsetminus \overline{\Hcal}_g^0,\]
where $\overline{\Hcal}_g'$ is the stack defined in Lemma \ref{lm:Hg prime}.
Given a stable hyperelliptic curve, we can consider its quotient by the hyperelliptic involution, which we denote $\Gamma$: in general, $\Gamma$ will be a chain of rational curves. If we restrict to the points in $\overline{\Hcal}_g^{[0,1]}$, then $\Gamma$ will be a curve of genus $0$ having at most one node.

Therefore, we can define a morphism of stacks
\[f: \overline{\Hcal}_g^{[0,1]} \longrightarrow \left[\bA(2,2)_{[3,2]}/\GLt \right]\]
by sending a hyperelliptic curve to its quotient by the hyperelliptic involution. Observe that the preimage of $\Brm {\rm PGL}_2\simeq \left[\bA(2,2)_3/\GLt \right]$ is equal to $\overline{\Hcal}_g^0$, and the preimage of $\left[ \bA(2,2)_2/\GLt \right]$ is $\overline{\Hcal}_g^1$. In other terms, we have a cartesian diagram
\[ \xymatrix{\overline{\Hcal}_g^1 \ar[r] \ar[d] & \overline{\Hcal}_g^{[0,1]} \ar[d] \\ \left[ \bA(2,2)_2/\GLt \right] \ar[r] & \left[ \bA(2,2)_{[3,2]}/\GLt\right]}. \]
Combining this diagram with the localization exact sequence we get the following commutative diagram of equivariant Chow groups with coefficients
\[ \xymatrix{  A^0(\left[ \bA(2,2)_{3}/\GLt\right]) \ar[r]^{\partial} \ar[d]^{f^*} & A^0( \left[ \bA(2,2)_{2}/\GLt\right]) \ar[d]^{f^*}  \\ A^0(\overline{\Hcal}_g^{0}) \ar[r]^{\partial} & A^0(\overline{\Hcal}_g^{1}) }  \]
where the groups on the bottom row are well defined because both stacks are quotient stacks.

The vertical arrow on the right is injective: the map $f$ is representable and smooth, and given a $K$-point of $\left[ \bA(2,2)_{2}/\GLt\right]$, i.e. a ternary quadric $\Gamma$ of rank $2$ defined over some field $K$, we can always construct a stable hyperelliptic curve whose quotient by the hyperelliptic involution is exactly $\Gamma$ or, in other terms, we can always lift a map $\Spec(K)\to \left[ \bA(2,2)_{2}/\GLt\right]$ to a map to $\overline{\Hcal}_g^{1}$. Henceforth, the morphism $f$ is smooth-Nisnevich.

Moreover, consider the localization exact sequence
\[
0 \to A^0_{\GLt}(\bA(2,2)_{\left[3,2\right]} \to A^0_{\GLt}(\bA(2,2)_3) \xrightarrow{\partial} A^0_{\GLt}(\bA(2,2)_2). 
\]
By combining homotopy invariance (Proposition \ref{Prop:homot inv}) and the fact that $\GLt$ is special, we have that $A^0_{\GLt}(\bA(2,2))\simeq\Het_2(k)$. Consequently, again by Proposition \ref{Prop:homot inv} we have
\[ A^0\left(\left[ \bA(2,2)_{[3,2]}/\GLt\right]\right)\simeq \Het_2(k) \]
as the complement of $\bA(2,2)_{[3,2]}$ in $\bA(2,2)$ has codimension $>1$. This shows that the element $w_2 \in A^0_{\GLt}(\bA(2,2)_3)$ cannot map to zero in $A^0_{\GLt}(\bA(2,2)_2)$.

Putting together these last two observations, we conclude that $\partial w_2\neq 0$ in $A^0(\overline{\Hcal}_g^{1})$, hence $w_2$ does not extend to an invariant of $\overline{\Hcal}_g^{[0,1]}$, so it cannot extend to $\overline{\Hcal}_g \supset \overline{\Hcal}_g^{[0,1]}$. This concludes the proof.
\end{proof}

\section{Torsion in the Picard group of $\overline{\Hcal}_g$}\label{app:PicHbar}


Let $\lambda$ denote the class of the Hodge line bundle in the Picard group of $\overline{\Hcal}_g$. When the base field has characteristic zero, Cornalba and Harris proved in \cite{CH} that the following formula holds up to torsion:
\begin{equation}\label{eq:CH}
(8g+4)\lambda=g[\Xi_{irr}] + 2 \sum_{i=1}^{\lfloor \frac{g-1}{2} \rfloor} (i+1)(g-1)[\Xi_i] + 4 \sum_{j=1}^{\lfloor \frac{g}{2} \rfloor} j(g-j)[\Delta_j].
\end{equation}
The formula (\ref{eq:CH}) is usually called \emph{Cornalba-Harris equality}. In \cite{Yam} the Cornalba-Harris equality was shown to hold (up to torsion) also over fields of positive characteristic. 

Afterward, assuming that the base field is $k=\mathbb{C}$, Cornalba showed in \cite{Cor} that (\ref{eq:CH}) holds integrally, and he suggested that the formula should be true over any base field.  
The main result of this Appendix is the following.
\begin{thm}\label{thm:Pic}
Let $k$ be a field of characteristic $q\neq 2$. Then ${\rm Pic}(\overline{\Hcal}_g)$ is torsion free. In particular, the Cornalba-Harris equality (\ref{eq:CH}) holds integrally.
\end{thm}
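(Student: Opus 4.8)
The plan is to reduce the integral Cornalba--Harris equality to the torsion-freeness of ${\rm Pic}(\overline{\Hcal}_g)$, and to obtain the latter from a localization sequence built on the presentation of Lemma \ref{lm:Hg prime}. Indeed, by \cite{CH} and \cite{Yam} the relation (\ref{eq:CH}) already holds modulo torsion over any field of characteristic $\neq 2$; hence, once we know that ${\rm Pic}(\overline{\Hcal}_g)$ has no torsion, the equality holds on the nose. So the whole problem is to prove that ${\rm Pic}(\overline{\Hcal}_g)$ is torsion free.

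First I would compute the Picard group of the open substack $\overline{\Hcal}_g'$ of Lemma \ref{lm:Hg prime}. There $\overline{\Hcal}_g'\simeq[U_g/G]$ with $U_g\subset\bA^{2g+3}$ an open whose complement has codimension $>1$, so the classical homotopy invariance of equivariant Chow groups (the $A^1$-analogue of Proposition \ref{Prop:homot inv}) gives
\[
{\rm Pic}(\overline{\Hcal}_g')=A^1_G(U_g)=A^1_G(\bA^{2g+3})={\rm Pic}(\Brm G)=\widehat{G}.
\]
Since $\widehat{{\rm GL}_2}=\ZZ$ and $\widehat{{\rm PGL}_2\times\Gm}=\ZZ$, in both parities ${\rm Pic}(\overline{\Hcal}_g')\cong\ZZ$ is free of rank one.

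Next I would feed this into the localization sequence for the closed complement $\overline{\Hcal}_g\smallsetminus\overline{\Hcal}_g'=\bigcup_i\Xi_i\cup\bigcup_j\Delta_j$; note that $\Xi_{\rm irr}$ survives inside $\overline{\Hcal}_g'$ and is already accounted for by the rank-one group above. This complement is a union of exactly $g-1$ irreducible divisors, since $\lfloor\frac{g-1}{2}\rfloor+\lfloor\frac{g}{2}\rfloor=g-1$, and writing each $A^0$ of an irreducible component as $\ZZ$ the sequence reads
\[
\ZZ^{\,g-1}\xrightarrow{\ i_*\ }{\rm Pic}(\overline{\Hcal}_g)\xrightarrow{\ j^*\ }{\rm Pic}(\overline{\Hcal}_g')\cong\ZZ\to 0.
\]
Because the target is free, $j^*$ splits and ${\rm Pic}(\overline{\Hcal}_g)\cong\operatorname{im}(i_*)\oplus\ZZ$; moreover any torsion class is killed by $j^*$ and hence lies in $\operatorname{im}(i_*)$. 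Thus it suffices to show that $i_*$ is injective, equivalently that the boundary classes $[\Xi_i],[\Delta_j]$ are $\mathbb{Q}$-linearly independent: then $\ker(i_*)\otimes\mathbb{Q}\hookrightarrow\ker(i_*\otimes\mathbb{Q})=0$ forces $\ker(i_*)=0$, so $\operatorname{im}(i_*)\cong\ZZ^{\,g-1}$ and ${\rm Pic}(\overline{\Hcal}_g)\cong\ZZ^{\,g}$ is torsion free.

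The main obstacle is precisely this independence, equivalently the assertion that $\dim_\mathbb{Q}{\rm Pic}(\overline{\Hcal}_g)_\mathbb{Q}=g$. I would establish it by an equivariant analysis of the boundary stratum, in the spirit of the proof of Theorem \ref{thm:Inv Hgbar}: on the locus where the quotient conic $C'=C/\iota$ degenerates to a pair of lines, the branch divisor splits according to how its $2g+2$ points are distributed between the two components, and this discrete invariant separates the divisors $\Xi_i$ and $\Delta_j$. Restricting to this stratum one obtains independent discriminant-type classes by an argument parallel to Corollary \ref{cor:inv Pn}; alternatively one may intersect with complete one-parameter test families degenerating a fixed hyperelliptic curve one stratum at a time, whose self-intersection contributions are governed by the non-torsion classes $-\psi_p-\psi_q$ at the nodes, and check that the resulting intersection matrix is nonsingular. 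Either way the delicate point is the linear-algebra bookkeeping carried out uniformly in $g$; everything else in the argument is formal.
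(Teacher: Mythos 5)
Your reduction is structurally sound up to its last step: the identification ${\rm Pic}(\overline{\Hcal}_g')\cong A^1_G(U_g)\cong\widehat{G}\cong\ZZ$ is correct (the complement of $U_g$ in $\bA^{2g+3}$ has codimension $\geq 2$, and $\widehat{{\rm GL}_2}=\widehat{{\rm PGL}_2\times\Gm}=\ZZ$), the count of $g-1$ boundary components is right, and the formal argument that $\mathbb{Q}$-linear independence of $[\Xi_1],\dots,[\Xi_{\lfloor (g-1)/2\rfloor}],[\Delta_1],\dots,[\Delta_{\lfloor g/2\rfloor}]$ forces $i_*$ to be injective and hence ${\rm Pic}(\overline{\Hcal}_g)\cong\ZZ^g$ is valid. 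The problem is that this independence is not a bookkeeping detail: it is the entire content of the theorem in your formulation, and you do not prove it. Neither of your two suggested routes closes the gap. The analogy with Corollary \ref{cor:inv Pn} does not transfer, because that corollary concerns degree-zero invariants and surjectivity of boundary maps onto $A^0$ of the strata, whereas here you need a \emph{lower} bound on the rank of $A^1$, i.e.\ nontriviality of pushforwards into ${\rm Pic}$ --- exactly the kind of statement that the arguments of Section 2 are designed to avoid (indeed Corollary \ref{cor:inv Pn} proves that certain pushforwards into $A^1$ \emph{vanish}). The test-family route is essentially Cornalba's original proof over $\mathbb{C}$: one must construct complete one-parameter families hitting each stratum, compute the intersection matrix against $\lambda$ and the boundary classes, and verify nondegeneracy; carrying this out uniformly in $g$ and in positive characteristic is precisely the hard work the theorem is meant to bypass, and simply asserting that the matrix is nonsingular is not a proof.

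The paper takes a genuinely different and much shorter route that never touches the rank of ${\rm Pic}(\overline{\Hcal}_g)$ or the independence of boundary divisors. Via the Kummer sequence, $n$-torsion in ${\rm Pic}(\overline{\Hcal}_g)$ is controlled by ${\rm H}^1_{\rm fppf}(\overline{\Hcal}_g,\mu_n)$, so it suffices to show this group equals $k^*/(k^*)^n$ for every $n$. This is done by (i) showing ${\rm H}^1_{\rm fppf}(-,\mu_n)$ is invariant under vector bundles and under removing closed substacks of codimension $\geq 2$, which combined with Lemma \ref{lm:Hg prime} gives ${\rm H}^1_{\rm fppf}(\overline{\Hcal}_g',\mu_n)={\rm H}^1_{\rm fppf}(\Brm G,\mu_n)=k^*/(k^*)^n$; and (ii) proving that for a smooth separated irreducible scheme the restriction of $\mu_n$-torsors to a dense open is \emph{injective} (an $n$-th root of a trivialization extends over codimension-one points of the boundary), which after an equivariant approximation forces ${\rm H}^1_{\rm fppf}(\overline{\Hcal}_g,\mu_n)\hookrightarrow k^*/(k^*)^n$. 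The asymmetry is the point: killing torsion only requires an injectivity statement on the open locus, not a computation of the full Picard group. If you want to salvage your approach you would need to either import Cornalba's independence result and re-establish it in characteristic $p$, or find a characteristic-free argument for the nonvanishing of the boundary classes; as written, the proposal assumes the conclusion at its critical step.
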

When $g=2$, this result can be obtained as a special case of broader results. Larson, in \cite{Lars}, computed the Chow ring of $\overline{\mathcal{H}}_2$, and consequently its Picard group, over a field of characteristic not $2$ or $3$. Recently, Fringuelli and Viviani \cite{FriViv} computed the Picard group of the stack $\overline{\mathcal{M}}_{g,n}$ of stable $n$-pointed curves over any Noetherian base where $2$ is invertible, and over any field when $g \leq 5$. In particular, they compute ${\rm Pic}(\overline{\Hcal}_2)$ over any field. 

Recall that given an algebraic stack $\mathcal{S}$ we can see ${\rm H}^1_{\rm fppf}(\mathcal{S}, \mu_n)$ as the group of pairs $(\mathcal{L},\alpha)$ where $\mathcal{L}$ is an invertible sheaf and $\alpha:\mathcal{L}^{\otimes n} \rightarrow \OO_{\mathcal{S}}$ is an isomorphism. Morever, we have a Kummer exact sequence
\[
0 \rightarrow \Gamma(\mathcal{S},\mathcal{O}^*_{\mathcal{S}})/\Gamma(\mathcal{S},\mathcal{O}^*_{\mathcal{S}})^n \rightarrow {\rm H}^1_{\rm fppf}(\mathcal{S}, \mu_n) \rightarrow {\rm Pic}(\mathcal{S}) \xrightarrow{\cdot n} {\rm Pic}(\mathcal{S}). 
\]
Note that $k^*/(k^*)^n$ injects into $\Gamma(\mathcal{O}^*_{\overline{\Hcal}_g}(\overline{\Hcal}_g))/\Gamma(\mathcal{O}^*_{\overline{\Hcal}_g}(\overline{\Hcal}_g))^n$, because $\overline{\Hcal}_g$ has a rational point, and if there were a global $n$-th root of an element of $k^*$ defined over $\overline{\Hcal}_g$ then there would have to be one at every residue field. 
Then by the exact sequence above if we show that for all $n$ we have ${\rm H}^1_{\rm fppf}(\overline{\Hcal}_g, \mu_n)=k^*/(k^*)^n$ it will immediately imply that ${\rm Pic}(\overline{\Hcal}_g)$ has no torsion.

The approach will be similar to what we did for cohomological invariants; we will show that the statement holds for the open substack $\overline{\Hcal}_g '$ defined in Lemma \ref{lm:Hg prime}, and then that this implies the same for $\overline{\Hcal}_g$.



We begin by proving some simple invariance results for ${\rm H}^1_{\rm fppf}(-, \mu_n)$.

\begin{lm}\label{H1inv}
Let $\mathcal{Y} \xrightarrow{f} \mathcal{X}$ be a morphism of smooth algebraic stacks over $k$.
\begin{itemize}
\item If $f$ is a vector bundle then $f^*:{\rm H}^1_{\rm fppf}(\mathcal{X},\mu_n)\rightarrow {\rm H}^1_{\rm fppf}(\mathcal{Y},\mu_n)$ is an isomorphism.
\item If is an open immersion whose complement has codimension $\geq 2$ then $f^*:{\rm H}^1_{\rm fppf}(\mathcal{X},\mu_n)\rightarrow {\rm H}^1_{\rm fppf}(\mathcal{Y},\mu_n)$ is an isomorphism.
\end{itemize}
\end{lm}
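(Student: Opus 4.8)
The plan is to reduce both statements to the Kummer exact sequence recalled just before the lemma. Writing its image in ${\rm Pic}$ as the $n$-torsion subgroup, it becomes a short exact sequence
\[
0 \rightarrow \Gamma(\mathcal{S},\mathcal{O}^*_{\mathcal{S}})/\Gamma(\mathcal{S},\mathcal{O}^*_{\mathcal{S}})^n \rightarrow {\rm H}^1_{\rm fppf}(\mathcal{S}, \mu_n) \rightarrow {\rm Pic}(\mathcal{S})[n] \rightarrow 0
\]
which is functorial in $\mathcal{S}$. Applying it to $\mathcal{X}$ and to $\mathcal{Y}$ and using the pullbacks $f^*$ as the vertical maps, I obtain a commutative ladder of short exact sequences. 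By the five lemma, in each of the two cases it then suffices to show that $f^*$ is an isomorphism on the two outer terms: on global units modulo $n$-th powers, and on the $n$-torsion of the Picard group.

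For the first bullet, where $f$ is a vector bundle, I would establish both outer isomorphisms directly. The pullback along a vector bundle is an isomorphism on Picard groups of smooth stacks by homotopy invariance of ${\rm Pic}$, hence also on ${\rm Pic}[n]$. For the units, writing $\mathcal{Y}$ as the total space of a bundle $E$ on $\mathcal{X}$, one has $f_*\mathcal{O}_{\mathcal{Y}}=\bigoplus_{d\geq 0}{\rm Sym}^d E^\vee$; a global unit restricts on each fibre to an invertible function on an affine space, hence to a constant, so its positive-degree components vanish and it descends to a unit on $\mathcal{X}$. Thus $\Gamma(\mathcal{Y},\mathcal{O}^*)=f^*\Gamma(\mathcal{X},\mathcal{O}^*)$, giving the isomorphism on units modulo $n$-th powers.

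For the second bullet, where $f$ is an open immersion with complement $Z$ of codimension $\geq 2$, both outer isomorphisms come from purity on the smooth (hence normal) stack $\mathcal{X}$. Algebraic Hartogs ensures that every regular function on $\mathcal{Y}=\mathcal{X}\smallsetminus Z$, together with its inverse, extends uniquely over $Z$, so $\Gamma(\mathcal{X},\mathcal{O}^*)=\Gamma(\mathcal{Y},\mathcal{O}^*)$. Likewise, removing a closed substack of codimension $\geq 2$ from a smooth, locally factorial stack does not change the Picard group, since every line bundle on $\mathcal{Y}$ corresponds to a Cartier divisor whose closure is again Cartier on $\mathcal{X}$; hence $f^*$ is an isomorphism on ${\rm Pic}$ and on ${\rm Pic}[n]$.

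The main obstacle I anticipate is making the Hartogs and purity arguments precise in the stacky setting rather than for schemes. To handle this I would pass to a smooth atlas $P\to\mathcal{X}$ by a scheme and its groupoid $P\times_{\mathcal{X}}P$: smoothness of $\mathcal{X}$ lifts to smoothness, and hence normality and local factoriality, of $P$, while codimension is preserved under the smooth pullback, so the preimage of $Z$ in $P$ again has codimension $\geq 2$. The statements about extending units and divisors can then be checked on $P$ and on $P\times_{\mathcal{X}}P$ using the classical scheme-theoretic versions, and descend. The vector-bundle case reduces to an atlas in the same way, where the computation of units and the homotopy invariance of ${\rm Pic}$ are standard.
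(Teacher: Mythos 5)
Your proposal is correct and follows essentially the same route as the paper: the paper's proof likewise combines the Kummer sequence with the fact that $f$ induces an isomorphism on Picard groups and on global sections of $\mathcal{O}^*$, simply citing \cite{PTT}*{Prop. 1.9} for the latter facts where you spell out the fibrewise-units and Hartogs/purity arguments on an atlas. The extra detail you supply is sound but not a different method.
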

\begin{proof}
The two points follow from the Kummer sequence, together with the fact that $f$ induces an isomorphism on Picard groups \cite{PTT}*{Prop. 1.9} and on global sections of $\OO^*$.
\end{proof}

The Lemma we just proved will allow us to reduce the computation of ${\rm H}^1_{\rm fppf}(\overline{\Hcal}_g',\mu_n)$ to the computation of ${\rm H}^1_{\rm fppf}(\Brm G,\mu_n)$ for $G={\rm GL}_2$ or $G={\rm PGL}_2 \times \Gm$. These groups can be determined by using equivariant approximations. The next Lemma relates the $\mu_n$-torsors over a scheme with those over an open subset.

\begin{lm}\label{H1inj}
Let $X$ be a smooth, irreducible and separated scheme. If $f:U\rightarrow X$ is an open immersion then $f^*:{\rm H}^1_{\rm fppf}(X,\mu_n)\rightarrow {\rm H}^1_{\rm fppf}(U,\mu_n)$ is injective.
\end{lm}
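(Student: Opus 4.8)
The plan is to use the description of ${\rm H}^1_{\rm fppf}(X,\mu_n)$ recalled above as the group of pairs $(\mathcal{L},\alpha)$, where $\mathcal{L}$ is an invertible sheaf on $X$ and $\alpha\colon\mathcal{L}^{\otimes n}\xrightarrow{\sim}\OO_X$ is a trivialization of its $n$-th power, the pullback $f^*$ being given by $(\mathcal{L},\alpha)\mapsto(\mathcal{L}|_U,\alpha|_U)$. Injectivity of $f^*$ is then equivalent to the statement that a torsor whose restriction to $U$ is trivial must already be trivial on $X$. So I would start from a pair $(\mathcal{L},\alpha)$ together with an isomorphism $\psi\colon\mathcal{L}|_U\xrightarrow{\sim}\OO_U$ satisfying $\psi^{\otimes n}=\alpha|_U$ (which is exactly what it means for $(\mathcal{L}|_U,\alpha|_U)$ to be the trivial torsor), and produce from it a global trivialization of $(\mathcal{L},\alpha)$.

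For the key steps: since $X$ is integral and $U$ is a nonempty, hence dense, open, the section $s:=\psi^{-1}(1)$ of $\mathcal{L}$ over $U$ is a nowhere-vanishing regular section there, and I view it as a rational section of $\mathcal{L}$ on all of $X$; as it has neither zeros nor poles on $U$, its divisor ${\rm div}(s)$ is supported on the codimension-one part of $X\smallsetminus U$. Now I exploit the compatibility with $\alpha$: the rational function $h:=\alpha(s^{\otimes n})\in k(X)^*$ restricts to $1$ on $U$, because $\alpha|_U=\psi^{\otimes n}$ and $\psi(s)=1$, and since $U$ is dense this forces $h=1$ identically. As $\alpha$ is an isomorphism of invertible sheaves it is locally multiplication by a unit, hence preserves divisors of rational sections, so $n\,{\rm div}(s)={\rm div}(s^{\otimes n})={\rm div}(h)=0$, whence ${\rm div}(s)=0$ because divisors form a torsion-free group. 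Smoothness of $X$, hence local factoriality, then guarantees that a rational section with trivial divisor is a global generator, so $s$ defines an isomorphism $\OO_X\xrightarrow{\sim}\mathcal{L}$; combined with $\alpha(s^{\otimes n})=1$ this exhibits $(\mathcal{L},\alpha)$ as the trivial $\mu_n$-torsor, concluding the argument.

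The main obstacle is conceptual rather than computational: one is tempted to run the snake lemma on the two Kummer sequences for $X$ and $U$, but this fails directly, because the restriction map ${\rm Pic}(X)\to{\rm Pic}(U)$ is in general far from injective, its kernel being generated by the classes of the codimension-one components of $X\smallsetminus U$, so ${\rm Pic}(X)[n]\to{\rm Pic}(U)[n]$ can genuinely have nontrivial kernel (for instance an $n$-torsion bundle $\OO_X(P-Q)$ on a curve restricted to $X\smallsetminus\{P,Q\}$). The point of the argument above is that the extra rigidity carried by the trivialization $\alpha$ of the $n$-th power is precisely what rules out these bad cases: it is the identity $h=1$, and not merely the triviality of $\mathcal{L}|_U$, that forces ${\rm div}(s)=0$. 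The only hypotheses I genuinely use are irreducibility, to pass from $h|_U=1$ to $h=1$, and smoothness, for the divisor–line bundle dictionary; separatedness plays no essential role beyond keeping us in the setting where these tools apply.
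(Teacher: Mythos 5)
Your proof is correct and follows essentially the same route as the paper's: both arguments reduce to the observation that the compatibility $\psi^{\otimes n}=\alpha|_U$ forces the trivializing section to have valuation zero along every codimension-one component of $X\smallsetminus U$ (you package this as ${\rm div}(s)=0$, while the paper checks it on the DVR at each codimension-one point via the ``ramification of $\beta$ versus $\alpha$'' remark), after which one extends across the remaining codimension $\geq 2$ locus using smoothness/normality. The only difference is bookkeeping --- global divisors versus local valuations --- so nothing of substance separates the two arguments.
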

\begin{proof}

Assume that we have an element $(\mathcal{L},\alpha)$ whose restriction to $U$ is trivial. This amounts to the map $\alpha$ having an $n$-th root $\beta$ defined over $U$. We want to show that $\beta$ extends to $X$.

The question is local on $X$; given two extensions $\beta_{U'}, \beta_{U''}$ defined respectively on open subsets $U',U'' \supset U$, they agree on $U \cap U'$ as $X$ is reduced and separated and thus glue to an extension on $U' \cup U''$. Using this, we can reduce to the spectrum of the DVR $\mathcal{O}_{X,p}$ where $p$ is a closed point of codimension one. If we can extend $\beta$ to ${\rm Spec}(\mathcal{O}_{X,p})$, then we can extend it to some open subset of $X$ containing $U$ and $p$, and consequently to all points of codimension $1$. Now, an isomorphism of line bundles outside of a closed subset of codimension $\geq 2$ always extends to an isomorphism, so this proves our claim.

Passing to the spectrum of $\mathcal{O}_{X,p}$, the line bundle is automatically trivial, $\alpha \in \OO^*(X)$ and $\beta$ is a $n$-th root of $\alpha$ in $k(X)$. We are left with proving that we can extend $\beta$ to the closed point $p$. This can clearly be done, as if the ramification of $\beta$ at $p$ was nonzero, the same would be true for $\alpha$.
\end{proof}

Now all that is left is to compute the group of $\mu_n$-torsors of $\Brm G$.

\begin{lm}\label{H1triv}
Let $G$ be either ${\rm GL}_2$ or ${\rm PGL}_2 \times \Gm$. Then 
\[ {\rm H}^1_{\rm fppf}(\Brm G,\mu_n)=k^*/(k^*)^n.\]
\end{lm}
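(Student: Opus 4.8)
The plan is to reduce the computation on the classifying stack to a smooth quotient \emph{scheme} by means of an equivariant approximation, and then to extract the answer from the Kummer sequence. Since both groups $G$ are smooth, affine and connected, I would choose a linear representation $V$ of $G$ together with a $G$-invariant open subset $U\subseteq V$ on which $G$ acts freely, with $V\smallsetminus U$ of codimension $\geq 2$ and with $X:=U/G$ a smooth scheme (such approximations exist by \cite{EG}*{Lemma 9}). The projection $[V/G]\to\Brm G$ is then a vector bundle and $[U/G]\hookrightarrow[V/G]$ is an open immersion whose complement has codimension $\geq 2$, so applying the two bullets of Lemma \ref{H1inv} in turn yields
\[ {\rm H}^1_{\rm fppf}(\Brm G,\mu_n)\cong {\rm H}^1_{\rm fppf}([V/G],\mu_n)\cong {\rm H}^1_{\rm fppf}([U/G],\mu_n)={\rm H}^1_{\rm fppf}(X,\mu_n). \]
The same geometric facts used in the proof of Lemma \ref{H1inv} (namely that these maps induce isomorphisms on Picard groups \cite{PTT}*{Prop. 1.9} and on global units) identify $\Gamma(X,\OO^*)$ with $\Gamma(\Brm G,\OO^*)$ and ${\rm Pic}(X)$ with ${\rm Pic}(\Brm G)$.

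With this reduction in hand, I would feed the scheme $X$ into the Kummer sequence
\[ 0\to \Gamma(X,\OO^*)/\Gamma(X,\OO^*)^n\to {\rm H}^1_{\rm fppf}(X,\mu_n)\to {\rm Pic}(X)[n]\to 0 \]
and compute the two outer terms. For the units, I note that $\Gamma(X,\OO^*)=\Gamma(U,\OO^*)^G$; since $V$ is an affine space and $V\smallsetminus U$ has codimension $\geq 2$ we have $\Gamma(U,\OO^*)=\Gamma(V,\OO^*)=k^*$, and taking $G$-invariants leaves $k^*$. Hence the left-hand term is exactly $k^*/(k^*)^n$, the group we are aiming for.

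It therefore remains to show that ${\rm Pic}(X)[n]=0$, and this is the crux of the argument. Using the identification ${\rm Pic}(X)\cong {\rm Pic}(\Brm G)$ together with the standard fact that line bundles on the classifying stack of a connected group are its characters, ${\rm Pic}(\Brm G)\cong {\rm Hom}(G,\Gm)$, I reduce to computing the character group. For $G={\rm GL}_2$ this group is $\ZZ$, generated by $\det$; for $G={\rm PGL}_2\times\Gm$ it is ${\rm Hom}({\rm PGL}_2,\Gm)\times{\rm Hom}(\Gm,\Gm)=0\times\ZZ\cong\ZZ$, the first factor vanishing because ${\rm PGL}_2$ is semisimple and hence has no nontrivial characters. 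In both cases ${\rm Pic}(X)\cong\ZZ$ is torsion free, so ${\rm Pic}(X)[n]=0$ and the Kummer sequence collapses to ${\rm H}^1_{\rm fppf}(X,\mu_n)\cong k^*/(k^*)^n$, which combined with the first display proves the lemma. The main obstacle is precisely this torsion-freeness of ${\rm Pic}(\Brm G)$: everything else is formal, but without it the $n$-torsion of the Picard group could contribute extra classes and the answer would differ from $k^*/(k^*)^n$.
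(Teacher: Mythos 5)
Your proposal is correct and follows essentially the same route as the paper: the Kummer sequence reduces everything to showing that $\Gamma(\Brm G,\OO^*)=k^*$ and that ${\rm Pic}(\Brm G)$ is torsion free. The only difference is that where the paper simply cites \cite{MV} for the torsion-freeness of the Picard group, you derive it directly from the identification ${\rm Pic}(\Brm G)\cong{\rm Hom}(G,\Gm)$ and the computation of the character groups (namely $\ZZ$ in both cases), which makes the argument self-contained; your detour through an equivariant approximation is harmless but not needed, since the Kummer sequence is stated in the paper for algebraic stacks.
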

\begin{proof}
This is an easy consequence of the Kummer exact sequence and the fact that the Picard group of $\Brm G$ is torsion-free \cite{MV}.
\end{proof}

\begin{proof}[Proof of Theorem \ref{thm:Pic}]
We have seen that there is an open subset $\overline{\Hcal}_g '$ of $\overline{\Hcal}_g$ which is isomorphic to $\left[ U_g/G\right]$, where $U_g$ is an open subset of $\mathbb{A}^{2g+3}$ whose complement has dimension greater than $1$. By Lemmas \ref{H1inv}, \ref{H1triv} we know that $ {\rm H}^1_{\rm fppf}(\overline{\Hcal}_g ',\mu_n)=k^*/(k^*)^n$.

Moreover, $\overline{\Hcal}_g$ is a smooth, Deligne-Mumford stack \cite{Yam}*{Rmk. 1.4}, and it is closed inside $\overline{\mathcal{M}}_g$, so it is separated and has a presentation as a quotient by a linear algebraic group over $k$ coming from the well-known one of $\overline{\mathcal{M}}_g$ \cite{DelMum}. 

Using this, we can apply equivariant approximation and Lemma \ref{H1inv} to get smooth, separated schemes $X^0 \subset X$ such that $X^0$ is open in $X$ and that their first cohomology groups with coefficients in $\mu_n$ are isomorphic to those of $\overline{\Hcal}_g '$ and $\overline{\Hcal}_g$ respectively. Then Lemma \ref{H1inj} allows us to conclude immediately.
\end{proof}

\begin{bibdiv}
	\begin{biblist}
		\bib{ArsVis}{article}{			
			author={Arsie, A.},
			author={Vistoli, A.},			
			title={Stacks of cyclic covers of projective spaces},			
			journal={Compos. Math.},
			volume={140},			
			date={2004},			
			number={3}	}
		\bib{Cor}{article}{
			author={Cornalba, M.},
			title={The Picard group of the moduli stack of stable hyperelliptic
				curves},
			journal={Atti Accad. Naz. Lincei Rend. Lincei Mat. Appl.},
			volume={18},
			date={2007},
			number={1},
		}
		\bib{CH}{article}{
			author={Cornalba, M.},
			author={Harris, J.},
			title={Divisor classes associated to families of stable varieties, with
				applications to the moduli space of curves},
			journal={Ann. Sci. \'{E}cole Norm. Sup. (4)},
			volume={21},
			date={1988},
			number={3},
		}
		\bib{DelMum}{article}{
		author={Deligne, P.},
		author={Mumford, D.},
		title={The irreducibility of the space of curves of given genus},
		journal={Publ. Math. Inst. Hautes {\'E}tudes Sci.},
		volume={36},
		pages={75-109},
		date={1969},
		}	
	    \bib{DilChowHyp}{article}{			
			author={Di Lorenzo, A.},			
			title={The Chow ring of the stack of hyperelliptic curves of odd genus},			
			journal={Int. Math. Res. Not. IMRN},
			date={2019},
			doi={10.1093/imrn/rnz101}
		}
		\bib{DilCohHypOdd}{article}{			
			author={Di Lorenzo, A.},
			title={Cohomological invariants of the stack of hyperelliptic curves of odd genus},	
			journal={Transformation groups},
			date={2020},
			doi={10.1007/s00031-020-09598-w}
		}
        \bib{EG97}{article}{
            author={Edidin, D.},
            author={Graham, W.},
            title={Characteristic classes in the Chow ring},
            journal={J. Algebraic Geom.},
            vol={6},
            date={1997},
            number={3},
        }
		\bib{EG}{article}{
			author={Edidin, D.},
			author={Graham, W.},
			title={Equivariant intersection theory},
			journal={Invent. Math.},
			volume={131},
			date={1998},
			number={3},
		}
%
        \bib{FriViv}{article}{
        author = {Fringuelli, R.},
        author = {Viviani, F.},
        title = {On the {P}icard group scheme of the moduli stack of stable pointed curves},
        journal = {available at ar{X}iv:2005.06920 [math.AG]},
        }
		\bib{GMS}{collection}{
			author={Garibaldi, S.},
			author={Merkurjev, A.},
			author={Serre, J.-P.},
			title={Cohomological invariants in Galois cohomology},
			series={University Lecture Series},
			volume={28},
			publisher={American Mathematical Society, Providence, RI},
			date={2003},
		}
		\bib{Guil}{article}{			
			author={Guillot, P.},
			title={Geometric methods for cohomological invariants},			
			journal={Doc. Math.},
			volume={12},			
			date={2007},
		}
		\bib{GV}{article}{
			author={Gorchinskiy, S.},
			author={Viviani, F.},
			title={Picard group of moduli of hyperelliptic curves},
			journal={Math. Z.},
			volume={258},
			date={2008},
			number={2},
		}
	    \bib{HM}{article}{
           author={Harris, J.},
           author={Mumford, D.},
           title={On the Kodaira dimension of the moduli space of curves},
           note={With an appendix by William Fulton},
           journal={Invent. Math.},
           volume={67},
           date={1982},
        }
		\bib{Lars}{article}{
		author={Larson, E.},
		title={The integral {C}how ring of $\overline{M}_2$},
		journal={available at ar{X}iv:190408081 [math.AG]},
		}
%
%
		\bib{KL}{article}{
			author={Kleiman, S. L.},
			author={L\o nsted, K.},
			title={Basics on families of hyperelliptic curves},
			journal={Compositio Math.},
			volume={38},
			date={1979},
			number={1},
		}
		\bib{MilGrp}{book}{
		title = {Algebraic groups. The theory of group schemes of finite type over a field},
        author = {Milne, J.S.},
        series = {Cambridge Studies in Advanced Mathematics},
        publisher = {Cambridge University Press},
        year = {2017},
        doi = {doi:10.1017/9781316711736},
        }
		\bib{MV}{article}{
			author={Molina, L.A.},
			author={Vistoli, A.},
			title={On the {C}how ring of classifying spaces of classical groups},
			journal={Rend. Sem. Mat. Univ. Padova},
			volume={116},
			date={2006},
		}
		\bib{PirCohHypEven}{article}{
			author={Pirisi, R.},
			title={Cohomological invariants of hyperelliptic curves of even genus},
			journal={Algebr. Geom.},
			volume={4},
			date={2017},
			number={4},}
		\bib{PirAlgStack}{article}{			
			author={Pirisi, R.},
			title={Cohomological invariants of algebraic stacks},			
			journal={Trans. Amer. Math. Soc.},
			volume={370},			
			date={2018},			
			number={3}	}
		\bib{PirCohHypThree}{article}{
			author={Pirisi, R.},
			title={Cohomological invariants of genus three hyperelliptic curves},			
			journal={Doc. Math.},
			volume={23},
			date={2018}}
		\bib{PTT}{article}{
            author={Poma, F.},
            author={Talpo, M.},
            author={Tonini, F.},
            title={Stacks of uniform cyclic covers of curves and their {P}icard groups.},
            journal={Algebraic Geometry},
            volume={2},
            number={1}, 
            date={2015}}
		\bib{Rost}{article}{			
			author={Rost, M.},
			title={Chow groups with coefficients},			
			journal={Doc. Math.},
			volume={1},			
			date={1996},			
			number={16}	}
        \bib{Ser}{collection}{
        author={Serre, J.-P.},
        title={Espaces fibrés algébriques},
        series={Séminaire C. Chevalley},
		volume={exposé 1},
		publisher={Secrétariat mathématique, 11 rue Pierre Curie, Paris},
		date={1958},
        }
         \bib{Tot}{article}{
        author = {Totaro, B.},
        title = {The Chow Ring of a Classifying Space},
        journal={Algebraic K-Theory},
        booktitle = {Proc. Symposia in Pure Math. 67},
        year = {1999},
        pages = {249--281},
        }
        \bib{Yam}{article}{
		    author={Yamaki, K.},
		    title={{C}ornalba-{H}arris equality for semistable hyperelliptic curves in positive characteristic},
		    journal={Asian J. Math.},
		    volume={8},
		    number={3},
		    pages={409-426},
		    date={2004},
		}
		\bib{Wit}{article}{
			author={Witt, E.},
			title={Theorie der quadratischen Formen in beliebigen K\"orpern},
			language={German},
			journal={J. Reine Angew. Math.},
			volume={176},
			date={1937},
		
		}

	\end{biblist}
\end{bibdiv}
\end{document}